\documentclass[a4paper]{article}
\usepackage{preamble}
\usepackage{longtable} 
\usepackage{booktabs}
\usepackage{authblk}
\graphicspath{ {./} }

\usetikzlibrary{decorations.markings}

\pgfmathdeclarefunction{atanh}{1}{\pgfmathparse{0.5*ln((1+#1)/(1-#1))}}

\title{Riemannian Gradient Descent for Gaussian Mixture Models with unknown diagonal covariances}

\bgroup

\author[1]{Romane Giard}
\author[1,2]{, Yohann De Castro}
\author[1]{, Roland Denis}
\author[1]{ and Clément Marteau}

{
\affil[1]{{Centrale Lyon, INSA Lyon, Université Lyon 1, Université Jean Monnet, CNRS, ICJ UMR5208, 69130 Ecully, France.
}}
\affil[2]{Institut Universitaire de France (IUF)}
}
\date{version as of \today}
\egroup

\begin{document}

\maketitle

\begin{abstract}
This paper investigates the numerical resolution of the Beurling-LASSO (BLASSO), a convex optimization framework that promotes sparsity in the space of measures. We consider its application to the estimation of Gaussian mixture models (GMMs) with an unknown number of components and unknown diagonal covariance matrices. Our approach combines the Conic Particle Gradient Descent (CPGD) principle with Riemannian gradient descent, to account for the underlying Fisher-Rao geometry of Gaussian distributions.
Our contributions are twofold. First, we provide theoretical guarantees for the convergence of our algorithm. In particular, we establish exponential local convergence under a non-degeneracy condition on the solution and relate this assumption to a separation condition on the underlying statistical target. Second, we address practical implementation aspects of CPGD and present numerical experiments illustrating its performance. On the test cases considered, these experiments suggest that CPGD is more robust to overspecification of the number of components than the EM algorithm. We also investigate the impact of component separation on recovery accuracy.
\end{abstract}
\medskip

\noindent Corresponding author: Romane Giard\\
\noindent E-mail address: \texttt{romane.giard@ec-lyon.fr}

\section{Introduction}
Our aim in this paper is to investigate the numerical and theoretical properties of a new algorithm designed to address a specific instance of the BLASSO. This first section recalls the motivation underlying this optimization problem and sheds light on the setting considered.

\paragraph{The Beurling-LASSO}
Let $\L$ be an Hilbert space with associated norm $\norm{\dotp}_\L$. We consider an observation $y\in\L$ that we seek to decompose or approximate as a nonnegative linear combination of $p\in \mathbb{N}^*$ parameterized signals:
\begin{equation}
\sum_{j=1}^p \omega_j f_{x_j} \,,
\label{eq:sparseparam}
\end{equation}
where for any $j\in \lbrace 1,\dots, p \rbrace$, the amplitude $\omega_j$ is nonnegative, and the location $x_j$ is a real vector belonging to a compact set $\X$, indexing $f_x \in \L$. 
Our goal is to recover a sparse representation, in the sense that only a small number of weights $\omega_j$ are nonzero. This question can be naturally reformulated as the optimization problem 
\begin{equation}
\tag{$\mathcal{P}'$}
\label{eq:problemP'}
\underset{ (\omega_j)_{j=1}^p \subset \R_+, (x_j)_{j=1}^p \subset \X}{\arg\min} \; \frac{1}{2} \norm{y-\sum_{j=1}^p \omega_j f_{x_j}}_\L^2 \,.
\end{equation}
However, we stress that \eqref{eq:problemP'} is highly nonconvex due to the joint optimization over amplitudes and locations. Moreover, nothing guarantees that the corresponding solution will be sparse.

To overcome these difficulties, the Beurling–LASSO (BLASSO), introduced by \citep{decastro_2012} and further studied in, \eg, \citep{blasso_duval_peyre, off-the-grid_cs}, lifts the problem to the space of measures via the embedding
\[(\omega_j)_{j=1}^p, (x_j)_{j=1}^p \mapsto \mu= \sum_{j=1}^p \omega_j \delta_{x_j} \,.\]
Defining the linear operator $\Psi: \mu \mapsto \int_\X f_x \, \d\mu(x)$, the BLASSO formulation then reads as
\begin{equation}
\tag{$\mathcal{P}_\kappa'$}
\label{eq:problemPkappa'}
\underset{\mu \in \M(\X)^+}{\arg\min} \frac{1}{2} \norm{y-\Psi\mu}_\L^2 + \kappa \norm{\mu}_{\rm TV} \,,
\end{equation}
where $\M(\X)^+$ denotes the space of nonnegative Radon measures on $\X$. The problem \eqref{eq:problemPkappa'} includes a penalty term expressed in the total variation norm and controlled by the regularization parameter $\kappa>0$. The total variation norm plays a role analogous to the $\ell^1$-norm in finite dimensions and promotes sparsity of the solution (see, \eg, \citep{candes_fernandez_granda_2014}).

The formulation \eqref{eq:problemPkappa'} is convex over the space of measures and does not require specifying the number of components $p$ in advance. As such, it has been the subject of several investigations, focusing either on the statistical performance of the solution $\mu^\star$ (\wrt to a given ground truth $\mu^0$) or on algorithmic aspects of the underlying optimization problem. Nevertheless, almost all of these contributions have been developed in the setting where the kernel $(x,x') \mapsto K(x,x')=\innerprod{\Psi\delta_x}{\Psi\delta_{x'}}_\L$ induced by the feature map $\Psi$ is translation-invariant (see Section \ref{section:optimization_problem} for further details). In this work, we extend these investigations to a specific kernel that is not translation-invariant. Our motivation stems from the classical Gaussian mixture estimation problem. This extension raises interesting new challenges, both from analytical and algorithmic perspectives.

\paragraph{Gaussian mixtures and non-translation-invariant kernels} The particular case of Gaussian mixture models (GMMs) provides an interesting example in which the kernel is not translation-invariant. This situation arises when the covariance structures are unknown. In the GMM setting, $y$ represents an approximation of an underlying target density built from i.i.d.\ measurements $X_1,\ldots,X_n \in \R^d$. The terms $f_x$ in \eqref{eq:sparseparam} then correspond to Gaussian densities. The parameter $x$ encodes both the mean and the diagonal covariance matrix.

Classically, GMM estimation is performed using the Expectation–Maximization (EM) algorithm \citep{mclachlan_em}, which maximizes the likelihood of the data. However, this optimization problem is inherently nonconvex and may suffer from local minima and sensitivity to initialization. In contrast, the BLASSO provides a convex alternative, offering a principled framework for sparse mixture recovery together with theoretical guarantees. The properties of the corresponding estimator have been investigated in \citep{article_giard_decastro_marteau}.

From a methodological perspective, previous implementations in the BLASSO literature (see, \eg, \citep{supermix, fastpart}) have focused on translation-invariant kernels. In the GMM framework, such a setting corresponds to the case where the covariance matrices associated with all components are equal (and known in advance). Imposing a known common covariance structure is quite restrictive and unrealistic from a practical standpoint. The framework developed in this contribution allows us to relax these constraints within the BLASSO methodology. In particular, we consider a specific operator $\Psi$ that allows us to handle Gaussian components with unknown (diagonal) covariance structures. In this setting, we present an implementable algorithm supported by a theoretical convergence analysis, together with competitive numerical results.

\paragraph{Conic Particle Gradient Descent}
To address the BLASSO problem \eqref{eq:problemPkappa'}, we develop and investigate, in Section \ref{section:optimization_procedure}, a new instance of the Conic Particle Gradient Descent (CPGD) algorithm introduced by \citep{cpgd_chizat}. 

The CPGD approach relies on a discretization of the space $\M(\X)^+$. Since this space cannot be directly parameterized, the optimization is carried out over discrete measures of the form $\sum_{j=1}^p \omega_j \delta_{x_j}$. The algorithm then updates the weights $\omega_j$ and positions $x_j$ through gradient-based iterations. To enforce the nonnegativity constraint on the weights, a conic retraction step is applied.
In our setting, particular attention is paid to the update of the parameters $x_j$. We exploit the underlying geometry induced by the Gaussian structure and consider updates based on the Fisher–Rao metric \citep{amari_information_book}, leading to Riemannian gradient descent and natural gradient descent schemes.

\paragraph{Related works}
Conic Particle Gradient Descent belongs to a broader lineage of overparameterized particle methods for optimization over the space of measures. \citep{chizat_bach_2018} analyze the Wasserstein gradient flow that arises as the mean-field limit of gradient descent on the weights and positions of an overparameterized model. Under a homogeneity assumption and a separation property of the initialization that can only be attained in the infinite-width limit, any limit point of the flow is a global minimizer. This yields consistency-type guarantees for the continuous-time flow, without providing a convergence rate for a finite step size. The conic version studied by \citep{cpgd_chizat} couples a multiplicative (mirror) update of the particle weights with an additive update of their positions; its mean-field limit is the gradient flow of the objective with respect to the Wasserstein-Fisher-Rao, or Hellinger-Kantorovich, metric. This cone geometry was introduced and developed independently by three groups \citep{savare_hellinger_kantorovich,liero_mielke_savare_inventiones,chizat_hk1,chizat_hk2,kondratyev_hk}. The Hellinger-Kantorovich distance is characterized through a lifting to the metric cone over the base space, and interpolates between pure transport (Wasserstein) and pure correlation (Hellinger/Fisher-Rao). \citep{gallouet_monsaingeon_jko} realize the corresponding gradient flows through a JKO (Jordan-Kinderlehrer-Otto) splitting scheme that alternates between a Wasserstein transport substep and a Fisher–Rao correlation substep.

The theory available for CPGD is, however, sharply divided between the flow and the discrete scheme. At the continuous-time level, \citep{cpgd_chizat} establishes asymptotic global convergence under a full-support initialization, but without a rate. For the iterates, a sharpness (Polyak-Lojasiewicz) inequality valid within a sublevel set yields only \emph{local} exponential convergence; global convergence of the discrete descent additionally requires an initialization that densely samples the parameter space, which forces the number of particles to grow exponentially with the dimension~$d$, together with a small position-to-weight step-size ratio~$\frac{\beta}{\alpha}$ that can make the guaranteed local rate slow. A faster $O\left(\frac{\ln k}{k}\right)$ rate is available only for the pure-mirror update ($\beta=0$, \ie frozen positions) initialized from a smooth full-support density, whereas the general descent rate is $O\left(\frac{\ln k}{\sqrt{k}}\right)$. Stochastic and birth-death variants address scalability and this initialization burden: \citep{fastpart} combine CPGD with stochastic gradient descent and random features, proving bounded total-variation norms along the trajectory and convergence to stationary points at rate $O\left(\frac{\ln k}{\sqrt{k}}\right)$; \citep{decastro2026fsp} add a spawn-and-prune (birth-death) process, in the spirit of the birth-death dynamics of \citep{rotskoff_birth_death}, a non-local reweighting term that keeps the mean-field flow mass-conserving, and thereby obtain global convergence without an exponentially large initialization.

A widely used grid-based competitor is the Sliding Frank-Wolfe algorithm \citep{sliding_frank_wolfe_denoyelle}, which interleaves conditional-gradient steps that add a spike at the global maximizer of the dual certificate with a non-convex ``sliding'' step jointly re-optimizing all amplitudes and positions. This algorithm terminates in finitely many iterations under uniqueness of the BLASSO solution together with a non-degeneracy hypothesis on its dual certificate. Both ingredients are costly in our setting: the per-iteration global maximization is typically performed by a grid search. The corresponding cost grows with the dimension of the parameter domain $\X \subset \R^d \times [u_{\min},u_{\max}]^d$, whose points $x=(t,u)$ encode both means and diagonal scales, while the sliding subproblem is non-convex. Conic particle gradient descent instead moves particles continuously and avoids any per-iteration grid search. Such moving-particle dynamics also deliberately steer clear of the lazy, or tangent-kernel, regime \citep{chizat_lazy}, in which a large output scaling keeps the parameters frozen near their initialization so that no features are learned. Relatedly, high-dimensional analyses of the effective dynamics of stochastic gradient descent characterize when fixed-step trajectories converge to suboptimal solutions from a random initialization \citep{benarous_sgd_highdim}.

Against this backdrop, the non-translation-invariant kernel associated with unknown diagonal covariances induces a Fisher-Rao location-scale geometry into the position updates, making the dynamics Riemannian. In this setting, we establish local exponential convergence with explicit constants for the joint weight-and-position dynamics. We further connect the underlying non-degeneracy condition to an interpretable separation condition on the statistical target through a dual-certificate and Fisher-metric analysis in the spirit of off-the-grid super-resolution \citep{blasso_duval_peyre, off-the-grid_cs}.

\paragraph{Outline of the paper and main results} Our contribution begins in Section \ref{section:optimization_problem}, where we provide details on the framework under consideration. We first introduce a generic algorithm that encompasses several possible strategies.

Section \ref{section:theoretical_analysis} provides theoretical guarantees for the convergence of the CPGD algorithm. Building on the works of \citep{cpgd_chizat,fastpart}, we first establish convergence of the weights $\omega_j$, provided that the initialization assigns sufficient mass near the BLASSO solution. We then establish exponential local convergence of CPGD under a non-degeneracy condition on the solution, relying on the convergence of the position parameters $x_j$. We further connect this non-degeneracy assumption—which depends on the (unknown) solution—to a more interpretable separation condition on the underlying statistical target. This connection relies on the analysis of dual certificates and their behavior, following ideas developed in \citep{blasso_duval_peyre, off-the-grid_cs, article_giard_decastro_marteau}.

In Section \ref{section:numerical_experiments}, we turn to practical aspects of the method. We make explicit the relationship between GMMs and the BLASSO problem. We then present numerical experiments illustrating the performance of the algorithm in this setting. We implement CPGD using natural gradient descent to account for the underlying geometry of the problem. Our experiments highlight the robustness of CPGD to misspecification of the number of components, in contrast to the EM algorithm. We also investigate the influence of component separation and sample size on recovery accuracy: while larger sample sizes improve performance, insufficient separation between components significantly degrades recovery, and increasing the sample size in this regime brings only limited gains.

The code to reproduce the numerical illustrations of this article can be found online at \citep{repo_zenodo_cpgd}. All proofs and technical details are provided in the appendix to this paper.

\section{Optimization problem}
\label{section:optimization_problem}

\subsection{Framework}\label{section:framework}

\paragraph{Functional framework}
We introduce the space $\L$ involved in the data fidelity term in \eqref{eq:problemPkappa'}. Let $\tau \in (0, u_{\min}]$ (playing the role of a smoothing parameter, see Section \ref{section:numerical_experiments}). We consider the Hilbert space $\L$ as the RKHS associated with the Gaussian kernel 
\[
\lambda: z \in \R^d \mapsto \frac{e^{-\frac{\norm{z}_2^2}{2\tau^2}}}{(2 \pi \tau^2)^{d/2}} \,.
\]
In particular, by the Fourier characterization of translation-invariant RKHS \citep[Chapter 4]{steinwart}:
\[
\L \coloneq \left\lbrace f: \R^d \rightarrow \R\; : \; f \in L^2(\R^d) \text { s.t.\ }\norm{f}_{\L}^2=\frac{1}{(2\pi)^d}\int_{\R^d} \frac{|\F{f}(\xi)|^2}{\F{\lambda}(\xi)} \d \xi<+\infty\right\rbrace\,,
\]
with dot product
\begin{equation}
\label{eq:dot_product_L}
 \forall f, g \in \L, \quad\innerprod{f}{g}_{\L}=\frac{1}{(2\pi)^d}\int_{\R^d} \frac{\overline{\F{f}(\xi)} \F{g}(\xi)}{\F{\lambda}(\xi)} \d \xi\,,
\end{equation}
where for the Fourier transform we adopt the convention $\F{f}(\xi)=\int f(z)e^{-i\innerprod{z}{\xi}}\, \d z$.

We denote by $x=(t_1,\ldots,t_d,u_1,\ldots,u_d)$ any element of $\R^d \times [u_{\min},+\infty)^d$. We write $t=(t_k)_{k=1}^d$ and $u=(u_k)_{k=1}^d$. In the specific Gaussian mixture setting, $t \in \R^d$ (resp.\ $u\in [u_{\min},+\infty)^d$) denotes the mean (resp.\ the vector of componentwise standard deviations, \ie the square root of the diagonal of the covariance matrix) of a given component.
We define the linear operator $\Psi:\M(\R^d \times [u_{\min},+\infty)^d)^+\mapsto \L$ by \[\Psi \mu: z=(z_1,\ldots,z_d) \in \R^d \mapsto \int_{\R^d \times [u_{\min},+\infty)^d} \prod_{k=1}^d \frac{(2u_k^2+\tau^2)^{1/4}}{(2\pi)^{1/4}(u_k^2+\tau^2)^{1/2}} e^{-\frac{(z_k-t_k)^2}{2(u_k^2+\tau^2)}} \,\d \mu(x)\,.\] This operator maps a measure $\mu$ to a function in $\L$, obtained as a superposition of Gaussian-shaped functions. Each point $x=(t,u)$ indexes a function $\Psi\delta_x$. Note that these functions are not probability densities, but rather normalized features adapted to the RKHS structure (\ie $\norm{\Psi\delta_x}_\L = 1$ for any $x\in \X$).
We will make explicit the relationship between this setting and the GMM estimation problem in Section \ref{section:statistical_motivation}. We refer also to \citep{article_giard_decastro_marteau} for more details and related properties. In particular, the kernel associated with $\Psi$ naturally arises as a scalar product between two Gaussian components, while the lower bound $u_{\min}$ on the covariance matrices appears to be a standard constraint to avoid singular models. 

\paragraph{Optimization problem} Let $\X$ be a compact of $\R^d \times [u_{\min},u_{\max}]^d$. We denote $\M(\X)^+$ the space of nonnegative Radon measures on $\X$.
Let $\kappa>0$ (regularization parameter). We aim to approach $\mu_\omega^\star$ a solution to the BLASSO problem \eqref{eq:problemPkappa'} that can be rewritten as
\begin{equation}\label{eq:pb_BLASSO_gaussian_kernel_norm}
\tag{$\mathcal{P}_\kappa$}
\underset{\mu \in \M(\X)^+}{\arg \min} \: J_W(\mu) \quad \text{where} \quad  J_W(\mu) \coloneq \frac{1}{2}\norm{\Psi\mu - y}_\L^2 + \kappa\norm{\mu}_{\rm TV}\,,
\end{equation}
with $y\in \L$ and $\kappa>0$.

This problem can be interpreted as an infinite-dimensional sparse recovery problem over the space of measures. The first summand of $J_W$ enforces fidelity to the data $y$, while the total variation norm $\norm{\mu}_{\rm TV}$ promotes sparsity, favoring measures that are concentrated on a small number of points.

The operator $\Psi$ plays the role of a forward model: solving \eqref{eq:pb_BLASSO_gaussian_kernel_norm} amounts to recovering a sparse representation of $y$ in terms of the parameterized family of functions $\Psi\delta_x$. The underlying statistical motivation for this problem is presented in Section \ref{section:statistical_motivation}.

This problem has a solution (not necessarily unique), see for instance \citep[Proposition B.1]{article_giard_decastro_marteau}. In general, solutions are not necessarily discrete.

\paragraph{Kernel and geometry}
Problem \eqref{eq:pb_BLASSO_gaussian_kernel_norm} is associated with the feature map $\Psi$.
We consider the corresponding (normalized) kernel: for $x,x' \in \R^d \times [u_{\min},+\infty)^d$,
\begin{equation}
   K_{\rm norm}(x,x') = \innerprod{\Psi\delta_x}{\Psi\delta_{x'}}_{\L} =\prod_{k=1}^d \frac{(2u_k^2+\tau^2)^{1/4} (2{u_k'}^2+\tau^2)^{1/4}}{(u_k^2+{u_k'}^2+\tau^2)^{1/2}}e^{-\frac{(t_k-t_k')^2}{2(u_k^2+{u_k'}^2+\tau^2)}}\,. \label{eq:kernel_K_norm}
\end{equation}

Interestingly, this kernel induces a specific geometry over $x\in \R^d \times [u_{\min},+\infty)^d$. We will take advantage of this underlying geometry when designing our algorithm (see Section \ref{section:optimization_procedure}). Formally, the space $\R^d \times [u_{\min},+\infty)^d$ can be endowed with the Fisher–Rao metric associated with $K_{\rm norm}$ \citep{article_giard_decastro_marteau}. For $x\in \R^d \times [u_{\min},+\infty)^d$, the corresponding metric tensor is defined by
\begin{equation}
\label{eq:def_fisher_rao_metric}
 \mathfrak{g}_x\coloneq\nabla_1\nabla_2K_{\rm norm}(x,x)=\diag \left(\frac{1}{2u_1^2+\tau^2},\ldots, \frac{1}{2u_d^2+\tau^2},\frac{2u_1^2}{(2u_1^2+\tau^2)^2} ,\ldots, \frac{2u_d^2}{(2u_d^2+\tau^2)^2} \right)\,.
\end{equation}
This metric naturally induces norm and scalar product, defined by
\[\innerprod{v}{v'}_x= v^T \mathfrak{g}_x v' \quad \text{and} \quad \norm{v}_x = \sqrt{v^T \mathfrak{g}_x v} \qquad \forall \: v,v' \in \R^{2d}\,, \;x \in \R^d \times [u_{\min}, +\infty)^d \,.\]

We denote by $\gamma_{x\to x'}$ the Fisher-Rao geodesic parameterized between $0$ and $1$ connecting $x=\gamma_{x\to x'}(0)$ to $x'=\gamma_{x\to x'}(1)$. In particular, it minimizes $\int_0^1 \norm{\dot \gamma(y)}_{\gamma(y)}^2 \, \d y$ among smooth functions from $[0,1]$ to $\R^d \times [u_{\min},+\infty)^d$ such that $\gamma(0)=x, \gamma(1)=x'$. 
The corresponding distance between any $x,x'\in \R^d \times [u_{\min},+\infty)^d$ can then be defined as  
\[
\mathfrak{d}_\mathfrak{g}(x,x')=\norm{\dot\gamma_{x\to x'}(y)}_{\gamma_{x\to x'}(y)} \quad \text{for any} \quad  y\in[0,1].
\]
Finally, we can define the associated Riemannian gradient, which will be of particular interest when designing our algorithm.

\begin{definition}[Riemannian gradient]\label{def:riemannian_gradient}
Let $\psi \in \C^2(\R^d \times [u_{\min}, +\infty)^d)$. Let $x \in \R^d \times [u_{\min}, +\infty)^d$.
We define \[\nabla^{\mathfrak{g}} \psi(x)=\mathfrak{g}_{x}^{-1} \nabla \psi(x)\,.\]
\end{definition}
Additional details on the metric $\mathfrak{g}_x$ can be found in \citep[Appendix H]{article_giard_decastro_marteau}.

\subsection{Optimization procedure} \label{section:optimization_procedure}
We have now all the ingredients to design our algorithm based on the CPGD principle. In this context, the Fréchet derivative of the target objective function plays an important role. We first recall its main properties and discuss how we can take advantage of this quantity to address our problem. 

\paragraph{Fréchet derivative} Let $x\in \R^d \times [u_{\min},+\infty)^d$ and $\mu \in \M(\R^d \times [u_{\min},+\infty)^d)^+$. 
The Fréchet derivative $J'_\mu$ of $J_W$  verifies 
\begin{equation}\label{eq:dev_J_mu_plus_sigma_minus_J_mu}
    J_W(\mu + \sigma) - J_W(\mu) = \int_{\R^d \times [u_{\min},+\infty)^d} J_{\mu}' \, \d \sigma + \frac{1}{2} \norm{\Psi \sigma}_\L^2 \,.
\end{equation}
for any $\sigma \in \M(\R^d \times [u_{\min},+\infty)^d)$ such that $\mu + \sigma \in \M(\R^d \times [u_{\min},+\infty)^d)^+$. In particular, there exists an explicit expression for this derivative, namely
\begin{equation}\label{eq:def_Jp}
    J_{\mu}'(x)=-\innerprod{\Psi \delta_x}{y}_\L+\innerprod{\Psi\delta_x}{\Psi \mu}_\L+\kappa \,.
\end{equation}

The Fréchet derivative is strongly connected to our optimization problem. Indeed, as proved in \citep[Proposition 3.1]{cpgd_chizat}, the KKT conditions associated with the problem \eqref{eq:pb_BLASSO_gaussian_kernel_norm} impose that for $\mu_\omega^\star$ a solution to \eqref{eq:pb_BLASSO_gaussian_kernel_norm}, $J_{\mu_\omega^\star}' \geq 0$ on $\X$ and $J_{\mu_\omega^\star}'=0$ on $\supp(\mu_\omega^\star)$.

\paragraph{Discretization} To numerically solve \eqref{eq:pb_BLASSO_gaussian_kernel_norm}, we must discretize the space $\M(\X)^+$, as it cannot be directly parameterized. We therefore restrict our attention to discrete measures supported on $p$ particles. 

The gradients of $J_W$ with respect to the weights and the positions of these particles are linked to $J'$.
This relationship is formalized in \citep[Section 2.1]{cpgd_chizat}. In particular, for any discrete measure $\mu= \sum_{j=1}^p \omega_j \delta_{x_j}$, we can check that 
\[
\partial_{\omega_j} J_W(\mu) = J_{\mu}'(x_j) \quad \text{and} \quad \nabla_{x_j} J_W(\mu) = \omega_j \nabla_{x_j} J_{\mu}'(x_j) \quad \forall j\in \lbrace 1,\dots, p \rbrace\,.
\]
Starting with an initialization $\mu_\omega^1=\sum_{j=1}^p \omega_j^k \delta_{x_j^k}$ with $p \in \mathbb{N}^*$ particles, the general CPGD principle consists in designing iteratively a sequence of measures $(\mu_\omega^k)$ where at each step $k>1$, weights and positions of the current measure  $\mu_\omega^k=\sum_{j=1}^p \omega_j^k \delta_{x_j^k}$ are (roughly speaking) updated according to a gradient descent. Since our target is a positive measure, this gradient descent includes a conic retraction to constrain the nonnegativity of the weights. In the same time, the updates on the positions will take advantage of the underlying geometry of the considered problem. 

\paragraph{Weight update}
Following \citep{cpgd_chizat}, we use a conic retraction for the weight updates, with step size $\alpha\geq 0$. In particular, given $\mu_\omega^k$ for $k\geq 1$, we write 
\begin{equation}
\label{eq:update_poids}
\omega_j^{k+1}=\omega_j^ke^{-\alpha J_{\mu_\omega^k}'(x_j^k)} \quad \forall j\in \lbrace 1,\dots, p \rbrace\,.
\end{equation}
Contrary to a direct gradient descent, this update ensures that the weights remain positive throughout the iterations. Note that $\omega_j^{k+1}$ can be written as the solution of
\[
\underset{\omega \in \R^+}{\arg \min} \left[\: J_{\mu_\omega^k}'(x_j^k) (\omega - \omega_j^k) + \frac{1}{\alpha} \left(\omega_j^k - \omega + \omega \ln\left(\frac{\omega}{\omega_j^k}\right) \right)\right]\,,
\]
where the second term is the Bregman divergence of the entropy, leading to an entropic mirror descent step for the weights. We stress that the update \eqref{eq:update_poids} is in accordance with the KKT condition. Particles $x_j$ associated with a negative Fréchet derivative are promoted (\ie $\omega_j^{k+1}>\omega_j^k$) to secure mass guidance in regions where we observe a default in the KKT condition, while particles for which $J_{\mu_\omega^k}'(x_j^k)>0$ are scaled down.

\paragraph{Position update} The underlying Riemannian geometry associated with the statistical problem (see Section \ref{section:framework} above) gives different strategies for the position update. Formally, we use a custom operator $\mathcal{T}$ and a step size $\beta \geq 0$: $\mathcal{T}_{\beta}(x_j^k,J_{\mu_\omega^k}')$ denotes an update on the location-scale parameter of the jth particle, with step size $\beta$, depending on $J_{\mu_\omega^k}'$. 
We can think of three different approaches for this update:
\begin{align}
    \mathcal{T}_{\beta}(x_j^k,J_{\mu_\omega^k}')&= x_j^k - \beta \nabla_x J_{\mu_\omega^k}'(x_j^k) \,, \tag{EGD} \label{eq:EGD} \\
    \mathcal{T}_{\beta}(x_j^k,J_{\mu_\omega^k}')&=x_j^k - \beta\nabla^{\mathfrak{g}} J_{\mu_\omega^k}'(x_j^k)\,, \tag{NGD}\label{eq:NGD} \\
    \mathcal{T}_{\beta}(x_j^k,J_{\mu_\omega^k}') &= \exp_{x_j^k}(- \beta\nabla^{\mathfrak{g}} J_{\mu_\omega^k}'(x_j^k))\,. \tag{RGD}\label{eq:RGD}
\end{align}
These updates (illustrated in Figure \ref{fig:EGD_NGD_RGD}) correspond respectively to Euclidean gradient descent (EGD), natural gradient descent (NGD) and Riemannian gradient descent (RGD). In the expression \eqref{eq:RGD} above, $\exp_{\dotp}$ denotes the exponential map, \ie 
$\exp_{x_0}(v) = \gamma(1)$ where $\gamma$ is a Fisher-Rao geodesic with starting point $\gamma(0)=x_0$ and initial direction $\dot \gamma(0)=v$. In dimension $d=1$, the Fisher-Rao geodesics are either straight lines parallel to $\{t=0\}$ or semicircles centered on $\{u=0\}$ \citep[Appendix H.2]{article_giard_decastro_marteau}.

\begin{figure}[ht]
\centering
\begin{tikzpicture}[scale=1.2]

\draw[->] (-0.3,0) -- (3,0) node[right] {$t$};
\draw[->] (0,0) -- (0,3) node[above] {$u$};

\def\t{0.5}
\def\u{1}

\def\tau{0.2}
\pgfmathsetmacro{\TwoUSquare}{2*\u*\u}
\pgfmathsetmacro{\TauSquare}{\tau*\tau}
\pgfmathsetmacro{\TwoUSquarePlusTauSquare}{\TwoUSquare+\TauSquare}
\pgfmathsetmacro{\dt}{0.5*\TwoUSquarePlusTauSquare}
\pgfmathsetmacro{\du}{1*\TwoUSquarePlusTauSquare*\TwoUSquarePlusTauSquare/(2*\TwoUSquare)}
\draw[->, blue, thick] (\t,\u) -- ++(\dt,\du);
\node[blue] at (1.9,2.2) {\small NGD};

\pgfmathsetmacro{\lval}{sqrt(\dt*\dt/\TwoUSquarePlusTauSquare + \TwoUSquare*\du*\du/(\TwoUSquarePlusTauSquare*\TwoUSquarePlusTauSquare))}
\pgfmathsetmacro{\cOne}{\dt/(\TwoUSquarePlusTauSquare*\lval)}
\pgfmathsetmacro{\cTwo}{atanh(-\du*sqrt(2)*\u/(\TwoUSquarePlusTauSquare*\lval))}
\pgfmathsetmacro{\cThree}{\t + (\u*\du)/\dt}

\draw[green!60!black,thick,postaction={decorate,decoration={markings,mark=at position 1 with {\arrow{>}}}},domain=0:\lval,samples=100]
plot ({\cThree + (sqrt(2)*tanh(\cTwo + sqrt(2)*\x))/(2*\cOne)},{sqrt(-(\tau*\tau)/2+ 1/(2*\cOne^2*cosh(\cTwo + sqrt(2)*\x)^2))});

\node[green!60!black] at (2.5,1) {\small RGD};

\draw[->, red, thick] (\t,\u) -- ++(0.5,1);
\node[red] at (0.9,2.2) {\small EGD};

\fill (\t,\u) circle (2pt) node[below left] {$x$};
\end{tikzpicture}

\caption{Illustration of the three update strategies in dimension $d=1$. The parameters $t$ (mean) and $u$ (standard deviation) define a two-dimensional location–scale space $(t,u)$. Each arrow starts at $x$ and points to the corresponding updated position.
}
\label{fig:EGD_NGD_RGD}
\end{figure}

The updates \eqref{eq:NGD} and \eqref{eq:RGD} account for the intrinsic geometry of the space of Gaussian distributions (means and diagonal covariances). We refer to \citep[Section 4.1]{nielsen_intro_information_geometry} and \citep{amari_natural_gd}. Furthermore, the RGD update is invariant under any invertible smooth reparameterization of the parameter space.
These two schemes are closely related: natural gradient descent can be viewed as a first-order approximation of Riemannian gradient descent when the step size $\beta$ is small.

We emphasize that the existence of these different strategies is specific to our setting. In particular, when considering GMM with \textit{known} and \textit{identical} covariance structures, the Fisher-Rao metric reduces to the Euclidean one, and \eqref{eq:EGD}, \eqref{eq:NGD}, \eqref{eq:RGD} lead in this case to similar descent schemes (up to the choice of the learning rate $\beta$). This is a strong difference with previous implementations of the BLASSO problem (see, \eg, \citep{fastpart,decastro2026fsp}), all of them being restricted to translation-invariant kernels. Considering a non-translation-invariant setting generates new questions that are addressed in the following.

\begin{remark}[Ensuring well-defined updates]
We stress that these updates are well-defined under certain conditions or modifications.
We can for instance think of some projected version of \eqref{eq:EGD} or \eqref{eq:NGD}, to keep the location-scale parameters in $\R^d \times [u_{\min},+\infty)^d$, or in $\X$. We could use the Euclidean projection, or the projection \wrt to the Fisher-Rao distance.
Thinking about a projection step is particularly important for \eqref{eq:RGD}, because the exponential map is not defined everywhere (\ie $\exp_{x_0}(v)$ is defined for all $v$ such that $\norm{v}_{x_0}\leq c_{x_0}$, with $c_{x_0}>0$ depending on $x_0$, but not for all $v \in \R^{2d}$). Moreover, alternative instances can be considered using alternative optimization tools or strategies. A solution involving freezing the particles outside a certain set will be presented in Section \ref{section:exponential_convergence}, while numerical experiments displayed in Section \ref{section:numerical_experiments} use AdaGrad strategies for the calibration of the learning rates $\alpha$ and $\beta$.
\end{remark}

\paragraph{A general algorithm} Gathering the weights and position update strategies discussed above, Algorithm \ref{algo:CPGD_general_form} proposes a generic instance of the CPGD algorithm adapted to our framework. 

\begin{algorithm}[H]
\caption{CPGD}
\label{algo:CPGD_general_form}
\begin{algorithmic}[1]
\Require Step sizes $\alpha,\beta \geq 0$, number of iterates $K$, initialization $\mu_\omega^1 = \sum_{j=1}^p \omega_j^1 \delta_{x_j^1}$
\For{$k = 1,\ldots K-1$}
    \For{$j = 1,\ldots,p$}
      \State 
    $\omega_j^{k+1} \gets \omega_j^k e^{-\alpha J'_{\mu_\omega^k}(x_j^k)}$ \Comment{Weight update}
        
    \State
       $x_j^{k+1} \gets \mathcal{T}_{\beta}(x_j^k,J_{\mu_\omega^k}')$ \Comment{Location-scale update}
    \EndFor
    \State $\mu_\omega^{k+1} \gets \sum_{j=1}^p \omega_j^{k+1} \delta_{x_j^{k+1}}$
\EndFor
\end{algorithmic}
\end{algorithm}

\paragraph{Geometry and gradient descent}
The relative updates for the weights and positions are adapted from \citep{cpgd_chizat}. They rely on the cone metric, whose metric tensor at point $(\omega, x) \in \R_+^* \times \R^{2d}$, and for step sizes $\alpha,\beta>0$ is given by \[\diag\left(\frac{\alpha^{-1}}{\omega}, \beta^{-1} \omega, \ldots, \beta^{-1} \omega\right) \in \R^{(1+2d)\times (1+2d)} \,.\]
This choice induces multiplicative updates in $\omega$, while the updates in $x$ are independent of $\omega$. In addition, a mirror retraction is applied to $\omega$ in order to ensure its positivity. The retraction \eqref{eq:EGD} together with the weight update \eqref{eq:update_poids} is no more than a natural gradient descent for the cone metric and using a logarithmic reparameterization ($\tilde \omega = \ln \omega$).

On the other hand, using \eqref{eq:NGD} corresponds to the more intricate metric 
\[\diag\left(\frac{\alpha^{-1}}{\omega}, \beta^{-1} \omega \frac{1}{2u^2+\tau^2}, \ldots, \beta^{-1} \omega \frac{1}{2u^2+\tau^2},\beta^{-1} \omega \frac{2u^2}{(2u^2+\tau^2)^2}, \ldots, \beta^{-1} \omega \frac{2u^2}{(2u^2+\tau^2)^2} \right) \,,\]
which combines both the cone and Fisher-Rao metrics.

For more details on the cone metric and related properties, we refer to \citep{savare_hellinger_kantorovich,liero_mielke_savare_inventiones, chizat_hk1, chizat_hk2, kondratyev_hk}.

\section{Theoretical analysis of the algorithm}\label{section:theoretical_analysis}

This section provides a theoretical analysis of the CPGD principle in our setting. We investigate the convergence properties of Algorithm \ref{algo:CPGD_general_form} and one of its variant under generic conditions. 
\subsection{Convergence of the objective values via the weights dynamic}

We first establish a sublinear $O\left(\frac{\ln K}{\sqrt K}\right)$ rate on the objective values of the Ces\`aro average, driven by the weight dynamics only. More precisely, if $\mu_\omega^1$ is constructed on an grid dense enough in $\X$, then we can expect to recover the target measure. We revisit here \citep[Theorem 1.2]{fastpart} and part of the proof of \citep[Theorem 4.1]{cpgd_chizat} in our specific setting. At this step, we do not select a specific position update among \eqref{eq:EGD}, \eqref{eq:NGD}, \eqref{eq:RGD}, but work instead with a generic scheme under general conditions (in particular, freezing positions would work). The proof of the following proposition is displayed in Appendix \ref{section:proof_prop_weight_convergence}.

\begin{proposition} \label{prop:weight_convergence}
\underline{Choice of initialization:} Assume that $\mu_\omega^1 = \sum_{j=1}^p \omega_j^1 \delta_{x_j^1}$ with $\omega_j^1>0$ and $x_j^1\in\X$. 
Moreover, we suppose that there exists disjoint regions $\{\X_j\}_{j=1}^p \subset \X$ such that for all $j=1,\ldots,p$, $x_j^1 \in \X_j$ and $\sup_{\substack{x \in \X_j \,, \\ j=1,\ldots,p}} \mathfrak{d}_{\mathfrak{g}}(x_j^1,x) \leq r_{\mu_\omega^1,\X}$ for a radius $r_{\mu_\omega^1,\X}>0$. We write $\X_0\coloneq \X \setminus \left(\cup_{j=1}^p \X_j \right)$.
\\\underline{Choice of location-scale update:} We choose $\mathcal{T}$ and $\beta$ such that for $\mu_\omega^k = \sum_{j=1}^p \omega_j^k \delta_{x_j^k} \in \M(\X)^+$, $x_j^{k+1}=\mathcal{T}_{\beta}(x_j^k,J_{\mu_\omega^k}')\in \X$ and $\mathfrak{d}_\mathfrak{g}(x_j^k,x_j^{k+1}) \leq C_{\mathcal{T}} \beta$. 
\\\underline{Control of the Cesàro average:}
Let $K \geq 2$. Let $\kappa \leq 1$.
If $\alpha  = \frac{1}{\sqrt{K}}$ and $0 \leq \beta \leq \frac{1}{K^{3/2}C_{\mathcal{T}}}$, then
\begin{subequations}\label{eq:weight_convergence}
\begin{equation}
    J_W\left(\frac{1}{K}\sum_{k=1}^K \mu_\omega^k\right) - J_W(\mu_\omega^\star)
    \lesssim_{\norm{\mu_\omega^\star}_{\rm TV}, \norm{y}_\L, \X, \norm{\mu_\omega^1}_{\rm TV}} \frac{H_{\mu_\omega^1,\mu_\omega^\star} + 1}{\sqrt{K}} + r_{\mu_\omega^1,\X} + \mu_\omega^\star(\X_0)
\label{eq:borne1}
\end{equation}
where
\begin{equation}
   H_{\mu_\omega^1,\mu_\omega^\star} = \sum_{\substack{j\in \{1,\ldots, p\} \,, \\ \mu_\omega^\star(\X_j)>0}}\left( -\ln\left(\frac{\omega_j^1}{\mu_\omega^\star(\X_j)} \right)\mu_\omega^\star(\X_j) - \mu_\omega^\star(\X_j) \right) + \norm{\mu_\omega^1}_{\rm TV} \,. \label{eq:def_H_mu1_mu_star}
\end{equation}
\end{subequations}
\end{proposition}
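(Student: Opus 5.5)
The proof runs the mirror-descent analysis of \citep[Theorem 1.2]{fastpart} and \citep[Theorem 4.1]{cpgd_chizat}, with the Euclidean displacement of the particles replaced by the Fisher-Rao displacement. Convexity of $J_W$ handles the Cesàro average: since $J_W$ is convex, $J_W(\frac1K\sum_k\mu_\omega^k)\le \frac1K\sum_k J_W(\mu_\omega^k)$. It is therefore enough to bound $\frac1K\sum_k (J_W(\mu_\omega^k)-J_W(\mu_\omega^\star))$.

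\textbf{Step 1: reduce to a linearized sum.} By \eqref{eq:dev_J_mu_plus_sigma_minus_J_mu} with $\sigma=\mu_\omega^\star-\mu_\omega^k$ (the quadratic term is nonnegative),
\[
J_W(\mu_\omega^k)-J_W(\mu_\omega^\star)\le \int J'_{\mu_\omega^k}\,\d(\mu_\omega^k-\mu_\omega^\star).
\]

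\textbf{Step 2: build a discrete comparison measure.} Define $\hat\mu^k=\sum_j \mu_\omega^\star(\X_j)\delta_{x_j^k}$, which transports the mass of $\mu_\omega^\star$ on $\X_j$ to the current particle. Then
\[
-\int J'_{\mu_\omega^k}\,\d\mu_\omega^\star=-\int J'_{\mu_\omega^k}\,\d\hat\mu^k+E_k ,
\]
with the error $E_k$ controlled by two terms:
\begin{itemize}
\item the mass $\mu_\omega^\star(\X_0)$ times $\sup_\X|J'_{\mu_\omega^k}|$;
\item $\norm{\mu_\omega^\star}_{\rm TV}\,\mathrm{Lip}_{\mathfrak d_{\mathfrak g}}(J'_{\mu_\omega^k})\,\sup_{x\in\X_j}\mathfrak d_{\mathfrak g}(x,x_j^k)$.
\end{itemize}
The last distance is at most $r_{\mu_\omega^1,\X}+(k-1)C_{\mathcal T}\beta\le r_{\mu_\omega^1,\X}+K^{-1/2}$, by the triangle inequality and the assumption $\beta\le K^{-3/2}/C_{\mathcal T}$.

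Both $\sup|J'_\mu|$ and its Fisher-Rao Lipschitz constant must be bounded uniformly along the iterates. From \eqref{eq:def_Jp} and $\norm{\Psi\delta_x}_\L=1$,
\[
|J'_\mu|\le \norm{y}_\L+\norm{\mu}_{\rm TV}+\kappa .
\]
For the Lipschitz part, $\norm{\nabla J'_\mu}_{\mathfrak g_x^{-1}}\le (\norm{y}_\L+\norm{\mu}_{\rm TV})\sup_x\norm{\nabla_1\Psi\delta_x}$, and the latter is bounded by $1$ because $\mathfrak g_x=\nabla_1\nabla_2K_{\rm norm}(x,x)$, so the Fisher metric norm of the feature derivative is exactly unity. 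This is the one place where the Fisher-Rao geometry enters. A uniform bound on $\norm{\mu_\omega^k}_{\rm TV}$ also follows, as in \citep{fastpart}: the multiplicative update shrinks the mass whenever it exceeds roughly $\norm{y}_\L/\kappa$-type thresholds. I expect this TV bound, with constants depending on $\norm{\mu_\omega^1}_{\rm TV}$ and $\norm{y}_\L$, to be the main technical obstacle, since it must hold for every $k$ without any assumption on positions.

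\textbf{Step 3: mirror-descent telescoping.} Both $\mu_\omega^k$ and $\hat\mu^k$ sit on the same particles, so only weights matter. Write
\[
\int J'_{\mu_\omega^k}\,\d(\mu_\omega^k-\hat\mu^k)=\sum_j J'_{\mu_\omega^k}(x_j^k)\bigl(\omega_j^k-\mu_\omega^\star(\X_j)\bigr).
\]
The entropic mirror-descent inequality for \eqref{eq:update_poids} gives
\[
\alpha\sum_j g_j(\omega_j^k-a_j)\le D(a,\omega^k)-D(a,\omega^{k+1})+\alpha^2 C\sum_j\omega_j^k g_j^2 ,
\]
where $D$ is the generalized KL divergence, using $e^{-s}\le 1-s+s^2$ for bounded $s$ (valid since $\alpha|J'|\lesssim 1$). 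Summing over $k$ and telescoping yields
\[
D(a,\omega^1)/(\alpha K)+\alpha C .
\]
Here $D(a,\omega^1)$ equals $H_{\mu_\omega^1,\mu_\omega^\star}$ in \eqref{eq:def_H_mu1_mu_star}, the terms with $a_j=0$ contributing $\omega_j^1$.

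\textbf{Step 4: collect terms.} With $\alpha=K^{-1/2}$ this gives $(H+1)/\sqrt K$. Adding the errors $r_{\mu_\omega^1,\X}+K^{-1/2}+\mu_\omega^\star(\X_0)$, times the uniform constants, yields \eqref{eq:borne1}.
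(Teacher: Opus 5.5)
Your architecture is the paper's. Your $\hat\mu^k$ is exactly its auxiliary measure $\mu_\epsilon^k$ with weights $\mu_\omega^\star(\X_j)$, and the entropic telescoping produces the same $H_{\mu_\omega^1,\mu_\omega^\star}/(\alpha K)+O(\alpha)$. Your drift bound $(k-1)C_{\mathcal T}\beta\le K^{-1/2}$ is what the paper gets through the bounded-Lipschitz dual norm. Your Fisher--Rao Lipschitz constant is correct, and even sharper than the paper's $\sqrt{B_{02}}$. Since $\mathfrak g_x=\nabla_1\nabla_2K_{\rm norm}(x,x)$ is diagonal, the components of $\mathfrak g_x^{-1/2}\nabla\Psi\delta_x$ are orthonormal in $\L$. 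Hence $\norm{\nabla^{\mathfrak g}J'_\mu(x)}_x\le\norm{\Psi\mu-y}_\L$ by Bessel's inequality.

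The gap is the uniform bound on $\norm{\mu_\omega^k}_{\rm TV}$. You use it three times: for $\sup_\X|J'_{\mu_\omega^k}|$, for the Lipschitz constant, and for the remainder $\alpha^2C\sum_j\omega_j^k|J'_{\mu_\omega^k}(x_j^k)|^2$. You leave it unproven, and the mechanism you sketch does not deliver it.

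\begin{itemize}
\item Whether $\omega_j^k$ shrinks depends on the sign of $J'_{\mu_\omega^k}(x_j^k)$. That quantity sees the mass only through $K_{\rm norm}(x_j^k,\cdot)$, not through $\norm{\mu_\omega^k}_{\rm TV}$.
\item With only $K_{\rm norm}\ge 0$ and $K_{\rm norm}(x,x)=1$, you get $\omega_j^k\le\max\{\omega_j^1,\norm{y}_\L e^{\alpha\norm{y}_\L}\}$ per particle. That is a total mass linear in $p$, useless for the grid initialization with $p=\lceil K^d\rceil$.
\item Going through $\kappa\norm{\mu}_{\rm TV}\le J_W(\mu)$ would require $J_W(\mu_\omega^k)$ to be non-increasing. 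This is not available for $\alpha=K^{-1/2}$ and a generic $\mathcal T$, since no descent direction is assumed. It would also put $1/\kappa$ into the constant, whereas the statement allows dependence only on $\norm{\mu_\omega^\star}_{\rm TV},\norm{y}_\L,\X,\norm{\mu_\omega^1}_{\rm TV}$.
\end{itemize}

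The missing idea is positivity of the kernel on the compact set: $C_\X\coloneq\inf_{x,x'\in\X}K_{\rm norm}(x,x')>0$. The assumption on $\mathcal T$ keeps every particle in $\X$, so $J'_{\mu_\omega^k}\ge C_\X\norm{\mu_\omega^k}_{\rm TV}-\norm{y}_\L+\kappa$ on $\X$. Hence every weight decreases once $\norm{\mu_\omega^k}_{\rm TV}\ge(\norm{y}_\L-\kappa)/C_\X$. Below that level, $J'\ge-\norm{y}_\L+\kappa$ lets one step multiply the mass by at most $e^{\alpha\norm{y}_\L}$. Induction then gives $\norm{\mu_\omega^k}_{\rm TV}\le\max\{\norm{y}_\L e^{\alpha\norm{y}_\L}/C_\X,\norm{\mu_\omega^1}_{\rm TV}\}$, uniformly in $k$, $p$ and $\kappa$. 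With this lemma in place, the rest of your argument goes through.
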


Proposition \ref{prop:weight_convergence} describes the convergence of the algorithm. In particular, it provides a control on the performances of the Cesaro average $\frac{1}{K}\sum_{k=1}^K \mu_\omega^k$ in terms of the quantities displayed in the r.h.s.\ of \eqref{eq:borne1}. Inside this bound, the term $H_{\mu_\omega^1,\mu_\omega^\star}$ measures the divergence between the target $\mu_\omega^\star$ and the initialization. The smaller this divergence, the faster the convergence. This upper bound is completed by the mass of $\mu_\omega^\star$ outside the search space $\X_0$ and the radius $r_{\mu_\omega^1,\X}$. In particular, the grid should be thin enough to guarantee a good recovery precision. 

Note that Proposition \ref{prop:weight_convergence} only establishes convergence of the objective values, in the Cesàro sense. In particular, it does not guarantee that $\mu_\omega^k$ converges to $\mu_\omega^\star$.
When the descent property holds, \ie when $k \mapsto J_W(\mu_\omega^k)$ is decreasing (see Lemma \ref{lemma:descent_property} for sufficient conditions), the bound in Proposition \ref{prop:weight_convergence} applies directly to $J_W(\mu_\omega^k) - J_W(\mu_\omega^\star)$.

\begin{remark}[Choice of $\beta$ and limits of Proposition \ref{prop:weight_convergence}]
In Proposition \ref{prop:weight_convergence}, the step size $\beta$ can be taken equal to zero without affecting the bound on $J_W\left(\frac{1}{K}\sum_{k=1}^K \mu_\omega^k\right) - J_W(\mu_\omega^\star)$. The choice of location–scale updates is likewise largely immaterial, as it only needs to satisfy a very mild condition. In particular, one may simply use the trivial update $\mathcal{T}_\beta(x_j^k,J_{\mu_\omega^k}')=x_j^k$. 
This reflects the fact that this convergence analysis depends solely on the behavior of the weights, and not on the evolution of the location–scale parameters.
Consequently, the initialization must place sufficient mass near the support of the solution $\mu_\omega^\star$. More precisely, it requires that there exists $\X_0 \subset \X$ such that $\mu_\omega^\star(\X_0)$ is small, and for every $x\in \supp(\mu_\omega^\star) \setminus \X_0$, $\min_{j=1,\ldots,p} \mathfrak{d}_\mathfrak{g}(x,x_j^1)$ is small.
\end{remark}

\begin{remark}[Construction of the initialization] \label{remark:construct_init_grid}
The initialization $\mu_\omega^1$ can be specifically designed to control the terms $H_{\mu_\omega^1,\mu_\omega^\star}$, $r_{\mu_\omega^1,\X}$ and $\mu_\omega^\star(\X_0)$ appearing in Proposition \ref{prop:weight_convergence}.
Setting $\mu_\omega^1=\sum_{j=1}^p \frac{1}{p} \delta_{x_j^1}$ where the points $x_j^1$ form a grid and the regions $\X_j$ correspond to the associated cells, then we have $\mu_\omega^\star(\X_0)=0$ and we can expect $r_{\mu_\omega^1,\X}$ to be of order $p^{-1/(2d)}$ (omitting the dependence on the total volume of $\X$). Moreover, recalling \eqref{eq:def_H_mu1_mu_star}, with this choice of $\mu_\omega^1$,
\begin{align*}
H_{\mu_\omega^1,\mu_\omega^\star}
&\leq \max\left\{\norm{\mu_\omega^\star}_{\rm TV} \ln\left(\frac{\norm{\mu_\omega^\star}_{\rm TV}}{\min_{j} \omega_j^1}\right) \,, \, 0\right\} - \norm{\mu_\omega^\star}_{\rm TV} +\mu_\omega^\star(\X_0) + \norm{\mu_\omega^1}_{\rm TV} \,, \\
&\leq \norm{\mu_\omega^\star}_{\rm TV} \left( \max\left\{\ln(p \norm{\mu_\omega^\star}_{\rm TV})\,, \, 0\right\}-1\right)+ 1 \,.
\end{align*}
In particular, choosing $p=\ceil{K^d}$, we get $H_{\mu_\omega^1,\mu_\omega^\star} \lesssim_{\norm{\mu_\omega^\star}_{\rm TV}} d\ln(K)$ and
\[
    J_W\left(\frac{1}{K}\sum_{k=1}^K \mu_\omega^k\right) - J_W(\mu_\omega^\star)
    \lesssim_{\norm{\mu_\omega^\star}_{\rm TV}, d, \norm{y}_\L, \X} \frac{\ln(K)}{\sqrt{K}} \,.
\]
Note that with our choice of initialization, $\norm{\mu_\omega^1}_{\rm TV}=1$ does not depend on $p$.
\end{remark}

\begin{remark}[Need to work on a compact set, positivity of the kernel]
Working on a compact set ensures the existence of a solution to the BLASSO. Moreover, in the absence of prior information on $\mu_\omega^\star$, restricting the problem to a compact set is necessary in order to construct the initialization as described in Remark \ref{remark:construct_init_grid}.

In addition, we used in the proof the explicit form of $K_{\rm norm}$ and in particular the fact that $\inf_{x,x' \in \X} K_{\rm norm}\eqcolon C_{\X}>0$ since $\X$ is compact. This control allows to establish a bound on $\norm{\mu_\omega^k}_{\rm TV}$. An alternative approach, which does not rely on the positivity of $C_{\X}$, can still yield a bound on $\norm{\mu_\omega^k}_{\rm TV}$, but this bound then depends on $p$ (see Remark \ref{remark:control_tv_iterates_wo_C_X} in appendix). In particular, without prior information on $\mu_\omega^\star$, one cannot expect convergence as $p$ grows, since the bound on $J_W\left(\frac{1}{K}\sum_{k=1}^K \mu_\omega^k\right) - J_W(\mu_\omega^\star)$ then involves the term $p r_{\mu_\omega^1,\X}$, which may scale as $p^{1-\frac{1}{(2d)}}$ when constructing $\mu_\omega^1$ on a grid.
\end{remark}

\subsection{Exponential local convergence} \label{section:exponential_convergence}

The investigation presented in this section relies on \citep[Corollary 3.5]{cpgd_chizat}. We leverage here not only the dynamic of the weights but also that of the location–scale parameters. To this end, we employ the Riemannian gradient descent updates defined in \eqref{eq:RGD} together with the weights update \eqref{eq:update_poids}. To avoid difficulties related to projection, we adopt a simple strategy: freezing particles outside a region of interest.
More precisely, we assume that the compact set $\X$ satisfies $\X \subset \R^d \times [u_{\min},u_{\max}]^d$, for some $\tau\leq u_{\min}\leq u_{\max}$. 
We also consider a known compact set $\tilde \X \subset \mathring \X$. Particles lying outside $\tilde \X$ are frozen during the optimization, which ensures that the iterates remain in $\X$ (see Lemma \ref{lemma:descent_property}). This strategy is formalized in Algorithm \ref{algo:CPGD_Frozen_Riemannian}.

\begin{algorithm}[H]
\caption{Conic Particles Riemannian Descent with freezing process}
\label{algo:CPGD_Frozen_Riemannian}
\begin{algorithmic}[1]
\Require Step sizes $\alpha,\beta \geq 0$, number of iterates $K$, initialization $\mu_\omega^1 = \sum_{j=1}^p \omega_j^1 \delta_{x_j^1}$
\For{$k = 1,\ldots K-1$}
    \For{$j = 1,\ldots,p$}
      \State 
    $\omega_j^{k+1} \gets \omega_j^k e^{-\alpha J'_{\mu_\omega^k}(x_j^k)}$ \Comment{Weight update}
     
    \If{$x_j^k \in \X \setminus \tilde \X$}   
    \State 
       $x_j^{k+1} \gets x_j^k$
    \Else
    \State
       $x_j^{k+1} \gets \exp_{x_j^k}(-\beta \nabla^\mathfrak{g}J_{\mu_\omega^k}'(x_j^k))$ \Comment{Location-scale update}
    \EndIf
    \EndFor
    \State $\mu_\omega^{k+1} \gets \sum_{j=1}^p \omega_j^{k+1} \delta_{x_j^{k+1}}$
\EndFor
\end{algorithmic}
\end{algorithm}

The update \eqref{eq:RGD} is well-defined, and $x_j^{k+1}$ remains in $\X$ provided that $\beta$ is sufficiently small.
Note that the underlying manifold is not geodesically complete, so $\exp_x(v)$ is not defined for all $v \in \R^d$. In practice, this can be addressed by restricting to vectors $v$ with bounded norm $\norm{v}_x$. This requirement, together with the need to ensure that $x_j^k$ remains in $\X$, motivates the assumption below that $\beta$ is sufficiently small.

\paragraph{Descent property}  The following lemma (whose proof is postponed to Appendix \ref{section:proof_descent_property}) provides a first description of the performances of Algorithm \ref{algo:CPGD_Frozen_Riemannian}. In particular, it establishes that the objective function $J_W$ is non-increasing along the trajectory $(\mu_\omega^k)_{k\geq 1}$.

\begin{lemma}\label{lemma:descent_property}
    Assume that $\kappa \leq 1$. For $0 \leq \alpha,\beta \leq 1$ small enough (depending on $\X, \tilde \X, \norm{y}_\L, \norm{\mu_\omega^1}_{\rm TV}, \tau$), the position updates of Algorithm \ref{algo:CPGD_Frozen_Riemannian} are well-defined and for all $k\geq 1$, \[J_W(\mu_\omega^{k+1})-J_W(\mu_\omega^k)\leq -\frac{1}{2} \int_{\tilde \X} \left(\alpha |J_{\mu_\omega^k}'(x)|^2 +\beta \norm{\nabla^{\mathfrak{g}} J_{\mu_\omega^k}'(x)}_{x}^2\right) \, \d\mu_\omega^k(x) -\frac{1}{2} \int_{\X\setminus \tilde \X} \alpha |J_{\mu_\omega^k}'(x)|^2  \, \d\mu_\omega^k(x) \,.\]
\end{lemma}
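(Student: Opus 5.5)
The plan is to follow the one-step descent argument of \citep{cpgd_chizat}: expand $J_W(\mu_\omega^{k+1})-J_W(\mu_\omega^k)$ with the exact identity \eqref{eq:dev_J_mu_plus_sigma_minus_J_mu}, applied to $\sigma=\mu_\omega^{k+1}-\mu_\omega^k$. The first step is an a priori confinement, proved by induction on $k$. We show that $\norm{\mu_\omega^k}_{\rm TV}\le M$ and that every particle stays in $\X$, for some $M$ depending on $\norm{\mu_\omega^1}_{\rm TV},\norm{y}_\L,\X$.

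For this, $J'_\mu$ and its derivatives are bounded uniformly on $\X$ by constants of the form $C(1+\norm{\mu}_{\rm TV})$. This follows from $|\innerprod{\Psi\delta_x}{\Psi\mu}_\L|\le \norm{\mu}_{\rm TV}$, from $\norm{\Psi\delta_x}_\L=1$, and from the smoothness of $K_{\rm norm}$ on the compact $\X$, where $u\in[u_{\min},u_{\max}]$ keeps $\mathfrak{g}_x$ and $\mathfrak{g}_x^{-1}$ uniformly bounded. Hence $\norm{\nabla^{\mathfrak{g}}J'_{\mu_\omega^k}(x)}_x\le C(1+M)$. For $\beta$ small, this vector lies inside the domain of $\exp_x$, which is uniform over the compact $\tilde\X$. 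The geodesic step then moves $x$ by Fisher--Rao distance at most $\beta C(1+M)$, which is smaller than $\mathfrak{d}_\mathfrak{g}(\tilde\X,\partial\X)>0$, so moving particles remain in $\X$. Frozen particles stay put. The TV bound is then propagated using the descent inequality itself: since $J_W(\mu_\omega^k)\le J_W(\mu_\omega^1)$, we get $\kappa\norm{\mu_\omega^k}_{\rm TV}\le J_W(\mu_\omega^1)$. Since $\kappa$ may be small, I would rather use the argument of Proposition \ref{prop:weight_convergence}, which bounds the TV norm via $C_\X=\inf K_{\rm norm}>0$ and is independent of $\kappa$.

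The core computation starts from
\[
\int J'_{\mu_\omega^k}\,\d\sigma=\sum_j\Big[(\omega_j^{k+1}-\omega_j^k)J'(x_j^k)+\omega_j^{k+1}\big(J'(x_j^{k+1})-J'(x_j^k)\big)\Big],
\]
where $J'=J'_{\mu_\omega^k}$. For the weight part we use $e^{-s}-1\le -s+s^2/2\cdot e^{|s|}$ with $s=\alpha J'(x_j^k)$. This gives $-\alpha\omega_j^k|J'|^2+O(\alpha^2)\omega_j^k|J'|^2$, and the second term is absorbed into $-\tfrac12\alpha$ once $\alpha$ is small relative to $\sup|J'|$. For a moving particle, we Taylor expand $J'$ along the geodesic $\gamma(s)=\exp_{x_j^k}(-s\beta\nabla^{\mathfrak{g}}J')$. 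The first derivative at $0$ is $\innerprod{\nabla^{\mathfrak{g}}J'}{\dot\gamma(0)}_x=-\beta\norm{\nabla^{\mathfrak{g}}J'}_x^2$. The second derivative is the Riemannian Hessian of $J'$ along the geodesic, bounded by $L\beta^2\norm{\nabla^{\mathfrak{g}}J'}_x^2$ with $L$ uniform on $\X$, since $K_{\rm norm}$ has bounded covariant derivatives there; the Christoffel symbols of $\mathfrak{g}$ are bounded for $u\ge u_{\min}$. The factor $\omega_j^{k+1}$ differs from $\omega_j^k$ by a factor $e^{\pm\alpha C}$. The cross term $(\omega_j^{k+1}-\omega_j^k)(J'(x_j^{k+1})-J'(x_j^k))$ is controlled by $\alpha\beta\,\omega_j^k|J'|\norm{\nabla^{\mathfrak{g}}J'}_x$, using the Lipschitz bound along the geodesic. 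Young's inequality splits it into small multiples of the two dissipation terms.

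The remaining term is $\frac12\norm{\Psi\sigma}_\L^2$. We write $\Psi\sigma=\sum_j(\omega_j^{k+1}-\omega_j^k)\Psi\delta_{x_j^{k+1}}+\sum_j\omega_j^k(\Psi\delta_{x_j^{k+1}}-\Psi\delta_{x_j^k})$ and use $\norm{\Psi\delta_x-\Psi\delta_{x'}}_\L\le \mathfrak{d}_\mathfrak{g}(x,x')$, since $\mathfrak{g}$ is the pullback metric of $\Psi$. Cauchy--Schwarz with weights $\omega_j^k$ then gives
\[
\norm{\Psi\sigma}_\L^2\lesssim M\sum_j\omega_j^k\big(\alpha^2|J'(x_j^k)|^2+\beta^2\norm{\nabla^{\mathfrak{g}}J'(x_j^k)}_{x_j^k}^2\big),
\]
where the position part is restricted to particles in $\tilde\X$. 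Collecting everything, we take $\alpha,\beta$ small enough, depending on $M,L,\tau$, that all error terms total at most half of $\alpha\int|J'|^2\d\mu_\omega^k+\beta\int_{\tilde\X}\norm{\nabla^{\mathfrak{g}}J'}_x^2\d\mu_\omega^k$. This yields the claim.

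The main obstacle is the uniform second-order control along Fisher--Rao geodesics: bounding the Riemannian Hessian of $J'_\mu$ and the domain of $\exp$ uniformly on $\X$. I would handle it by computing in coordinates. There $\frac{\d^2}{\d s^2}J'(\gamma(s))=\dot\gamma^T\nabla^2J'\dot\gamma-\nabla J'\cdot\Gamma(\dot\gamma,\dot\gamma)$, and every factor is bounded on the compact set thanks to $u\ge u_{\min}\ge\tau>0$. A secondary delicate point is closing the induction on the TV bound with constants independent of $k$.
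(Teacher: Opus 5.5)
Your proposal is correct and follows essentially the same route as the paper. It expands via \eqref{eq:dev_J_mu_plus_sigma_minus_J_mu} into a weight term, a geodesic Taylor term and an $\alpha\beta$ cross term absorbed by Young's inequality, splits the quadratic term through $\sum_j\omega_j^k\delta_{x_j^{k+1}}$, gets well-definedness from the positive Fisher--Rao margin between $\tilde\X$ and the complement of $\X$, and uses the $\kappa$-independent TV bound based on $C_\X=\inf K_{\rm norm}>0$ (Lemma \ref{lemma:control_tv_iterates}). The only differences are minor: you take uniform derivative bounds from compactness of $\X$ instead of the explicit global constants $B_{ij}$, and your chord--arc bound $\norm{\Psi(\delta_x-\delta_{x'})}_\L\le\mathfrak{d}_\mathfrak{g}(x,x')$, valid because $\mathfrak{g}$ is the pullback metric of $x\mapsto\Psi\delta_x$, is slightly sharper than the paper's $\sqrt{B_{02}}\,\mathfrak{d}_\mathfrak{g}(x,x')$ from Lemma \ref{lemma:taylor_expansions_kernel}.
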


The inequality displayed in Lemma \ref{lemma:descent_property} describes the behavior of the increment of the sequence $J_W(\mu_\omega^k)$ in terms of some energy terms related to the Fréchet derivative $J'_{\mu_\omega^k}$ and its gradient. This result is in line with the KKT conditions associated with our problem. In particular, the descent property holds as long as $J'_{\mu_\omega^k}\neq 0$ on a set where $\mu_\omega^k$ is strictly positive. However, the descent breaks off as soon as $J'_{\mu_\omega^k}$ cancels out in the support of $\mu_\omega^k$, which does not guarantee that $\mu_\omega^k$ is indeed a global minimum.

\paragraph{Non-degeneracy of the solution}
Establishing strong convergence guarantees requires structural assumptions on the solution $\mu_\omega^\star$. Specifically, the latter must be a discrete measure satisfying a \emph{non-degeneracy} condition (see Assumption \ref{assumption:non_degenerate_certif_solution} below). This condition involves the Riemannian Hessian, which we define next.

\begin{definition}[Riemannian Hessian]\label{def:riemannian_hessian}
Let $\psi \in \C^2(\R^d \times [u_{\min}, +\infty)^d)$. Let $x \in \R^d \times [u_{\min}, +\infty)^d$.
We define \[H^{\mathfrak{g}} \psi(x)=\nabla^2 \psi(x)-\sum_{k=1}^d\Gamma^{t_k} \partial_{t_k}\psi(x)-\sum_{k=1}^d\Gamma^{u_k} \partial_{u_k}\psi(x)\,,\] with $\Gamma^{t_k},\Gamma^{u_k}$ defined by \eqref{eq:christoffel_symbols}.
\end{definition}

\begin{assumption}[Non-degenerate certificate for the solution] \label{assumption:non_degenerate_certif_solution}
\textbf{Location of the solution:} The solution to \eqref{eq:pb_BLASSO_gaussian_kernel_norm} is unique and of the form
$\mu_\omega^\star=\sum_{j=1}^{p^\star} \omega_j^\star \delta_{x_j^\star}$. Moreover, it satisfies $\omega_j^\star \geq \omega_\star>0$ and $x_j^\star \in \mathring{\tilde \X}$.
\\\textbf{Non-degeneration:}
In addition, there exist $\varepsilon_2>0$, $C_{\tilde \Upsilon}>0$ and a non-degenerate certificate associated with $\mu_\omega^\star$, namely \[\eta^\star : x \in \R^d \times [u_{\min},+\infty)^d \mapsto -\frac{1}{\kappa} \innerprod{\Psi \delta_x}{\Psi \mu_\omega^\star - y}_\L= 1-\frac{J_{\mu_\omega^\star}'(x)}{\kappa}\] satisfying
\begin{enumerate}
 \item $\eta^\star \leq 1$ on $\X$,
 \item $\eta^\star(x)=1 \iff x \in \{x_j^\star\}_{j=1}^{p^\star}$,
 \item $\nabla \eta^\star(x_j^\star)=0$ for all $j=1,\ldots,p^\star$,
  \item $-v^T H^\mathfrak{g} \eta^\star(x_j^\star) v \geq \varepsilon_2 \norm{v}_{x_j^\star}^2$ for all $v\in \R^{2d}$, $j\in\{1,\ldots,p^\star\}$\,,
 \item $\tilde \Upsilon \succeq C_{\tilde \Upsilon} I$ with $C_{\tilde \Upsilon} >0$,
where 
\begin{equation}
\label{eq:tilde_Upsilon}
\tilde \Upsilon\coloneq  \begin{pmatrix}
(K_{\rm norm}(x_j^\star,x_i^\star))_{j,i=1,\ldots,p^\star} & (\nabla_2 K_{\rm norm}(x_j^\star,x_i^\star)^T \mathfrak{g}_{x_i^\star}^{-1/2})_{j,i=1,\ldots,p^\star} \\
(\mathfrak{g}_{x_j^\star}^{-1/2}\nabla_1 K_{\rm norm}(x_j^\star,x_i^\star))_{j,i=1,\ldots,p^\star} & (\mathfrak{g}_{x_j^\star}^{-1/2}\nabla_1 \nabla_2 K_{\rm norm}(x_j^\star,x_i^\star) \mathfrak{g}_{x_i^\star}^{-1/2})_{j,i=1,\ldots,p^\star}
\end{pmatrix} \,.
\end{equation}
\end{enumerate}
\end{assumption}

The non-degeneracy assumption is in line with \citep{off-the-grid_cs,blasso_duval_peyre}, where it is introduced to establish the closeness between $\mu_\omega^\star$ and the statistical target measure $\mu^0$. It is referred to as the non-degenerate source condition (NDSC), which postulates the existence of a non-degenerate certificate associated with $\mu^0$.

Here, however, the non-degenerate certificate is associated with $\mu_\omega^\star$ rather than with $\mu^0$. The connection between the non-degeneracy of $\mu^0$ and that of the BLASSO solution is discussed in \citep[Section 3]{blasso_duval_peyre}.

Note that any solution of the BLASSO satisfies $\eta^\star \leq 1$ on $\X$ and $\eta^\star=1$ on $\supp(\mu_\omega^\star)$ (this follows from the KKT conditions, \cf \citep[Proposition 3.1]{cpgd_chizat}).

\paragraph{Convergence guarantees} Under Assumption \ref{assumption:non_degenerate_certif_solution}, combined with a strong condition on the initialization, we establish exponential convergence of the objective values, as stated in the following theorem whose proof is given in Appendix \ref{section:proof_exponential_cv}.

\begin{theorem}[Exponential local convergence under non-degeneracy of the solution]\label{th:exponential_cv} Let $0<\kappa \leq 1$.
\\\underline{Conditions on the solution:} We suppose that Assumption \ref{assumption:non_degenerate_certif_solution} is satisfied.
\\\underline{Choice of initialization, updates:} Assume that $\mu_\omega^1 = \sum_{j=1}^p \omega_j^1 \delta_{x_j^1}$ with $\omega_j^1>0$ and $x_j^1\in\X$. Assume that it satisfies 
\begin{subequations}\label{eq:exponential_cv}
\begin{equation}
\label{eq:cond_init}
J_W(\mu_\omega^1) - J_W(\mu_\omega^\star) \leq J_0 \kappa^{12}\, ,
\end{equation}
with $J_0>0$ depending on $\X$, $\tilde \X$, $\tau$, $\mu_\omega^\star$, $\eta^\star$, $\norm{y}_\L$, $\norm{\mu_\omega^1}_{\rm TV}$.
We use Algorithm \ref{algo:CPGD_Frozen_Riemannian} together with the parameter choices $\alpha, \beta$ described in Lemma \ref{lemma:descent_property}.
\\\underline{Convergence of CPGD:} There exists $R_0>0$, depending on $\X$, $\tilde \X$, $\tau$, $\mu_\omega^\star$, $\eta^\star$, $\norm{y}_\L$ and $\norm{\mu_\omega^1}_{\rm TV}$, such that $\frac12\min\{\alpha,\beta\}R_0\kappa^6<1$ and, for all $k\geq 1$,
\begin{equation}
\label{eq:rate_exponential_cv}
J_W(\mu_\omega^k)- J_W(\mu_\omega^\star) \leq \left(1-\frac{1}{2} \min\{\alpha,\beta\} R_0 \kappa^6 \right)^{k-1} (J_W(\mu_\omega^1)- J_W(\mu_\omega^\star))\,.
\end{equation}
\end{subequations}
\end{theorem}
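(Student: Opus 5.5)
The argument follows the classical scheme of \citep[Corollary 3.5]{cpgd_chizat}: combine the descent inequality of Lemma \ref{lemma:descent_property} with a Polyak--Łojasiewicz (sharpness) inequality that holds on a sublevel set of $J_W$ near $\mu_\omega^\star$. Write $\Delta_k \coloneq J_W(\mu_\omega^k)-J_W(\mu_\omega^\star)$ and define the weighted squared gradient norm
\[
G(\mu)\coloneq \int_{\tilde\X} \left(|J_\mu'(x)|^2+\norm{\nabla^{\mathfrak g}J_\mu'(x)}_x^2\right)\d\mu(x)+\int_{\X\setminus\tilde\X}|J_\mu'(x)|^2\,\d\mu(x).
\]
Lemma \ref{lemma:descent_property} gives $\Delta_{k+1}-\Delta_k\le -\tfrac12\min\{\alpha,\beta\}\,G(\mu_\omega^k)$. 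The core of the proof is therefore the local inequality
\[
G(\mu)\ge R_0\kappa^6\,\bigl(J_W(\mu)-J_W(\mu_\omega^\star)\bigr)\quad\text{whenever } J_W(\mu)-J_W(\mu_\omega^\star)\le J_0\kappa^{12}.
\]
Since the descent property keeps every iterate in this sublevel set, \eqref{eq:cond_init} propagates to all $k$. The recursion $\Delta_{k+1}\le(1-\tfrac12\min\{\alpha,\beta\}R_0\kappa^6)\Delta_k$ then yields \eqref{eq:rate_exponential_cv}. Taking $\alpha,\beta\le1$ small, as in the lemma, ensures the contraction factor lies in $(0,1)$.

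To prove the PL inequality, I would proceed in three steps.

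\emph{Step 1 (small sublevel set implies localization).} Note that $J_W(\mu)-J_W(\mu_\omega^\star)\ge \tfrac12\norm{\Psi(\mu-\mu_\omega^\star)}_\L^2+\kappa\int(1-\eta^\star)\,\d\mu$, which follows from \eqref{eq:dev_J_mu_plus_sigma_minus_J_mu} and the KKT identity $J'_{\mu^\star_\omega}=\kappa(1-\eta^\star)$. Non-degeneracy items 1, 2 and 4 give $1-\eta^\star(x)\gtrsim \varepsilon_2\,\mathfrak d_{\mathfrak g}(x,\{x_j^\star\})^2$ near the spikes and a uniform positive gap away from them. Consequently, the mass of $\mu$ far from $\{x_j^\star\}$ is $O(\Delta/\kappa)$, and the second moment $\sum_j\int_{B_j}\mathfrak d_{\mathfrak g}(x,x_j^\star)^2\,\d\mu$ over small Fisher--Rao balls $B_j$ is also $O(\Delta/\kappa)$.

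\emph{Step 2 (local parametrization).} Decompose $\mu$ into near-spike and far parts. On each ball $B_j$, record the local mass $m_j$ and the Riemannian barycentric displacement $\delta_j=\frac{1}{m_j}\int_{B_j}\log_{x_j^\star}(x)\,\d\mu$. Then Taylor-expand $\Psi\mu$ to second order around $\Psi\mu_\omega^\star$, using $\Psi\delta_{x_j^\star}$ and its $\mathfrak g^{-1/2}$-normalized derivatives. The linear part is controlled from below by $C_{\tilde\Upsilon}$ through item 5 of Assumption \ref{assumption:non_degenerate_certif_solution}. This gives
\[
\norm{\Psi(\mu-\mu^\star_\omega)}^2_\L\gtrsim C_{\tilde\Upsilon}\sum_j\bigl(|m_j-\omega_j^\star|^2+\omega_\star\norm{\delta_j}^2\bigr)-\text{(higher-order and far-mass terms)}.
\]

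\emph{Step 3 (lower bound on $G(\mu)$).} Write $J'_\mu=\kappa(1-\eta^\star)+\langle\Psi\delta_x,\Psi(\mu-\mu_\omega^\star)\rangle_\L$. Integrating $|J'_\mu|^2$ and $\norm{\nabla^{\mathfrak g}J'_\mu}^2$ against $\mu$ and applying Cauchy--Schwarz produces a lower bound of the form $\tilde\Upsilon$-quadratic form $+\ \kappa^2\times$(moment terms) $-$ cross terms. Matching this against an upper bound on $\Delta$ by these same quantities (obtained via the Hessian bound on $\eta^\star$, the smoothness of $K_{\rm norm}$ on compact $\X$, and bounded TV mass from the descent property) gives the inequality.

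The main obstacle is Step 3, and in particular the careful bookkeeping of powers of $\kappa$. Absorbing the cross and higher-order terms requires the smallness radius to scale like $\kappa^{6}$ in $\Delta^{1/2}$; this is the origin of the conditions $J_0\kappa^{12}$ and $R_0\kappa^6$, which I expect to be non-optimal. I would first try to reproduce Chizat's sharpness argument verbatim, replacing Euclidean Taylor expansions by expansions along Fisher--Rao geodesics, with $H^{\mathfrak g}$ in place of the Hessian. Uniform bounds on the Christoffel symbols over $\X$, available because $u\ge u_{\min}\ge\tau$, control the resulting error terms. Frozen particles in $\X\setminus\tilde\X$ only contribute to the far-mass part, where the $|J'|^2$ term suffices, since $J'_\mu\gtrsim\kappa$ there.
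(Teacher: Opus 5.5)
Your architecture is the paper's: the descent inequality of Lemma~\ref{lemma:descent_property} plus a local Polyak--Lojasiewicz bound on the sublevel set. The paper proves that bound with the same ingredients you list: a near/far splitting, local masses and Fisher--Rao barycentric displacements (the paper's $\tilde b_\omega,\tilde b_x$), $\tilde\Upsilon\succeq C_{\tilde\Upsilon}I$ for the linear part, and a lower bound on $G_\mu$ matched against an upper bound on the gap $\Delta=J_W(\mu)-J_W(\mu_\omega^\star)$.

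The missing piece is in Step~3, where you defer to ``bookkeeping''. Take near balls of a fixed radius $r$. The value $J'_{\mu_\omega^\star}(x)$, and the remainder of $\nabla J'_{\mu_\omega^\star}(x)$ beyond its linear part at $x_j^\star$, are $O(\mathfrak{d}_\mathfrak{g}(x_j^\star,x)^2)$ only with $O(1)$ constants. This is because the covariant derivatives of $J'_{\mu_\omega^\star}$ are bounded by multiples of $\norm{\mu_\omega^\star}_{\rm TV}+\norm{y}_\L$, not of $\kappa$. They therefore leave an error $C\,r^2\sum_j\int_{\X_j^{\rm near}(r)}\mathfrak{d}_\mathfrak{g}(x_j^\star,x)^2\,\mathrm{d}\mu(x)$ in the lower bound on $G_\mu$. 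By \eqref{eq:identification_error_terms} this error is $O(r^2\Delta/\kappa)$, i.e.\ \emph{linear} in $\Delta$, so shrinking the sublevel set does not absorb it. The only positive term that controls the within-ball spread is
$\kappa^2\varepsilon_2^2\sum_j\int_{\X_j^{\rm near}(r)}\norm{\mathfrak{g}_{x_j^\star}^{1/2}\dot\gamma_{x_j^\star\to x}(0)-\tilde b_{x_j}/\mu(\X_j^{\rm near}(r))}_2^2\,\mathrm{d}\mu(x)$.
It comes from the gradient term through a bias--variance decomposition and $\tilde H_j\succeq\kappa\varepsilon_2 I$. Absorbing the error therefore forces $r\lesssim\kappa$, and the paper takes $r=\min\{C_r\kappa,r_{\max}'\}$.

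This choice changes the far region. It now contains the annulus $r<\mathpzc{d}(x,x_j^\star)\le r_{\max}$, where only $1-\eta^\star\ge\varepsilon_0'r^2$ holds (item~4 of Lemma~\ref{lemma:control_certif_near_regions}, from the quadratic decay of $\eta^\star$ and Lemma~\ref{lemma:bound_semidist_fisher_rao}). Hence $|J'_{\mu_\omega^\star}|\ge\kappa\varepsilon_0'r^2\asymp\kappa^3$ there, and $\mu(\X^{\rm far}(r))\lesssim\Delta/(\kappa r^2)$. The uniform gap of your Step~1, and the bound $J'_\mu\gtrsim\kappa$, survive only outside a fixed neighborhood of the spikes (e.g.\ on $\X\setminus\tilde\X$, so your frozen-particle remark is fine).

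Tracking this through gives three quantities:
\begin{itemize}
\item a far contribution $\kappa^2r^4\mu(\X^{\rm far}(r))$ to $G_\mu$;
\item an error $\Delta^2/(\kappa^2r^4)$;
\item the bound $\norm{\tilde b}_2^2\lesssim\Delta$, valid once $\Delta\le\kappa^2r^4$.
\end{itemize}
Together they give $G_\mu\ge R_0'\kappa^2r^4\Delta$ whenever $\Delta\le J_0'\kappa^4r^8$. With $r\asymp\kappa$ this is exactly $R_0\kappa^6$ and $J_0\kappa^{12}$. Your heuristic (a sublevel radius ``scaling like $\kappa^6$ in $\Delta^{1/2}$'') restates the conclusion rather than supplying this mechanism.

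Finally, take the mass bound from Lemma~\ref{lemma:control_tv_iterates}, which gives $B_{\rm TV}$ independent of $\kappa$. Deriving it from the descent property via $\kappa\norm{\mu_\omega^k}_{\rm TV}\le J_W(\mu_\omega^1)$ would push further negative powers of $\kappa$ into the constants.
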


In contrast to Lemma \ref{lemma:descent_property}, Theorem \ref{th:exponential_cv} describes the behavior of the difference between the current state $J_W(\mu_\omega^k)$ and the target $J_W(\mu_\omega^\star)$ in terms of the iteration number $k$. In particular, it establishes exponential convergence of the objective values, provided that $\kappa$ is small enough and that the initialization condition \eqref{eq:cond_init} holds; the convergence is hence said to be local.

Three limitations should be kept in mind when reading Theorem \ref{th:exponential_cv}. First, the constants $J_0$ and $R_0$ depend on the solution $\mu_\omega^\star$ and on its associated certificate $\eta^\star$, both of which are unknown. Condition \eqref{eq:cond_init} should therefore be understood as a qualitative locality requirement, and not as a criterion that can be evaluated on a given initialization; likewise, the rate in \eqref{eq:rate_exponential_cv} is not a quantity the user can compute in advance. Second, for the sake of proof readability, the exponents for $\kappa$ in \eqref{eq:cond_init} and \eqref{eq:rate_exponential_cv} are not optimized, as discussed in Remark \ref{remark:dependence_kappa_exp_convergence}. For $\kappa$ small, the resulting basin of attraction is correspondingly narrow, and the guarantee is qualitative rather than quantitative in nature. Third, Theorem \ref{th:exponential_cv} says nothing about how the initialization condition may be met: Proposition \ref{prop:weight_convergence} controls the Cesàro average rather than the last iterate, and we do not claim that combining the two yields a complete convergence theory. We note that the numerical behavior reported in Section \ref{section:numerical_experiments} is considerably more favorable than these constants would suggest.

Since the problem \eqref{eq:pb_BLASSO_gaussian_kernel_norm} restricted to discrete measures is non-convex, obtaining global convergence guarantees remains challenging. We refer, for instance, to \citep{decastro2026fsp} for a similar problem involving a translation-invariant kernel.

\paragraph{Checking the non-degeneracy assumption}
Assumption \ref{assumption:non_degenerate_certif_solution} allows to obtain an exponential local convergence for Algorithm \ref{algo:CPGD_Frozen_Riemannian} and plays a crucial role in our analysis. Nevertheless, it is impossible to check in practice. We can however try to give some hints on this condition in a specific setting, by shifting the condition on the solution $\mu_\omega^\star$ to an underlying  statistical ground truth. More precisely, we assume that the measurement $y$ can be written as
\begin{equation}\label{eq:form_y_noise_target}
    y=\Psi \mu_\omega^0 + b \,,
\end{equation} where $b \in \L$ and $\mu_\omega^0 \coloneq \sum_{j=1}^s \omega_j^0 \delta_{x_j^0}$, with $\omega_j^0>0$ and $x_j^0 \in \X$. We interpret $b$ as noise and refer to $\mu_\omega^0$ as the statistical target. Section \ref{section:statistical_motivation} discusses the relevance of this model in a statistical context. 

We prove below that Assumption \ref{assumption:non_degenerate_certif_solution} holds provided that the noise $b$ and the regularization parameter $\kappa$ are sufficiently small, and that the particles of $\mu_\omega^0$ are well separated with respect to a tailored semi-distance. This semi-distance is defined, for $x,x'\in \R^d \times [u_{\min},+\infty)^d$, by
\begin{equation}\label{eq:def_semi_distance}
 \mathpzc{d}(x,x')^2= -2 \ln(K_{\rm norm}(x,x'))= \sum_{k=1}^d \left(\frac{(t_k-t_k')^2}{{u_k'}^2+u_k^2+\tau^2} + \ln\left(\frac{{u_k'}^2+u_k^2+\tau^2}{\sqrt{2u_k^2+\tau^2}\sqrt{2{u_k'}^2+\tau^2}}\right)\right)\,.
\end{equation} 
These conditions are formalized in the following theorem whose proof is postponed to Appendix \ref{section:proof_checking_non_degeneracy_solution}.

\begin{theorem}[Checking Assumption \ref{assumption:non_degenerate_certif_solution}]\label{th:check_assumptions_solution}
Assume that $y$ is of the form \eqref{eq:form_y_noise_target}, that $s\geq 2$ and $\{x_j^0\}_{j=1}^s \subset \mathring{\tilde \X}$. 
Assume that $\min_{i \neq j}\mathpzc{d}(x_j^0,x_i^0) \geq \Delta_\tau$, where
\begin{subequations}\label{eq:check_assumptions}
\begin{equation}
\label{eq:def_Delta_tau}
\Delta_\tau \coloneq \max \left\{\frac{\sqrt{u_{\max}^2 +\frac{1}{4} \frac{0.3025^2}{d}(2u_{\max}^2+\tau^2)}}{u_{\min}} \left(\Delta+\frac{0.3025}{\sqrt{d}}\right) \, , \, 2\frac{u_{\max}}{u_{\min}} \Delta \right\} + \sqrt{d\ln \left(\frac{u_{\max}^2}{u_{\min}^2}\right)}
\end{equation}
and
\begin{equation}
\label{eq:def_Delta}
\Delta= 2\sqrt{11.9+3\ln(d+6.62)+\ln(s-1)}\,.
\end{equation}
\end{subequations}

There exists $\gamma_0>0$ depending on $d$, and $C_\kappa>0$ depending on $\X,\tilde \X, \tau, \mu_\omega^0$ such that if $0<\kappa\leq C_\kappa$ and $\norm{b}_\L \leq \gamma_0 \kappa$, then Assumption \ref{assumption:non_degenerate_certif_solution} is satisfied with $C_{\tilde \Upsilon}=\frac{1}{4}$.
\end{theorem}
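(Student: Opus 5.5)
The plan follows the Duval--Peyré strategy: first build a non-degenerate certificate for the statistical target $\mu_\omega^0$, then transfer non-degeneracy to the BLASSO solution $\mu_\omega^\star$ in the low-noise, small-$\kappa$ regime.

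\textbf{Step 1: the vanishing-derivative pre-certificate for $\mu_\omega^0$.} Set
\[
\eta_V(x)=\sum_{i=1}^s \alpha_i K_{\rm norm}(x,x_i^0)+\sum_{i=1}^s \beta_i^T\, \mathfrak{g}_{x_i^0}^{-1/2}\nabla_2 K_{\rm norm}(x,x_i^0).
\]
The coefficients $(\alpha,\beta)$ are fixed by the interpolation conditions $\eta_V(x_j^0)=1$ and $\mathfrak{g}_{x_j^0}^{-1/2}\nabla\eta_V(x_j^0)=0$. This linear system has matrix $\Upsilon^0$, which is the matrix $\tilde\Upsilon$ of \eqref{eq:tilde_Upsilon} evaluated at the points $x^0$.

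The diagonal blocks of $\Upsilon^0$ are the identity. This holds because $K_{\rm norm}(x,x)=1$, $\nabla_1K_{\rm norm}(x,x)=0$, and $\mathfrak{g}_x=\nabla_1\nabla_2K_{\rm norm}(x,x)$, so the normalisation by $\mathfrak{g}^{-1/2}$ cancels it.

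The off-diagonal blocks are controlled by $K_{\rm norm}=e^{-\mathpzc{d}^2/2}$, times polynomial factors coming from derivatives in the Fisher-normalised coordinates.

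\begin{itemize}
\item I would bound each off-diagonal entry by $C(d)\,P(\mathpzc{d})\,e^{-\mathpzc{d}^2/2}$.
\item A Gershgorin/Schur-type bound then gives $\|\Upsilon^0-I\|\le (s-1)\,C(d)\,e^{-c\Delta^2}$.
\item The constant $\Delta=2\sqrt{11.9+3\ln(d+6.62)+\ln(s-1)}$ is designed exactly so that this error is at most $1/4$ or smaller.
\end{itemize}

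The $\ln(s-1)$ term absorbs the sum over the other spikes. The $u_{\max}/u_{\min}$ and $\sqrt{d\ln(u_{\max}^2/u_{\min}^2)}$ corrections in $\Delta_\tau$ come from converting the $t$-part of $\mathpzc{d}$ into a uniform Gaussian decay, since the variances in the denominator differ between spikes. This yields $\Upsilon^0\succeq \tfrac12 I$, $\|\alpha-\mathbf 1\|$ small and $\|\beta\|$ small.

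\textbf{Step 2: non-degeneracy of $\eta_V$.} This is the main obstacle. I must show $\eta_V<1$ away from the spikes, and $-H^{\mathfrak{g}}\eta_V(x_j^0)\succeq \varepsilon\,\mathfrak{g}_{x_j^0}$. I would split $\X$ into near regions $\{\mathpzc{d}(x,x_j^0)\le r\}$, with $r\approx 0.3025/\sqrt d$ (this explains the constant inside $\Delta_\tau$), and a far region.

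In the near region, I write $\eta_V$ as the single-spike term $\alpha_jK_{\rm norm}(x,x_j^0)$ plus a perturbation. The single-spike term has a strictly concave Riemannian Hessian: $-H^{\mathfrak{g}}K_{\rm norm}(\cdot,x_j^0)\succeq c\,\mathfrak{g}$ on the ball, which I would check from the explicit formula using the Christoffel symbols. The perturbation's Hessian is exponentially small thanks to the separation. Riemannian Taylor expansion then gives $\eta_V<1$ on the ball minus the center.

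In the far region, each term satisfies $K_{\rm norm}\le e^{-r^2/2}<1$. The remaining terms sum to something small by separation, so the maximum stays bounded by a constant strictly below $1$. The technical core is the explicit derivative bounds of $K_{\rm norm}$ in Fisher coordinates, uniformly over $u\in[u_{\min},u_{\max}]$.

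\textbf{Step 3: transfer to $\mu_\omega^\star$.} I would follow \citep[Section 3]{blasso_duval_peyre}. Non-degeneracy of $\eta_V$ together with injectivity of $\Upsilon^0$ implies that, for $\kappa\le C_\kappa$ and $\|b\|_\L\le\gamma_0\kappa$, the solution is unique and has exactly $s$ spikes. These spikes are obtained by an implicit function theorem applied to the first-order system $\Psi^*(\Psi\mu-y)=-\kappa\,\eta_V$-type equations. Their weights and positions are $O(\kappa+\|b\|/\kappa)$-close to $(\omega^0,x^0)$, hence they lie in $\mathring{\tilde\X}$, with $\omega_j^\star\ge\omega_j^0/2$.

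The certificate $\eta^\star$ is then $C^2$-close to $\eta_V$. So items 1--4 of Assumption \ref{assumption:non_degenerate_certif_solution} persist with $\varepsilon_2=\varepsilon/2$, using compactness of $\X$ and the local strict concavity. For item 5, $\tilde\Upsilon$ depends continuously on the spike locations, so $\tilde\Upsilon\succeq\Upsilon^0-\tfrac14 I\succeq\tfrac14 I$ once $C_\kappa$ and $\gamma_0$ are small enough.
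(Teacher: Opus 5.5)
Your overall architecture matches the paper's. You establish the NDSC for $\mu_\omega^0$ from the separation; the paper simply imports your Steps 1--2, including $\tilde\Upsilon_0\succeq\frac12 I$, from \citep{article_giard_decastro_marteau}. You then transfer non-degeneracy to $\mu_\omega^\star$ and use continuity of $\tilde\Upsilon$ in the spike locations.

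\textbf{The gap is in Step 3, and it concerns the quantifiers of the statement.} $\gamma_0$ must depend on $d$ only, with every target-dependent quantity absorbed into $C_\kappa$. Your argument ends with ``once $C_\kappa$ and $\gamma_0$ are small enough'', so it produces a $\gamma_0$ depending on $\mu_\omega^0$, $\tilde\X$ and $\tau$. Two things cause this.

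First, the bound $O(\kappa+\norm{b}_\L/\kappa)$ you assign to the weights and positions is the size of the \emph{certificate} perturbation, not of the spikes. Linearizing $J'_{\mu_\omega^\star}(x_j^\star)=0$ and $\nabla J'_{\mu_\omega^\star}(x_j^\star)=0$ around $(\omega^0,x^0)$ gives a right-hand side of size $O(\kappa+\norm{b}_\L)$. Hence, under $\norm{b}_\L\le\gamma_0\kappa$, both $|\omega_j^\star-\omega_j^0|$ and $\mathpzc{d}(x_j^0,x_j^\star)$ are $\lesssim\kappa$, with constants depending on $\X,\tau,\mu_\omega^0$. The ratio $\norm{b}_\L/\kappa$ only enters $\eta^\star-\eta_V$, through the term $\frac1\kappa\Psi^*\Pi^\perp b$. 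Here $\Pi^\perp$ is the projector onto the orthogonal complement of the span of the $\Psi\delta_{x_j^\star}$ and their derivatives. With the correct spike bound, the three properties $x_j^\star\in\mathring{\tilde\X}$, $\min_j\omega_j^\star\ge\frac12\min_j\omega_j^0$ and $\tilde\Upsilon\succeq\frac14 I$ follow by shrinking only the $\kappa$-threshold. This is exactly what the paper does.

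Second, the Duval--Peyr\'e result you invoke for uniqueness and $s$-sparsity is an implicit-function/continuity statement. Its admissible noise-to-regularization ratio depends on the whole problem (the inverse Jacobian at $(\omega^0,x^0)$ and the margins of $\eta_V$). So it cannot give $\gamma_0=\gamma_0(d)$ either. The paper instead uses the quantitative Fisher-metric localization result, Theorem 6.1 of \citep{article_giard_decastro_marteau}, in the spirit of \citep{off-the-grid_cs}. There the allowed $\norm{b}_\L/\kappa$ is controlled only by the explicit local-curvature constants and the kernel bounds $B_{ij}$, hence by $d$. That proof also gives items 1--4 with the explicit value $\varepsilon_2=0.06158$.

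To repair your route you would need to:
\begin{itemize}
\item make the non-degeneracy margins of Step 2 explicit functions of $d$, which is what the separation $\Delta_\tau$ is designed to provide;
\item bound $\frac1\kappa\Psi^*\Pi^\perp b$ in the Fisher-normalized $C^2$ norm by $C(d)\gamma_0$;
\item replace the qualitative implicit function theorem by a quantitative argument of that type.
\end{itemize}

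Two minor points. Item 3 of the Assumption follows from first-order optimality at the interior maximizer $x_j^\star$, not from $C^2$-closeness. Your far-region argument in Step 2 tacitly uses a triangle inequality for $\mathpzc{d}$, which is only a semi-distance.
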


This result relies on the connection between the Non-Degenerate Source Condition introduced by \citep{blasso_duval_peyre}, studied in our setting by \citep{article_giard_decastro_marteau}, which concerns the non-degeneracy of the target $\mu_\omega^0$, and the non-degeneracy of the BLASSO solution. 

The numerical constants appearing in \eqref{eq:def_Delta_tau} and \eqref{eq:def_Delta} are those of the local positive curvature property satisfied by $K_{\rm norm}$: the radius $\frac{0.3025}{\sqrt{d}}$ and the associated curvature parameters are established in \citep[Theorem 5.3 and Lemma 5.4]{article_giard_decastro_marteau}, and the separation \eqref{eq:def_Delta_tau} is the corresponding condition of \citep[Theorem 5.1]{article_giard_decastro_marteau}.

It is worth making explicit what is imported from that work and what is established here. The construction of non-degenerate dual certificates for the statistical target $\mu_\omega^0$, together with the separation condition under which it succeeds, is the object of \citep{article_giard_decastro_marteau}. Assumption \ref{assumption:non_degenerate_certif_solution}, by contrast, bears on the BLASSO solution $\mu_\omega^\star$, and it is this form that the analysis of Section \ref{section:exponential_convergence} requires: the descent argument is driven by the behavior of $\eta^\star$ in the vicinity of $\supp(\mu_\omega^\star)$, a set that is not known in advance and does not coincide with $\supp(\mu_\omega^0)$. Theorem~\ref{th:check_assumptions_solution} accordingly performs two steps that have no counterpart in \citep{article_giard_decastro_marteau}: it transfers non-degeneracy from $\mu_\omega^0$ to $\mu_\omega^\star$, which calls for quantitative control of both the locations and the weights of the solution in the small-$\kappa$, small-noise regime; and it upgrades the resulting condition to a Riemannian one, expressed through the Fisher-Rao Hessian $H^{\mathfrak{g}}$ and the metric-normalized matrix $\tilde \Upsilon$ of \eqref{eq:tilde_Upsilon}. The normalization by $\mathfrak{g}_{x_j^\star}^{-1/2}$ is what renders the curvature bound uniform in the location-scale parameters, and hence usable to produce the explicit rate of Theorem \ref{th:exponential_cv}.

\medskip
Note that Theorems \ref{th:exponential_cv} and \ref{th:check_assumptions_solution} establish pointwise convergence rates. In Theorem \ref{th:check_assumptions_solution}, the condition on $b$ required for Assumption \ref{assumption:non_degenerate_certif_solution} to hold depends on the statistical target $\mu_\omega^0$. Meanwhile, in Theorem \ref{th:exponential_cv}, both the condition on the initialization and the resulting convergence rate depend on the solution $\mu_\omega^\star$.

\paragraph{Positioning relative to \citep{cpgd_chizat}} Our contribution relies on \citep[Corollary 3.5]{cpgd_chizat}.
A difference with the setting of \citep{cpgd_chizat} is that the domain of the particle positions is not a compact manifold without boundary. The presence of a boundary in $\X$ requires a specific treatment, which we handle by freezing particles outside $\tilde \X$.

In addition, we provide an interpretable condition under which \citep[Assumption 5]{cpgd_chizat}, is satisfied. This yields a concrete criterion for verifying the assumption in our setting. To the best of our knowledge, this is the first work showing that, in a statistical framework, the non-degeneracy assumption of \citep{cpgd_chizat} is met under a more explicit condition on the support separation and noise boundedness.

\section{Numerical experiments}
\label{section:numerical_experiments}

In this section, we discuss the practical implementation of CPGD for the statistical estimation of a Gaussian mixture model with unknown (diagonal) covariances and present numerical experiments on simulated data. The code is shared in the archived release \citep{repo_zenodo_cpgd}. We first provide a proof of concept in a favorable setting, illustrating the good convergence properties of CPGD. We then compare its performance to the EM algorithm. Finally, we investigate the influence of the separation between target components on recovery accuracy.

\subsection{Statistical motivation}\label{section:statistical_motivation}
The optimization problem \eqref{eq:pb_BLASSO_gaussian_kernel_norm} is particularly relevant in a statistical setting.
In this framework, the goal is to recover the target probability measure encoding a Gaussian mixture distribution,
\[\mu^0=\sum_{j=1}^{s} a_j^0\delta_{x_j^0} 
\quad \text{where} 
\quad a_1^0,\ldots a_s^0>0\,, \; \sum_{j=1}^{s} a_j^0=1\,.\] We assume that the location-scale parameters $(x_1^0, \ldots, x_s^0)$ are distinct, where for any $j\in \lbrace 1,\dots, s \rbrace$, $x_j^0 = (t_j^0, u_j^0)$, with $t_j^0 = (t_{j,k}^0)_{k=1}^d \in \R^d$ and $u_j^0 = (u_{j,k}^0)_{k=1}^d \in [u_{\min}, +\infty)^d$. The parameters $t_j^0$ and $u_j^0$ represent respectively the mean and square root of the diagonal covariance of the $j^{th}$ Gaussian component with weight $a_j^0$. The lower bound $u_{\min}$ on the minimal eigenvalue of each covariance matrix is a standard constraint for Gaussian mixture estimation based on likelihood maximization (see, \eg, \citep{bouveyron_model-based_2019}).

We observe a sample $X_1,\ldots,X_n \iid f_0$ drawn according to a Gaussian mixture distribution, whose density $f_0$ is associated with the measure $\mu^0$. In particular, it can be rewritten as $f_0\coloneq \Phi \mu^0$ where $\Phi:\M(\R^d \times [u_{\min},+\infty)^d) \longrightarrow L^2(\R^d)$ is the linear operator defined by \[\Phi\mu : z \in \R^d \mapsto \int_{\R^d \times [u_{\min},+\infty)^d} \prod_{k=1}^d \frac{1}{\sqrt{2\pi}u_k} e^{\frac{-(z_k - t_k)^2}{2u_k^2}} \,\d \mu(t,u) \quad \forall \, \mu \in \M(\R^d \times [u_{\min},+\infty)^d)\,.\]

To compare the observations with a candidate density, we use a Gaussian convolution with smoothing parameter $\tau$: introducing $\lambda: z \in \R^d \mapsto \frac{e^{-\frac{\norm{z}_2^2}{2\tau^2}}}{(2 \pi \tau^2)^{d/2}}$, we define
$y=\frac{1}{n} \sum_{i=1}^n \lambda(\dotp - X_i)$. The term $y$ involved in \eqref{eq:problemPkappa'} corresponds in such a case to a kernel estimator associated with the density $f_0$.

We then introduce a reparameterized version of the target measure, $\mu_\omega^0 = \sum_{j=1}^s \omega_j^0 \delta_{x_j^0}$ with $\omega_j^0=W(x_j^0)a_j^0$, where \begin{equation}\label{eq:def_W}
 W(x)\coloneq \prod_{k=1}^d (2\pi)^{-1/4}(2u_k^2+\tau^2)^{-1/4} \quad \forall \, x=((t_1,\ldots,t_d),(u_1,\ldots,u_d)) \in \R^d \times [u_{\min},+\infty)^d\,.
\end{equation} 
With this notation, one has \[\Psi \mu = \lambda * \Phi \frac{\mu}{W} \quad \forall \, \mu \in \M(\R^d \times [u_{\min},+\infty)^d)\,,\] which makes explicit the connection between the BLASSO problem \eqref{eq:pb_BLASSO_gaussian_kernel_norm} and the recovery of the parameters of a Gaussian mixture model. In this formulation, a solution to \eqref{eq:pb_BLASSO_gaussian_kernel_norm} approximates $\mu_\omega^0$ rather than $\mu^0$. This renormalization is essential for obtaining recovery guarantees \citep{article_giard_decastro_marteau}, as it leads to a kernel that is normalized (\ie, $x \mapsto K_{\rm norm}(x,x)$ is constant).

Note that in this statistical setting, $y = \lambda *  \hat{f}_n$ where $\hat{f}_n=\frac{1}{n}\sum_{i=1}^n \delta_{X_i}$. 
The updates \eqref{eq:EGD}, \eqref{eq:NGD}, \eqref{eq:RGD} admit closed-form expressions when $y$ is of the form $\lambda * \nu$ with $\nu \in \M(\R^d)$ a discrete measure. In this setting, the integrals appearing in the norms and inner products on $\L$ can be computed exactly, and no Monte Carlo approximation is required (the formal computation is given in \texttt{cpgd\_sympy.ipynb}, and the implementation is available in \texttt{methods.py} in \citep{repo_zenodo_cpgd}).

\begin{remark}[Extension to general covariance structures]
We focus our attention on the case where the covariance matrices associated with each component of the mixture are diagonal. These investigations are grounded in the statistical analysis displayed by \citep{article_giard_decastro_marteau} in the same framework. Extending this theoretical analysis to general covariance structures is still an open problem. From a computational perspective, the extension of the CPGD principle to general covariance matrices is not straightforward. It indeed requires to handle a non-diagonal metric tensor. Guaranteeing the positive definiteness of the estimated matrices is also a key point. Such an extension hence goes outside the scope of this paper.
\end{remark}

\subsection{AdaGrad with natural gradient descent}
In Section \ref{section:theoretical_analysis}, theoretical results are presented, when the Riemannian gradient descent with fixed step sizes is used, together with a freezing mechanism (see Algorithm \ref{algo:CPGD_Frozen_Riemannian}).
In practice, the step sizes $\alpha,\beta$ must be chosen sufficiently small to ensure that the updates are well defined (Lemma~\ref{lemma:descent_property}). Moreover, fixing a prior region outside of which the location–scale parameters are frozen might be cumbersome. It requires identifying, at each iteration, the particles that lie outside this region, and selecting $\beta$ sufficiently small to ensure that the iterates remain in $\R^d \times (0,+\infty)^d$.
Finally, computing the geodesic trajectories—even though they are available in closed form—requires more computation than a classical Euclidean gradient descent.

For these reasons, we use the natural gradient descent \eqref{eq:NGD} for our numerical experiments. We stress that the \eqref{eq:NGD} scheme can be considered as an approximation of the Riemannian gradient descent \eqref{eq:RGD} when the step sizes are small. This choice is theoretically justified (see Section \ref{section:optimization_procedure}), and also leads to better results than Euclidean gradient descent in practice.
Moreover, we do not perform any projection or freezing outside a prescribed domain; instead, we retain only the componentwise absolute value of $u_j^k$, noting that the loss $J_W$ is even in $u_{j,l}$, for $1\leq l \leq d$.
Finally, we use AdaGrad \citep{duchi_adagrad} to avoid dealing with step sizes, and to improve convergence. This construction leads to Algorithm \ref{algo:CPGD_ngd_adagrad}.

\begin{algorithm}[H]
\caption{CPGD with NGD and AdaGrad}
\label{algo:CPGD_ngd_adagrad}
\begin{algorithmic}[1]
\Require Numerical stability term $\epsilon>0$, number of iterates $K$, initialization $\mu_\omega^1 = \sum_{j=1}^p \omega_j^1 \delta_{x_j^1}$

\State $v_{\omega_j}^0 \gets 0$, \quad $v_{x_j}^0 \gets 0$ \quad for $j=1,\dots,p$ \Comment{Initialize AdaGrad accumulators}

\For{$k = 1,\ldots K-1$}
    \For{$j = 1,\ldots,p$}
    \State $v_{\omega_j}^{k+1} \gets v_{\omega_j}^k + |J_{\mu_\omega^k}'(x_j^k)|^2$, \quad $v_{x_j}^{k+1} \gets v_{x_j}^k + \frac{1}{2d}\norm{\nabla^{\mathfrak{g}}J_{\mu_\omega^k}'(x_j^k)}_{x_j^k}^2$ \Comment{Update AdaGrad accumulators}
    
      \State 
    $\omega_j^{k+1} \gets \omega_j^k\exp\left(- \frac{J_{\mu_\omega^k}'(x_j^k)}{\sqrt{v_{\omega_j}^{k+1}}+\epsilon}\right)$ \Comment{Weight update}
        
    \State
       $x_j^{k+1}=(t_j^{k+1},u_j^{k+1}) \gets x_j^k - \frac{\nabla^{\mathfrak{g}}J_{\mu_\omega^k}'(x_j^k)}{\sqrt{v_{x_j}^{k+1}}+\epsilon}$ \Comment{Location-scale update}

\State $u_j^{k+1} \gets |u_j^{k+1}|$
\Comment{Componentwise absolute value}
    \EndFor
    \State $\mu_\omega^{k+1} \gets \sum_{j=1}^p \omega_j^{k+1} \delta_{x_j^{k+1}}$
\EndFor
\end{algorithmic}
\end{algorithm}
Note that we do not use separate adaptive steps for each coordinate. Instead, we accumulate the squared norms of the gradients \emph{per particle}. The underlying metric is essential: the Riemannian gradients determine how the relative updates of $(t_1,\ldots,t_d,u_1,\ldots,u_d)$ should be scaled. In practice, the adaptation operates at the level of particles, controlling the relative step sizes between weights and positions, as well as across different particles.

We emphasize that Algorithm \ref{algo:CPGD_ngd_adagrad}, which produces all the numerical results reported below, differs from Algorithm \ref{algo:CPGD_Frozen_Riemannian}, to which the analysis of Section \ref{section:theoretical_analysis} applies: it replaces the Riemannian update \eqref{eq:RGD} by \eqref{eq:NGD}, the fixed step sizes of Lemma \ref{lemma:descent_property} by AdaGrad, and the freezing mechanism by the componentwise absolute value. None of the guarantees of Section \ref{section:theoretical_analysis} therefore covers Algorithm \ref{algo:CPGD_ngd_adagrad}; the experiments below should be read as an empirical study of a practical variant.

\paragraph{Hyperparameters}
The estimation procedure is sensitive to both the smoothing parameter $\tau$ and the regularization parameter $\kappa$.
If $\kappa$ is chosen too large, the weights tend to shrink to zero. Conversely, if $\kappa$ is too small, additional local minima may appear, and some irrelevant particles may persist in the solution.

Regarding $\tau$, excessively large values may deteriorate the accuracy of the estimation when the components are not well separated. In particular, $\tau$ should not be too large relative to $u_{\min}$. On the other hand, if $\tau$ is taken too small, the theoretical bounds governing the estimation become less favorable \citep[Theorem 1.1]{article_giard_decastro_marteau}. Moreover, in certain settings, the estimator returned by CPGD is not robust with respect to $\tau$, as even small variations in this parameter can result in significant changes in estimation quality.

In practice, the parameters $\kappa$ and $\tau$ can be tuned by cross-validation, for instance using a Monte Carlo estimate of the KL divergence on a test sample.

\subsection{Scores}
\label{section:numerical_scores}
We aim to evaluate the recovery of $\mu^0$ achieved by CPGD, by comparing the last estimate $\mu^K$ produced by Algorithm \ref{algo:CPGD_ngd_adagrad} to the ground truth $\mu^0$. 
Since many discrepancy measures require probability measures, we will in fact compare $\mu^0$ with the renormalized version of $\mu^K$ defined as 
\[\mu_{\rm norm}^K = \frac{\mu^K}{\norm{\mu^K}_{\rm TV}} = \sum_{j=1}^p a_{j, \rm norm}^K \delta_{x_j^K}\,, \quad  \text{where} \quad a_{j, \rm norm}^K = \frac{a_j^K}{\sum_{j=1}^p a_j^K} \ \forall j\in \lbrace 1,\dots, p \rbrace.\]

We consider several quantitative criteria to illustrate the discrepancy between the reconstructed measure $\mu_{\rm norm}^K$ and the target $\mu^0$.  First, we compute a Monte–Carlo approximation of the Kullback–Leibler divergence $D_{\rm KL}(\Phi \mu^0, \Phi \mu_{\rm norm}^K)$ between $\Phi\mu^0$ and $\Phi\mu_{\rm norm}^K$. Formally, given a sample $Z_1, \ldots, Z_m \iid \Phi \mu^0$, we use the approximation
\[D_{\rm KL}(\Phi \mu^0, \Phi \mu_{\rm norm}^K) \approx \frac{1}{m} \sum_{i=1}^m \ln\left(\frac{\Phi \mu^0(Z_i)}{\Phi\mu_{\rm norm}^K(Z_i)}\right) \eqcolon S^{\rm KL}\,,\]
with $m=10000$. Second, we compute a Wasserstein-type distance between $\mu^0$ and $\mu_{\rm norm}^K$, defined by \[S =\min_{\substack{\gamma_{ij} \geq 0, \\ \sum_j \gamma_{ij} = a_i^0,  \\ \sum_i \gamma_{ij} = a_{j, \rm norm}^K }} \sum_{ij} \gamma_{ij} d(x_i^0,x_j^K)\,.\]
Several choices of ground distance $d$, independent of $\tau$, can be considered:
\begin{itemize}
    \item The semi-distance defined for $x,x' \in \R^d \times (0,+\infty)^d$ by \[\mathpzc{d}_0(x,x')^2 = \sum_{k=1}^d \frac{(t_k-t_k')^2}{u_k^2+u_k'^2} + \ln\left( \frac{u_k^2 + u_k'^2}{2u_k u_k'}\right)\,.\]
We refer to \eqref{eq:def_semi_distance} or \citep{article_giard_decastro_marteau} for more details on $\mathpzc{d}_0$. Please note that the associated Wasserstein discrepancy $S^\mathpzc{d}$ is only a semi-distance in such a case. 
   \item The Fisher-Rao distance with $\tau=0$, \ie the Poincaré half-plane distance \[\mathfrak{d}_{\mathfrak{g} 0}(x,x')^2 = 2\sum_{k=1}^d  \ln \left( \frac{\sqrt{(t_k-t_k')^2 + (u_k-u_k')^2} + \sqrt{(t_k-t_k')^2 + (u_k+u_k')^2}}{2\sqrt{u_ku_k'}} \right)^2\,.\]
   In such a case, $S^{\mathfrak{d}_\mathfrak{g}}$ is a distance.
    \item The Euclidean distance, for which $S^e$ reduces to the classical Wasserstein–1 distance.
\end{itemize}

In the following, we report results using the scores $S^{\mathpzc{d}}$, $S^{\mathfrak{d}_\mathfrak{g}}$, $S^e$ and $S^{\rm KL}$. 

\subsection{Proof of concept}

We first illustrate the behavior of the algorithm in a favorable setting where successful recovery is expected. More precisely, we consider the one-dimensional case with well-separated components.
The target measure consists of two particles with equal weights and identical scale parameters, but distinct means, namely
\begin{equation}
\label{eq:poc_measure}
\mu^0 = \frac{1}{2} \delta_{(-1,1)} + \frac{1}{2} \delta_{(1,1)}\,.
\end{equation}

\begin{figure}[ht]
    \centering
    \includegraphics[width=0.5\linewidth]{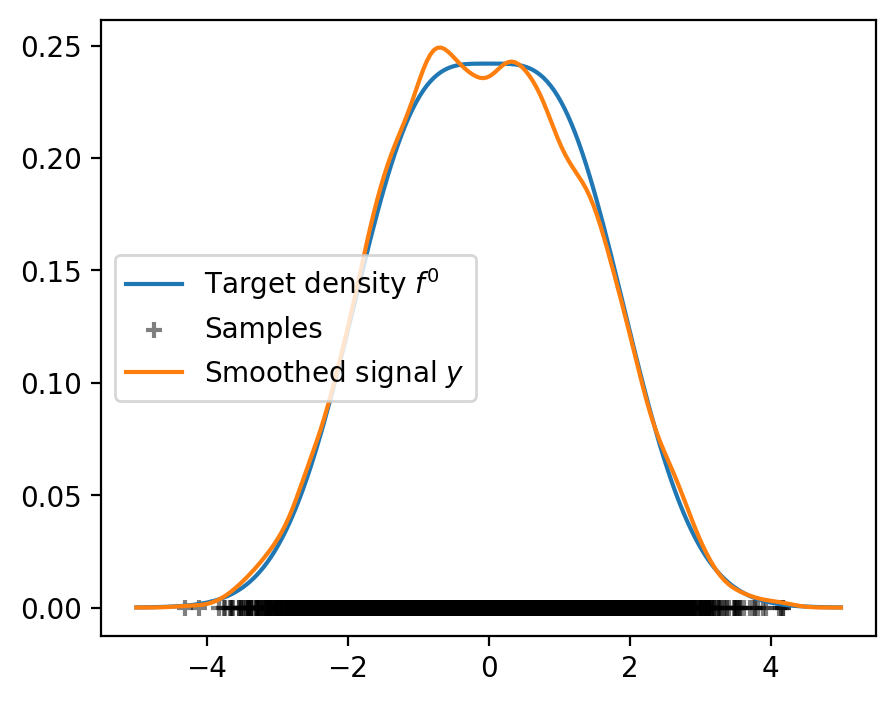}
    \caption{Target mixture and observation $y$ where $\mu^0$ is defined in \eqref{eq:poc_measure}.
    }
    \label{fig:proof_concept_target}
\end{figure}
We consider a sample of size $n=5000$ drawn from the Gaussian mixture distribution associated with $\mu^0$ (Figure \ref{fig:proof_concept_target}). The hyperparameters are set to $\tau=0.2$ and $\kappa = 10^{-5}$. Algorithm~\ref{algo:CPGD_ngd_adagrad} is initialized with three particles and run for 5000 iterations. 

The performance of our approach is illustrated in Figure \ref{fig:proof_concept_iterates}. The first curve, on the left, illustrates the descent behavior. In particular, the loss decreases sharply over the iterations (on a logarithmic scale). Although the overall behavior is not monotone, the loss decreases for the most part before reaching a plateau after approximately $10^2$ iterations.
Starting from three particles at initialization, we observe that two remain significant after a few iterations, while the third is associated with a small final weight (around $0.15$). The plot on the right illustrates the trajectories of the individual particles: one converges to one of the target atoms (located in $(-1,1)$), while the other two particles, including the one associated with a small weight, converge to the same target atom.
Although this issue is beyond the scope of this paper, one could consider introducing a post-processing procedure to merge particles with similar positions, based on an appropriate similarity criterion.

To conclude this discussion, we can notice that our algorithm produces the scores
\[S^{\mathpzc{d}}=0.064\,, \quad S^{\mathfrak{d}_\mathfrak{g}}=0.059 \,, \quad S^e = 0.091\,, \quad \text{and} \quad S^{\rm KL}=0.00088 \,, \]
for this experiment. 

\begin{figure}[ht]
    \centering
    \includegraphics[width=\linewidth]{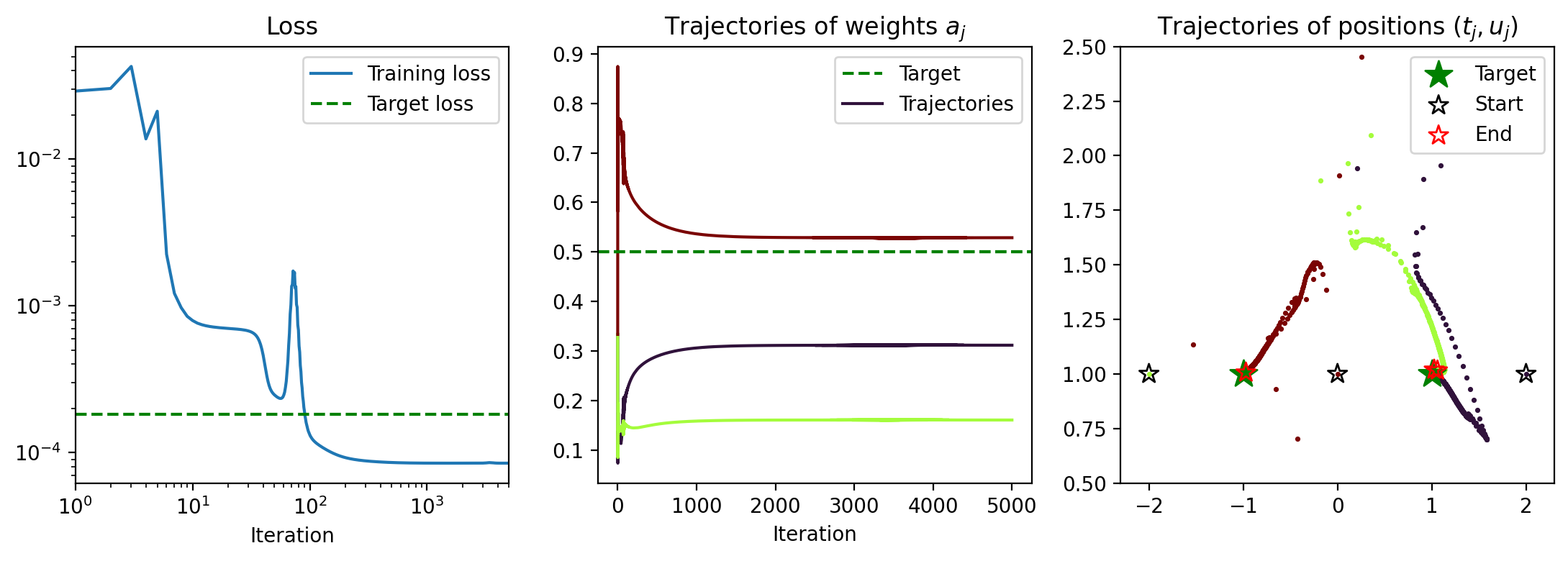}
    \caption{Illustration of the convergence of Algorithm \ref{algo:CPGD_ngd_adagrad}. From left to right: evolution of the loss in terms of the iteration number, trajectories of the weights according to the iteration number and trajectories of the positions (mean $t$ on the x-axis, scale parameter $u$ on the y-axis). Target: $\mu^0$ of \eqref{eq:poc_measure}, $n=5000$, $p=3$ particles, $\tau=0.2$, $\kappa=10^{-5}$, $K=5000$ iterations.
    }
    \label{fig:proof_concept_iterates}
\end{figure}

\subsection{Comparison with EM}
The EM algorithm is the standard approach for estimating Gaussian mixture models and can be considered as a benchmark for numerical comparisons. Specifically, we use here the vanilla diagonal EM algorithm (which estimates only diagonal covariance matrices) as implemented in scikit-learn. No regularization is applied to the covariance estimates.

The comparison displayed in this section is not intended to be exhaustive. It rather illustrates performance in a representative setting. We consider below a two-dimensional problem with three target components sharing the same mean and equal weights, but with different covariance structures. More precisely, our target measure is 
\begin{equation}
\mu^0 = \frac{1}{3} \delta_{(0,0,\sigma_1,\sigma_2)} + \frac{1}{3} \delta_{(0,0,\sigma_2,\sigma_1)} + \frac{1}{3} \delta_{(0,0,\sigma_3,\sigma_3)} \quad \text{with} \: \sigma_1=0.5,\: \sigma_2=2, \: \sigma_3= \sqrt{\sigma_1^2+\sigma_2^2} \,.
\label{eq:cross_measure}
\end{equation} 
The first two components have anisotropic covariances forming a cross-shaped pattern in the observation space, while the third one is isotropic.
We consider a sample of size $n=1000$ drawn from $\Phi \mu^0$.

\begin{figure}[ht]
    \centering
    \includegraphics[width=0.6\linewidth]{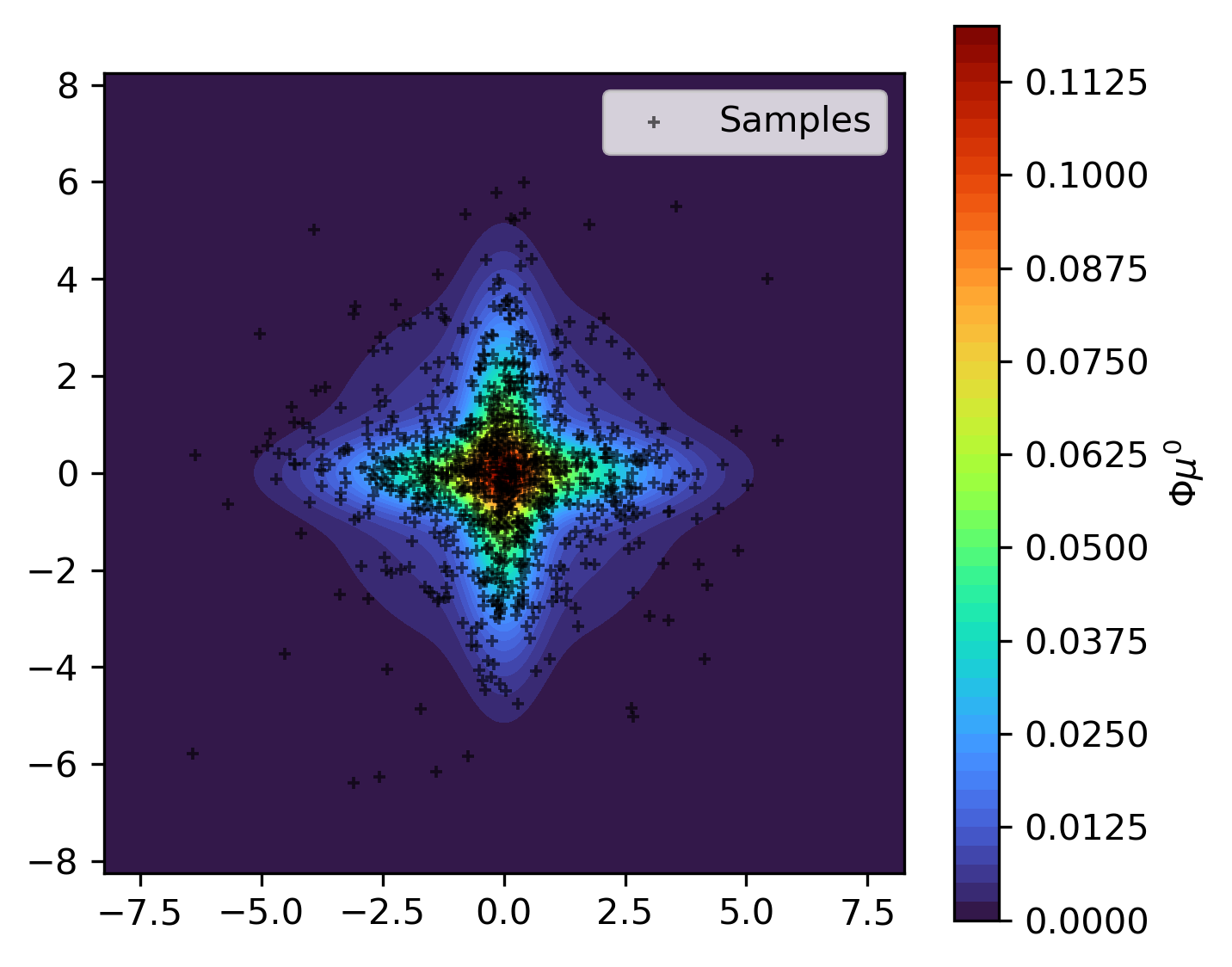}
    \caption{Target mixture and samples when $\mu^0$ is defined according to \eqref{eq:cross_measure}.
    }
    \label{fig:comparison_target}
\end{figure}

In this specific setting, we set $\tau=0.7$, $\kappa = 0.01$ in Algorithm \ref{algo:CPGD_ngd_adagrad}. Both EM and Algorithm \ref{algo:CPGD_ngd_adagrad} are initialized identically with $p$ particles of equal weights, whose means are uniformly distributed on a circle of radius $1$ centered at $(0,0)$, and with identical initial scale parameters $u_j^1 = (1,1)$. Both algorithms are then run for 3000 iterations.

\paragraph{Initialization with 3 particles}
The case $p=3$ is particularly favorable to the EM algorithm, since it is specifically designed for situations in which the number of mixture components is known in advance.

In this setting, both algorithms yield comparable results.
For CPGD (Figure \ref{fig:comparison_cpgd_iterates}), we obtain
\[S^{\mathpzc{d}}=0.11\,, \quad S^{\mathfrak{d}_\mathfrak{g}}=0.11 \,, \quad S^e = 0.28\,, \quad S^{\rm KL}=0.023 \,, \]
while for EM (Figure \ref{fig:comparison_em_iterates}), the corresponding scores are
\[S^{\mathpzc{d}}=0.14\,, \quad S^{\mathfrak{d}_\mathfrak{g}}=0.15 \,, \quad S^e = 0.25\,, \quad S^{\rm KL}=0.0087 \,. \]
The Wasserstein-type scores are of similar magnitude for both methods, whereas the KL divergence is smaller for the EM algorithm.

Concerning the trajectories of the parameter estimates, we also notice globally similar behaviors. Note that the estimated weights for Algorithm \ref{algo:CPGD_ngd_adagrad} are (as expected) smaller than those of the target (Figure \ref{fig:comparison_cpgd_iterates}). This is a consequence of the $\ell^1$ regularization of the weights induced by the total variation norm in the loss.

\begin{figure}[ht]
    \centering
    \includegraphics[width=\linewidth]{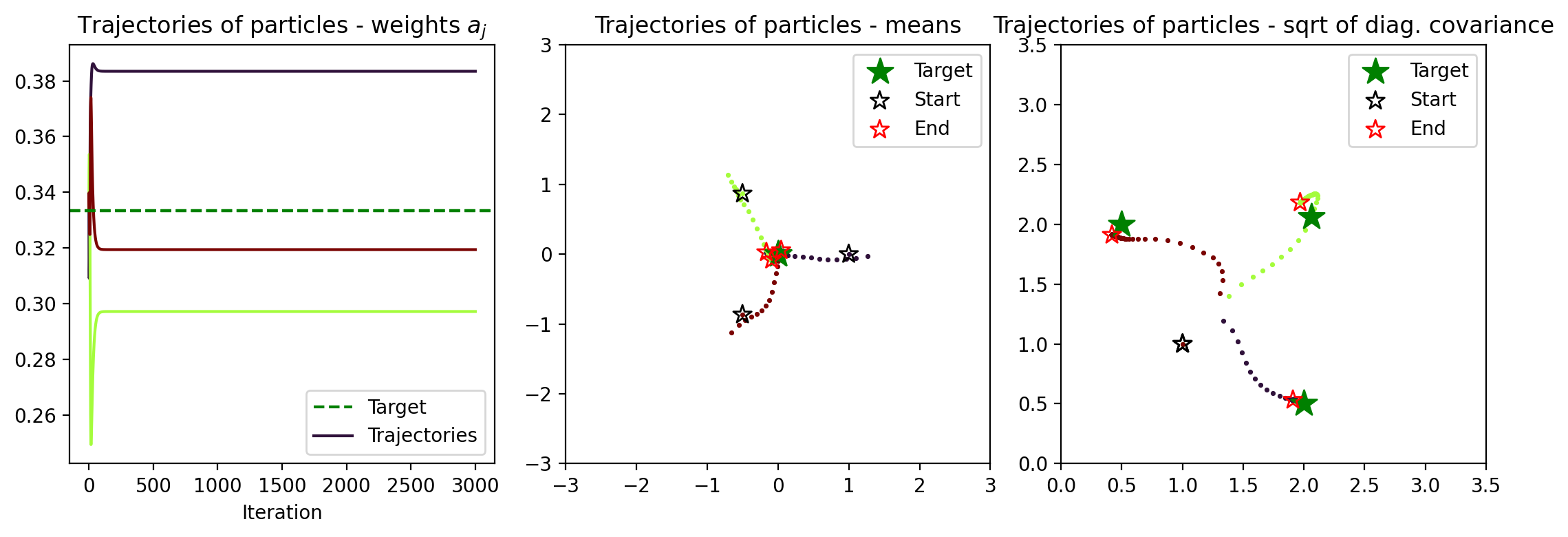}
    \caption{Convergence of the diagonal EM when the target measure is defined in \eqref{eq:cross_measure}. From left to right: evolution of the weights in terms of the iteration number, evolution of the mean along the iteration, evolution of the variance along the iteration.
    }
    \label{fig:comparison_em_iterates}
\end{figure}

\begin{figure}[ht]
    \centering
    \includegraphics[width=\linewidth]{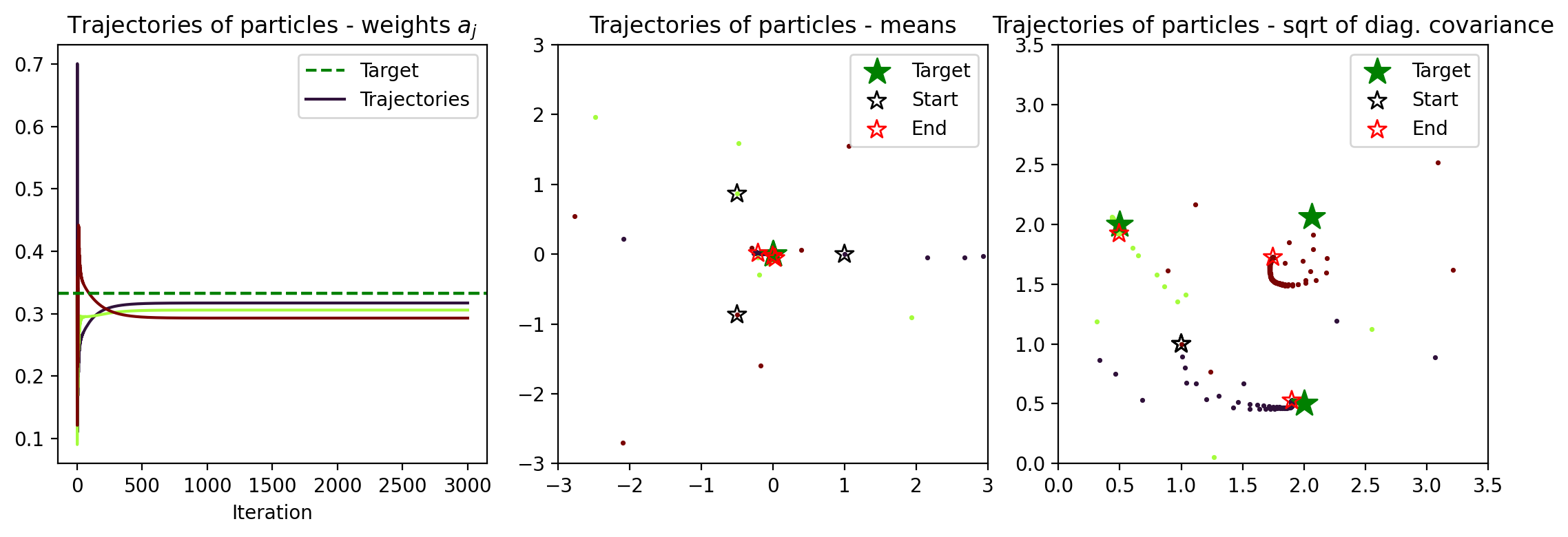}
    \caption{Convergence of Algorithm \ref{algo:CPGD_ngd_adagrad} when the target measure is defined in \eqref{eq:cross_measure}. From left to right: evolution of the weights in terms of the iteration number, evolution of the mean along the iteration, evolution of the variance along the iteration.
    }
    \label{fig:comparison_cpgd_iterates}
\end{figure}

\paragraph{Influence of overparameterization}
To extend these investigations, we considered a similar analysis with several values for the particle number $p$. For the sake of readability, we focus our attention on the evolution of the metrics introduced in Section \ref{section:numerical_scores} in terms of $p$. Corresponding results are displayed in Figure \ref{fig:comparison_scores_w1_number_particles} (Wasserstein) and Figure \ref{fig:comparison_scores_kl_number_particles} (KL). As $p$ increases, Algorithm \ref{algo:CPGD_ngd_adagrad} exhibits stable and consistent behavior, whereas the performance of EM deteriorates. 

The EM algorithm aims to maximize the log-likelihood associated with the observed sample for a mixture model with a prescribed number of $p$ components. It is therefore particularly sensitive to misspecification of the number of components. This is a well-known issue that can be addressed by introducing an additional model-selection step, in which the log-likelihood is penalized by a function of $p$ designed to promote sparsity. We refer, among others, to \citep{maugis2011} for a description and analysis of such a strategy.

In contrast, the CPGD principle offers greater flexibility in the choice of the initial number of components $p$. It is based on the optimization problem \eqref{eq:pb_BLASSO_gaussian_kernel_norm} , which includes a regularization term (the total variation norm) that promotes sparsity of the solution. In the experiments reported here, the accuracy of Algorithm \ref{algo:CPGD_ngd_adagrad} is therefore largely insensitive to the number of particles, provided that $p$ exceeds the number of target components. We stress that this statement concerns the accuracy of the reconstructed measure: the algorithm does not by itself return exactly $s$ particles, and the surviving particles may remain split across a single target atom, as observed in Section \ref{section:numerical_experiments}.

\begin{figure}[ht]
    \centering
    \includegraphics[width=\linewidth]{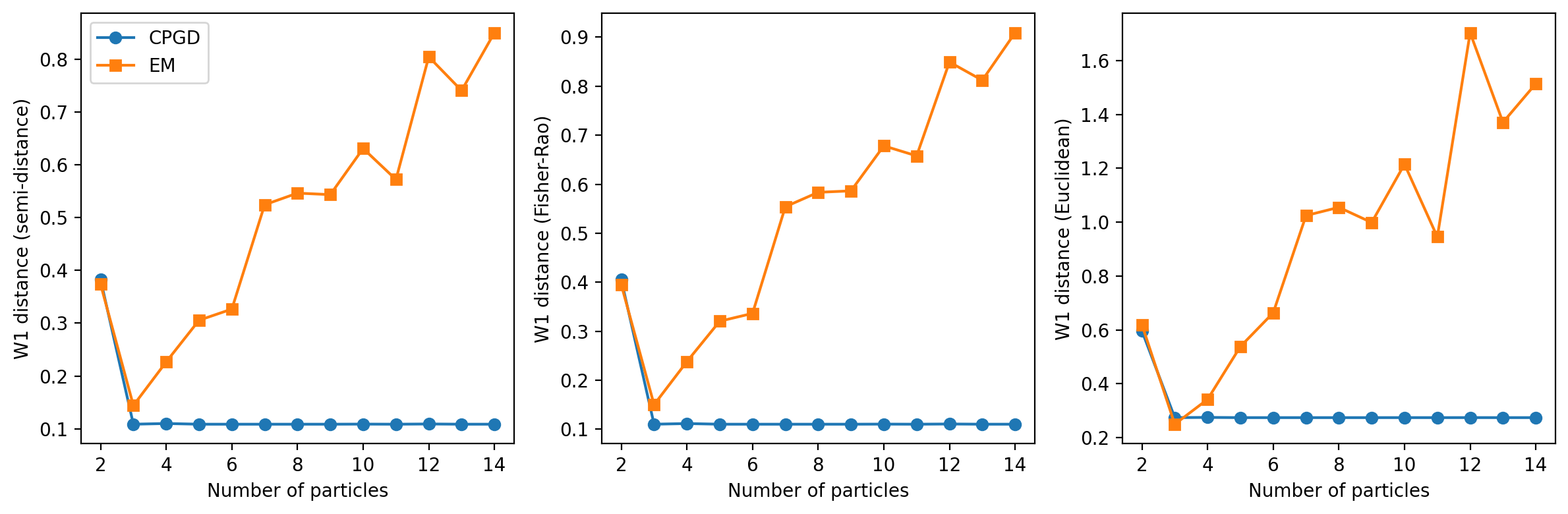}
    \caption{Wasserstein-type scores for EM and Algorithm \ref{algo:CPGD_ngd_adagrad} expressed in terms of $p$.
    }
   \label{fig:comparison_scores_w1_number_particles}
\end{figure}

\begin{figure}[ht]
    \centering
    \includegraphics[width=0.4\linewidth]{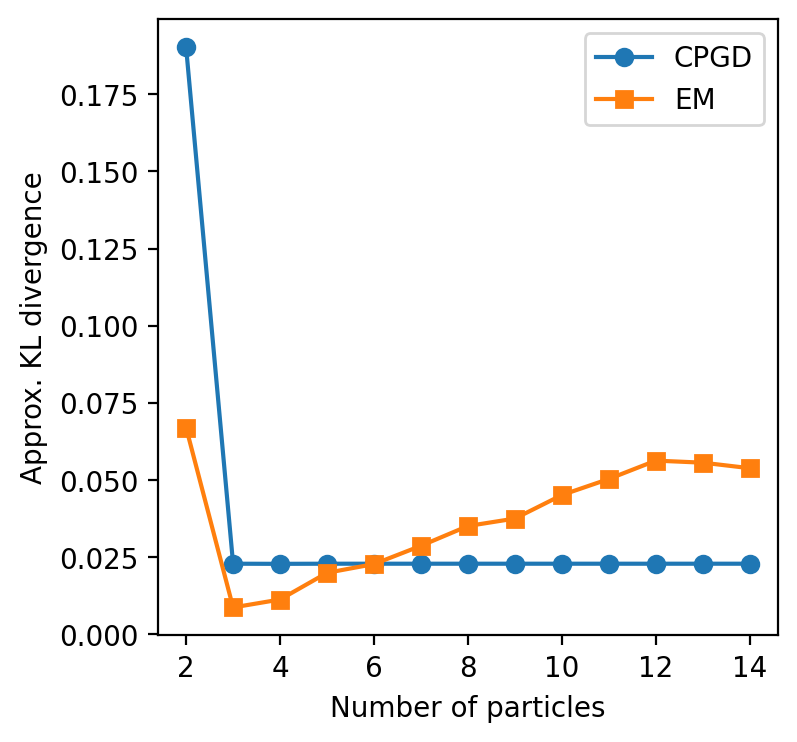}
    \caption{KL divergence for EM and Algorithm \ref{algo:CPGD_ngd_adagrad} expressed in terms  of $p$.
    }
   \label{fig:comparison_scores_kl_number_particles}
\end{figure}

\subsection{Influence of the separation and sample size}
To conclude these numerical investigations, we illustrate the impact of the separation between target components and the sample size on the performance of Algorithm \ref{algo:CPGD_ngd_adagrad}.

\paragraph{Experimental design}
We work in dimension $d=1$. The target is
\[\mu^0 = \frac{1}{2}\delta_{(-1,u^0)}+ \frac{1}{2}\delta_{(1,u^0)}\] with $u^0>0$.
We consider different samples of size $n$ drawn from $\Phi \mu^0$.

We study the convergence of CPGD for various values of $u^0$ and $n$. As $u^0$ gets larger, the separation between components gets smaller (measured by $\mathfrak{d}_\mathfrak{g}$ or $\mathpzc{d}$).

For each pair $(u^0,n)$, Algorithm \ref{algo:CPGD_ngd_adagrad} is run with 10 independently generated samples, and three different initializations. For each sample, we retain the solution achieving the smallest final loss among the initializations. The results are then aggregated over the samples, using the median performance. We get 3 aggregated Wasserstein-type scores depending on the ground distance used in $S$. For each configuration $(u^0,n,d)$ (with $d$ the ground distance), we then select the hyperparameters $\tau,\kappa$ that yield the best (aggregated) score (this choice depend on the score used, via the ground distance). The parameters $\tau$ and $\kappa$ are searched over a grid: $\tau$ ranges from $10^{-5}$ to $100$, and $\kappa$ from $10^{-14}$ to $0.1$. Since this selection uses the score itself, which depends on the ground truth $\mu^0$, the results reported below are an \emph{oracle} envelope. They describe the best accuracy attainable by the method over the hyperparameter grid, not the accuracy a practitioner would obtain from a data-driven choice of $(\tau,\kappa)$ (for instance selecting hyperparameters using an approximation of the KL divergence on a test sample).

\paragraph{Results}
The selected hyperparameters vary significantly with $n$ and $u^0$, and this dependence does not follow a clear or predictable pattern. Still, larger values of $u^0$ tend to correspond to larger $\tau$.

Moreover, the recovery scores themselves exhibit substantial variability across different initializations and sampled observations, indicating that CPGD lacks strong stability in this setting.

Despite this variability, a meaningful trend emerges from the experiment. We report the best median recovery scores, selected over the different initializations and choices of $\tau$ and $\kappa$ (Figure \ref{fig:separation_phase_transition}). 

\begin{figure}[ht]
    \centering
    \includegraphics[width=\linewidth]{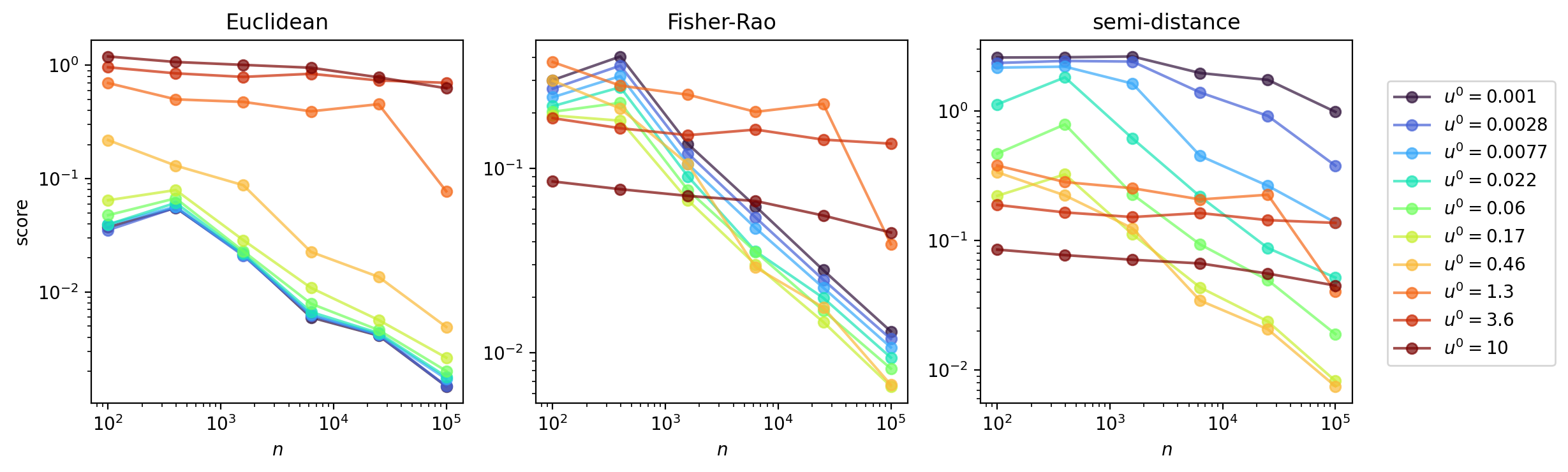}
    \caption{Recovery scores for Algorithm \ref{algo:CPGD_ngd_adagrad} in dimension $d=1$, for the two-component target $\mu^0=\frac{1}{2}\delta_{(-1,u^0)}+\frac{1}{2}\delta_{(1,u^0)}$, as a function of the sample size $n$ (horizontal axis, logarithmic) and of the scale parameter $u^0$ (legend). From left to right: the Wasserstein-type scores $S^e$ (Euclidean ground distance), $S^{\mathfrak{d}_\mathfrak{g}}$ (Fisher-Rao distance) and $S^{\mathpzc{d}}$ (semi-distance) introduced in Section \ref{section:numerical_scores}; lower is better. Each point is the median, over $10$ independently generated samples, of the best score over three initializations, minimized over the grid of hyperparameters $(\tau,\kappa)$. As discussed above, this selection of $(\tau,\kappa)$ uses the score itself, so that the reported values form an oracle envelope. The vertical scales differ across the three panels.
    }
    \label{fig:separation_phase_transition}
\end{figure}

A phase transition phenomenon becomes apparent in the recovery behavior, and it is visible in all three panels of Figure \ref{fig:separation_phase_transition}. The quantity to follow is the \emph{slope} of the curves rather than their level. For the smaller values of $u^0$, that is when the components are well separated relative to their own width, the score decreases steadily with $n$, as expected from a consistent procedure: larger samples improve recovery accuracy.
 
In contrast, beyond a critical scale located around $u^0 \approx 1$—precisely the regime in which the standard deviation of each component becomes comparable to the half-distance between the two means—the curves flatten: CPGD no longer recovers the target measure accurately, and increasing the sample size by three orders of magnitude yields only marginal improvement. The location of this transition is consistent across the Euclidean, Fisher-Rao and semi-distance scores, which suggests that it reflects a genuine difficulty of the recovery problem in this regime rather than an artifact of a particular discrepancy measure.

When interpreting these results, it is important to keep in mind that both the semi-distance and the Fisher–Rao distance tend to yield larger values in regimes characterized by small variances. This effect should be taken into consideration when comparing scores across different configurations. It accounts for the reversal of curve rankings between the Euclidean distance plots and the Fisher–Rao (as well as semi-distance) plots.

\section{Discussion and open problems}\label{section:discussion}
The CPGD algorithm has been studied in a general setting by \citep{cpgd_chizat}, and with stochastic gradient methods in \citep{fastpart}. Its specific
application to the Gaussian mixture problem with unknown covariance matrices has been overlooked. Three aspects of this contribution deserve emphasis.

From a methodological standpoint, unknown diagonal covariances endow the parameter space with a location-scale Fisher-Rao geometry, which we exploit directly in the position update, beyond its role in the analysis. The resulting Riemannian scheme \eqref{eq:RGD} is invariant under any invertible smooth reparameterization of the parameter space, a property that the Euclidean update \eqref{eq:EGD} does not share, and one that becomes meaningful precisely because the metric is no longer constant over $\X$.

On the theoretical side, we establish exponential local convergence with explicit constants for the joint weight-and-position dynamics. The non-degeneracy condition underlying this rate bears on the BLASSO solution, which is not known in advance; Theorem \ref{th:check_assumptions_solution} connects it to an interpretable separation condition on the statistical target, allowing the assumption to be read in terms of the model rather than the optimizer, with an explicit dependence on the regularization parameter $\kappa$.

Numerically, on the test cases considered here, our approach performs on a par with the well-established EM algorithm, with an advantage in robustness to an over-specified number of components and a disadvantage on the Kullback-Leibler score when the number of components is known. The accuracy of the method proves largely insensitive to overparameterization, and the experiments of Section \ref{section:numerical_experiments} exhibit a phase transition governed by the separation between components, consistently across the three ground distances considered.

\medskip
We now turn to the limitations of the approach and to the questions they leave open. We successively discuss the convergence guarantees and their local nature, the estimation of the scale parameters, the gap between the algorithm covered by our analysis and the one used in practice, and the evaluation of the optimality gap along the iterations.

\paragraph{Convergence issues} The discretization of the space of measures destroys convexity, which prevents us from establishing global convergence guarantees. At present, we can only prove the exponential convergence of CPGD when the initialization is close to the solution, provided strong assumptions on the solution (Theorem \ref{th:exponential_cv}). This convexity issue could be bypassed by introducing a spawn-and-prune step to our algorithm (see, \eg, \citep{decastro2026fsp}).

\paragraph{Scale parameter estimation} Estimating the scale parameters of the components requires handling projection strategies. When using natural gradient descent, one can exploit the symmetry of $J_W$ with respect to $u_j$ and apply the simple transformation $u_j \mapsto |u_j|$. 
Alternatively, to enforce the nonnegativity of $u_j$, one may take advantage of the invariance of the Fisher-Rao metric under reparameterization. For instance, we could consider the parameterization $\ln u$, or $\ln(u-u_{\min})$, and use the associated natural gradient descent. The related covariance updates are then \[u_j^{k+1} = (u_j^k-u_{\min})\exp\left( -\beta \frac{(2(u_j^k)^2 + \tau^2)^2 }{(u_j^k - u_{\min}) 2(u_j^k)^2 }\partial_{u_j} J'(x_j^k)\right)+u_{\min}\,.\] 
This issue becomes more challenging when using Riemannian gradient descent, as the exponential map is not globally defined. Freezing particles (see Lemma \ref{lemma:descent_property}) provides a theoretical workaround, but it is not easily implementable in practice.

\paragraph{Optimality gap} A natural direction for further investigation is to exploit the KKT conditions associated with the first variation $J'$ (Section \ref{section:optimization_procedure}).
Formally, let $\mu_\omega^k$ denote an approximation of the minimizer $\mu_\omega^\star$. The function $J_{\mu_\omega^k}'$ can then be used to derive an upper bound on the optimality gap $J_W(\mu_\omega^k)-J_W(\mu_\omega^\star)$, which is generally sharper than the trivial bound $J_W(\mu_\omega^k)$ obtained from the nonnegativity of $J_W$.
More precisely, assume that $\alpha>0$ verifies \begin{equation} \label{eq:condition_alpha_Jp}
\alpha J_{\mu_\omega^k}'(x) \geq (\alpha-1) \kappa \quad \forall \,x\in \X\,.\end{equation} Setting $h=\alpha(\Psi \mu_\omega^k -y)$, we get
\begin{equation}\label{eq:bound_optimality_gap}
    J_W(\mu_\omega^k)-J_W(\mu_\omega^\star) \leq J_W(\mu_\omega^k)+\frac{1}{2}\norm{h}_\L^2 + \innerprod{h}{y}_\L  \,.
\end{equation} We refer to Appendix \ref{section:proof_optimality_gap} for the proof. Observe that the case $\alpha=1$ corresponds exactly to the KKT feasibility condition $J_{\mu_\omega^k}'\geq 0$.

In practice, to determine $\alpha$ satisfying the condition displayed in \eqref{eq:condition_alpha_Jp}, one may compute either an approximation $\ubar{J'}$ of $\inf_{x \in \X} J_{\mu_\omega^k}'(x)$, or a rigorous lower bound, for instance $-\norm{\Psi \mu_\omega^k - y}_\L + \kappa$. 
A valid choice is then $\alpha = \frac{\kappa}{-\min \{\ubar{J'}\,, \,0 \}+\kappa}$, which is the \emph{largest} value of $\alpha$ ensuring the above condition. Indeed, \eqref{eq:condition_alpha_Jp} bounds $\alpha$ from above, while \eqref{eq:bound_optimality_gap} degenerates to the trivial bound $J_W(\mu_\omega^k)$ when $\alpha=0$: the bound is therefore all the more informative as $\alpha$ is large. When no estimate $\ubar{J'}$ is available, the rigorous fallback $\alpha= \frac{\kappa}{\max\{ \norm{\Psi \mu_\omega^k - y}_\L \,, \, \kappa \}}$ of Appendix \ref{section:proof_optimality_gap} is always admissible.

This construction is an instance of a general duality-gap mechanism, which clarifies both its scope and its limitations. Writing the objective as $F(\Psi\mu)+\kappa R(\mu)$ with $F=\frac{1}{2}\norm{\dotp-y}_\L^2$ and $R=\norm{\dotp}_{\rm TV}$, \citep{aubin_decastro_fygap} establishes the exact identity
\[
\Delta(\mu,h)=L_F(\Psi\mu \parallel h)+\kappa\, L_R(\mu \parallel \eta)\,, \qquad \eta=-\Psi^\star h/\kappa\,,
\]
valid for \emph{any} primal $\mu$ and \emph{any} dual $h$, in which $L_F$ and $L_R$ are the Fenchel-Young losses of $F$ and $R$. Evaluated at a dual-feasible $h$, the left-hand side is computable without any knowledge of $\mu_\omega^\star$ and upper bounds the suboptimality of $\mu$: the bound \eqref{eq:bound_optimality_gap} is exactly $\Delta(\mu_\omega^k,h)$ for the choice $h=\alpha(\Psi\mu_\omega^k-y)$, and condition \eqref{eq:condition_alpha_Jp} is precisely the requirement that this $h$ be dual-feasible.

This reading explains the role played by $\alpha$. For the quadratic fidelity $F$, the data-fidelity loss $L_F(\Psi\mu\parallel h)$ vanishes exactly at $h=\nabla F(\Psi\mu)=\Psi\mu-y$, that is, at $\alpha=1$; the whole gap then reduces to the regularizer term. Shrinking $\alpha$ below $1$ is what restores dual feasibility when $J_{\mu_\omega^k}'$ takes negative values, but it moves $h$ away from that alignment and inflates $L_F$. The bound is therefore informative only when $\alpha$ can be taken close to $1$, which requires $\inf_\X J_{\mu_\omega^k}'$ to be close to nonnegative. Computing an informative lower bound $\ubar{J'}$ is, in this respect, the practical bottleneck.

Preliminary experiments are consistent with this analysis. On the setting of Section \ref{section:numerical_experiments}, the bound \eqref{eq:bound_optimality_gap} evaluated at the output of Algorithm \ref{algo:CPGD_ngd_adagrad} remains close to the trivial bound $J_W(\mu_\omega^K)$, and becomes informative only when the components are well separated and $\kappa$ is taken large enough—a regime that in turn requires more particles at initialization. Larger values of $\kappa$ also relax \eqref{eq:condition_alpha_Jp} and thus allow $\alpha$ closer to $1$, so this improvement is not merely an artifact of the term $\kappa\norm{\mu}_{\rm TV}$ in the objective. We do not report these experiments here.

A comprehensive study of this duality gap, including the construction of certified dual-feasible points and the resulting early-stopping rules, is carried out in \citep{aubin_decastro_fygap}. In particular, a constructive form of the Br\o ndsted-Rockafellar theorem turns an approximately feasible pair into a dual-feasible one at controlled distance, which removes the need to compute $\ubar{J'}$ exactly and makes the stopping criterion certified. The Beurling-LASSO with quadratic fidelity considered here is a special case of the Generalized Beurling-LASSO studied therein, so the questions raised in this section fall squarely within the scope of that work.

\newpage
\printbibliography

\newpage

\appendix
{
\begin{center}
\LARGE
    Riemannian Gradient Descent for Gaussian Mixture Models with unknown diagonal covariances\\ --- \\ Appendix
\end{center}
\bigskip
{\ }
}
\section{Technical lemmas and definitions}\label{section:technical_lemmas_and_def}
\subsection{Geodesics, distances}\label{section:geodesics_and_distances}
\paragraph{Compatibility with the semi-distance}
For $A \subset \R^d \times [u_{\min},+\infty)^d$ and $x_0\in A$, we define the set containing all points from geodesics connecting $x_0$ with points of $A$, \[\mathcal{G}_{x_0}(A)\coloneq \{\gamma(y) \: : \: \gamma \text{ is a Fisher-Rao geodesic}\,, \: y\in [0,1]\,, \: \gamma(0)=x_0\, ,\gamma(1)\in A\}\,.\]
For $R>0$ and $x \in \R^d \times [u_{\min},+\infty)^d$, we write 
\[B_{\mathpzc{d}}(x,R) = \{x'\in\R^d \times [u_{\min},+\infty)^d \: : \: \mathpzc{d}(x,x') \leq R\}\,.\]
From \citep[Lemma 5.2]{article_giard_decastro_marteau}, we know that for $r>0$ and $x_0 \in \R^d \times [u_{\min},+\infty)^d$, it holds that $\mathcal{G}_{x_0}(B_{\mathpzc{d}}(x_0,r)) \subset B_{\mathpzc{d}}(x_0,r)$ (the semi-distance is increasing on geodesics, \ie $y \in [0,1] \mapsto \mathpzc{d}(x_0,\gamma_{x_0 \to x}(y))$ is increasing for any $x \in \R^d \times [u_{\min},+\infty)^d$).

\paragraph{Fisher-Rao distance}
\citep[Lemmas H.2 and H.3]{article_giard_decastro_marteau} give a closed-form expression for $\mathfrak{d}_\mathfrak{g}$. This expression depends on $\tau$.

\subsection{Riemannian derivatives}
For the sake of convenience, we reproduce definitions and properties from \citep{article_giard_decastro_marteau}.

The non-zero Christoffel symbols associated with $\mathfrak{g}$ are
\begin{align*}
 \Gamma^{t_k}{}_{u_k t_k}=\Gamma^{t_k}{}_{t_k u_k}&=\frac{-2u_k}{2u_k^2+\tau^2}\,,\\
 \Gamma^{u_k}{}_{t_k t_k}&=\frac{1}{u_k} \,,\\
 \Gamma^{u_k}{}_{u_k u_k}&=\frac{\tau^2-2u_k^2}{u_k(2u_k^2+\tau^2)} 
\end{align*}
with $k= 1, \ldots,d$.
We define \begin{equation}\label{eq:christoffel_symbols}\Gamma^{t_k}=\begin{pmatrix}
 (\Gamma^{t_k}{}_{t_l t_m})_{1\leq l,m\leq d} & (\Gamma^{t_k}{}_{t_l u_m})_{1\leq l,m\leq d}
 \\ (\Gamma^{t_k}{}_{u_l t_m})_{1\leq l,m\leq d} & (\Gamma^{t_k}{}_{u_l u_m})_{1\leq l,m\leq d}
\end{pmatrix} \quad \text{and} \quad \Gamma^{u_k}=\begin{pmatrix}
 (\Gamma^{u_k}{}_{t_l t_m})_{1\leq l,m\leq d} & (\Gamma^{u_k}{}_{t_l u_m})_{1\leq l,m\leq d}
 \\ (\Gamma^{u_k}{}_{u_l t_m})_{1\leq l,m\leq d} & (\Gamma^{u_k}{}_{u_l u_m})_{1\leq l,m\leq d}
\end{pmatrix} \,.\end{equation}

\begin{definition}[Covariant derivatives, associated norms]\label{def:norm_covariant_derivatives}
Let $\psi \in \C^2(\R^d \times [u_{\min}, +\infty)^d)$. Let $x,x' \in \R^d \times [u_{\min}, +\infty)^d$.
\\\underline{Covariant derivatives:} Let $v,v' \in \R^{2d}$.
We define \[D_0[\psi](x)=\psi(x)\,, \quad D_1[\psi](x)[v]=v^T \nabla \psi(x)=\innerprod{v}{\nabla^{\mathfrak{g}} \psi(x)}_x\,, \quad D_2[\psi](x)[v,v'] = v^T H^{\mathfrak{g}} \psi(x) v'\,.\]
\underline{Operator norms:} For $j\in\{0,1,2\}$, we define the operator norm \[\norm{D_j[\psi](x)}_x \coloneq \sup_{\substack{V=[v_1,\ldots,v_j]\in (\R^{2d})^j \\ \forall l=1,\ldots,j\,, \: \norm{v_l}_x \leq 1}} D_j[\psi](x)[V]\,.\]
\underline{Kernel derivatives and associated operators:} Let $i,j\in\{0,1,2\}$.
We define the covariant derivative of the kernel of order $i$ with respect to the first variable $x$ and of order $j$ with respect to the second variable $x'$ by \[[Q]K_{\rm norm}^{(ij)}(x,x')[V]=\innerprod{D_i[\Psi](x)[Q]}{D_j[\Psi](x')[V]}_\L \quad \forall\, Q \in (\R^{2d})^i\,, V \in (\R^{2d})^j\,.\]
The associated operator norm is \[\norm{K_{\rm norm}^{(ij)}(x,x')}_{x,x'}\coloneq \sup_{\substack{Q=[Q_1,\ldots,Q_i]\in (\R^{2d})^i\,,\,V=[V_1,\ldots,V_j]\in (\R^{2d})^j\\ \forall l\, \norm{Q_l}_x\,, \, \norm{V_l}_{x'} \leq 1}} [Q]K_{\rm norm}^{(ij)}(x,x')[V] \,.\]
\end{definition}
Simplified expressions for the operator norms are given in \citep[Lemma J.1]{article_giard_decastro_marteau}. Note in particular that $\norm{\nabla^\mathfrak{g} \psi(x)}_{x} = \norm{\mathfrak{g}_{x}^{-1/2} \nabla \psi(x)}_2$.

We use global controls on the covariant derivatives of the kernel, and also on ``euclidean-flavored'' counterparts, as well as on the local curvature.

In the following lemma and its proof, we adopt the notation introduced in the proof of \citep[Lemma J.2]{article_giard_decastro_marteau}. In particular, we use the decomposition of the kernel as a product:
$K_{\rm norm} (x,x')=\prod_{k=1}^d K_{\rm norm} (x_k,x_k')$ where, by abuse of notation, $K_{\rm norm} (x_k,x_k')$ denotes the one-dimensional kernel evaluated at $x_k=(t_k,u_k)$ and $x_k'=(t_k',u_k')$. The notation $b_k$ stands for either $t_k$ or $u_k$, and $\mathfrak{g}_{b_k b_k} = \partial{b_k} \partial_{b_k'} K_{\rm norm}(x_k, x_k)$ denotes a diagonal entry of the metric tensor.

Note that any derivative of $K_{\rm norm}$ belongs to $\L$, and that for all $x, x' \in \R^d \times [u_{\min}, +\infty)^d$, one has (see \citep[Remark 5.1]{article_giard_decastro_marteau}) $\innerprod{\partial_1 \Psi \delta_x}{\partial_2 \Psi \delta_{x'}}_\L = \partial_1 \partial_2 K_{\rm norm}(x,x')$, where $\partial_1$ (resp.\ $\partial_2$) denotes differentiation with respect to $x$ (resp.\ $x'$).

Finally, the notation $H^\mathfrak{g}\partial_{b_k} \Psi \delta_x$ stands for $H^\mathfrak{g}\left(x \mapsto \partial_{b_k} \Psi \delta_x\right)$.
where $\partial_1$ (resp.\ $\partial_2$) denotes here any derivative \wrt $x$ (resp.\ $x'$).
\begin{lemma}[Controls on the kernel]\label{lemma:controls_kernel} Assume that $\tau \leq u_{\min}$.
\\\underline{Global controls on the covariant derivatives:}
For $i,j \in \{0,1,2\}$, we have \[\underset{x,x' \in \R^d\times [u_{\min},+\infty)^d}{\sup} \norm{K_{\rm norm}^{(ij)}(x,x')}_{x,x'} \leq B_{ij}\]
 with
 \begin{align*}
 B_{00}&=1 \,,\\
 B_{10}=B_{01}&= \sqrt{2d} \,, \\
 B_{11}&=2d\,, \\
 B_{02}=B_{20}&=\sqrt{4d^2+10d} \,, \\
 B_{12}=B_{21}&= \sqrt{2d}B_{02}\,, \\
 B_{22}&=28d^2 \,.
 \end{align*}
\underline{Global controls on the ``euclidean-flavored'' derivatives:}
 We also have \[\sup_{x,x'\in \R^d \times [u_{\min},+\infty)^d]} \norm{\mathfrak{g}_x^{-1/2} \nabla_1^2  K_{\rm norm} (x,x') \mathfrak{g}_{x}^{-1/2}}_2 \leq 6d \eqcolon B_{02}^e \] and
 \[2d \sup_{\substack{x\in \R^d \times [u_{\min},+\infty)^d, \\ b_k \in \{t_k,u_k\}}}\sup_{\norm{v_1}_2\leq 1, \norm{v_2}_2\leq 1} \norm{v_1^T \mathfrak{g}_{b_k b_k}^{-1/2} \mathfrak{g}_{x}^{-1/2} H^\mathfrak{g}\partial_{b_k} \Psi \delta_x \mathfrak{g}_{x}^{-1/2} v_2}_\L^2 \leq 1554 d^3 \eqcolon B_{33}^e\,. \]
 \underline{Local curvature:} Let $x,x' \in \R^d \times [u_{\min},+\infty)^d$. If $\mathpzc{d}(x,x')\leq \frac{0.3025}{\sqrt{d}}$, then \[-K^{(20)}(x,x')[v,v] \geq \bar \varepsilon_2 \norm{v}_{x}^2\quad \forall \: v\in \R^{2d}\,, \quad \text{with } \bar\varepsilon_2=0.13139\,. \]
\end{lemma}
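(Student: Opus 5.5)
The plan is to exploit the tensor-product structure $K_{\rm norm}(x,x')=\prod_{k=1}^d K_{\rm norm}(x_k,x_k')$ together with the diagonal form of $\mathfrak{g}_x$ and of the Christoffel symbols. Every Riemannian derivative then involves only one or two coordinates at a time. Most of the statement is already available in \citep{article_giard_decastro_marteau}. The global bounds $B_{ij}$ on $\norm{K_{\rm norm}^{(ij)}(x,x')}_{x,x'}$, and the local curvature bound with radius $\frac{0.3025}{\sqrt d}$ and $\bar\varepsilon_2=0.13139$, are taken from \citep[Lemma J.2, Theorem 5.3 and Lemma 5.4]{article_giard_decastro_marteau} (under $\tau\le u_{\min}$), so I will simply cite them. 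What genuinely remains are the two ``euclidean-flavored'' bounds $B_{02}^e$ and $B_{33}^e$.

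For $B_{02}^e$, I will compute $\nabla_1^2 K_{\rm norm}(x,x')$ from the product form. Diagonal blocks of coordinate $k$ are $\partial_{b_k}\partial_{b'_k}K_k\cdot\prod_{l\neq k}K_l$. Off-diagonal entries between coordinates $k\neq l$ are products $\partial_{b_k}K_k\,\partial_{b_l}K_l\prod_{m\ne k,l}K_m$. Each factor is bounded by $K_m\le 1$.

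I then need one-dimensional bounds, uniform in $x,x'$, on $\mathfrak{g}_{b_kb_k}^{-1/2}|\partial_{b_k}K_k|/\sqrt{K_k}$ and $\mathfrak{g}^{-1/2}|\partial^2K_k|\mathfrak{g}^{-1/2}$. These come from writing $\partial K_k=K_k\,\partial\ln K_k$, where $\ln K_k$ is explicit (quadratic in $t_k-t_k'$ plus logs in $u$). After normalizing by $\mathfrak{g}^{-1/2}$, each quantity becomes a function of the variables $s=(t-t')/\sqrt{u^2+u'^2+\tau^2}$ and $r=(2u^2+\tau^2)/(u^2+u'^2+\tau^2)\in(0,2)$, times $e^{-s^2/2}$. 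Maximizing polynomial$\times$Gaussian expressions gives numerical constants.

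Summing the off-diagonal contributions with a Frobenius/Cauchy–Schwarz argument (the matrix is a rank-one-like outer product of the normalized gradient plus a block diagonal) yields a bound linear in $d$. I aim for $\norm{\cdot}_2\le \max_k(\text{diag})+\norm{\mathfrak{g}^{-1/2}\nabla K}_2^2/K\lesssim 6d$.

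For $B_{33}^e$, I will use the $\L$ inner product to turn $\norm{\cdot}_\L^2$ into a kernel derivative evaluated on the diagonal. Concretely, $\norm{v_1^T A\,H^\mathfrak{g}\partial_{b_k}\Psi\delta_x\,A v_2}_\L^2$ equals a sixth-order mixed derivative of $K_{\rm norm}(x,x')$ at $x'=x$, involving three derivatives in each variable with Christoffel corrections.

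At $x=x'$ the one-dimensional kernel derivatives reduce to explicit rational functions of $u_k$ and $\tau$. The metric normalization $\mathfrak{g}^{-1/2}$ cancels the scale, leaving functions of $\rho=\tau^2/(2u_k^2)\in(0,1/2]$ (using $\tau\le u_{\min}$) that can be bounded on that interval. I then count index configurations: one, two or three distinct coordinates among $(b_k,\cdot,\cdot)$. This gives terms of orders $d^0,d,d^2$; after multiplication by $2d$ they produce the $d^3$ total, which I will bound by $1554d^3$.

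The main obstacle is this last bookkeeping. Correctly expanding $H^\mathfrak{g}$ with the Christoffel symbols, and tracking the cross-coordinate terms so that the constant actually closes, is error-prone. I would first verify the one-dimensional diagonal quantities symbolically, as in the authors' sympy notebook, and only then assemble the $d$-dependence with crude bounds.
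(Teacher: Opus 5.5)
Your plan works. For the imported bounds ($B_{ij}$ and the local curvature) and for $B_{33}^e$, it is the paper's argument. The paper also turns each $\L$-norm into a sixth-order derivative of $K_{\rm norm}$ on the diagonal. After normalization this depends only on $\tau^2/u_k^2$, and the paper splits the cases by how many of $b_k,b_i,b_j$ share a dimension: the bounds are $7$, $9$, and at most $194.25$ (from the $(u_k,u_k,u_k)$ entry). It aggregates via $\norm{v}_1\le\sqrt{2d}\norm{v}_2$, giving $(2d)^3\cdot 194.25=1554d^3$. Your Hilbert--Schmidt sum $\sum_{i,j}\norm{M_{ij}}_\L^2$ is slightly sharper and also closes. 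One bookkeeping point: there are six single-dimension configurations, one more than the paper lists. The extra one is $\partial_{t_kt_ku_k}\Psi\delta_x-\frac{1}{u_k}\partial_{u_ku_k}\Psi\delta_x$, which normalizes to $9+4(1+\rho)+4(1+\rho)^2\le 24$, so the maximum is unchanged.

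The real difference is $B_{02}^e$. The paper never leaves the diagonal. It uses $\norm{M}_2\le 2d\max_{i,j}|M_{ij}|$ and bounds each normalized entry by Cauchy--Schwarz in $\L$:
$|\partial_b\partial_cK_{\rm norm}(x,x')|=|\innerprod{\partial_b\partial_c\Psi\delta_x}{\Psi\delta_{x'}}_\L|\le\sqrt{\partial_b\partial_c\partial_{b'}\partial_{c'}K_{\rm norm}(x,x)}$.
Cross-dimension entries are products of normalized first derivatives, each bounded by $1$ the same way. This gives $\sqrt3$, $\sqrt5$ and $\sqrt{9-2\tau^2/u_k^2+\tau^4/(2u_k^4)}\le 3$, hence $6d$, with no optimization over $x'$ at all. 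You instead optimize explicit polynomial-times-Gaussian expressions off the diagonal and use the rank-one structure of the cross-dimension part. That costs more computation, but it yields a sharper bound of the form $c_0+c_1d$ with $c_1$ just under $3$, because the entrywise bound throws that structure away.

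Three small corrections:
\begin{itemize}
\item The $u_ku_k$ entry of the one-dimensional normalized Hessian depends on $\rho=\tau^2/(2u_k^2)$ as well as on $(s,r)$. That is exactly where $\tau\le u_{\min}$ is used.
\item The diagonal blocks are second derivatives in the first argument. In the paper's notation, $\partial_{b_k}\partial_{b_k'}$ denotes the mixed derivative, so write them differently.
\item Keep the full one-dimensional Hessian of $K_k$ in the diagonal blocks, as you describe. If you split instead as $K(\nabla^2\ln K+\nabla\ln K\,\nabla\ln K^T)$, you lose a cancellation. The two separate suprema (roughly $3.3$ and $2.9$) then sum to more than $6$ when $d=1$. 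With your split, $6d$ is reached.
\end{itemize}
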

\begin{proof}
    The bounds $B_{00}, B_{10},  B_{11},  B_{02},  B_{12}$ come from the global controls of the covariant derivatives given in \citep[Lemma J.2]{article_giard_decastro_marteau}. The bound $B_{22}$ can be found in the proof of \citep[Lemma K.1]{article_giard_decastro_marteau}.
To derive the global bounds $B_{02}^e$ and $B_{33}^e$, we follow the strategy used in the proof of \citep[Lemma~J.2]{article_giard_decastro_marteau}.
\\\underline{$B_{02}^e$:} We refer to \texttt{cpgd\_sympy.ipynb} in \citep{repo_zenodo_cpgd} for the formal computations used to derive the bounds presented here. 

Using that the metric tensor $\mathfrak{g}_x$ is diagonal, we have
    \begin{align*}
        \sup_{x,x'\in \R^d \times [u_{\min},+\infty)^d]} \norm{\mathfrak{g}_x^{-1/2} \nabla_1^2  K_{\rm norm} (x,x') \mathfrak{g}_{x'}^{-1/2}}_2 &\leq 2d \sup_{b_k,b_l',x,x'\in \R^d \times [u_{\min},+\infty)^d]} \mathfrak{g}_{b_k b_k}^{-1/2} \mathfrak{g}_{b_l b_l}^{-1/2}|\partial_{b_k b_l}  K_{\rm norm} (x,x') |\,.
    \end{align*}
If $b_k\in \{t_k,u_k\}$ and $b_l\in \{t_l,u_l\}$ with $k\neq l$ (\ie the derivatives are not taken in the same dimension), then 
\begin{align*}
\mathfrak{g}_{b_k b_k}^{-1/2} \mathfrak{g}_{b_l b_l}^{-1/2}|\partial_{b_k b_l}  K_{\rm norm} (x,x') | &\leq \mathfrak{g}_{b_k b_k}^{-1/2} |\partial_{b_k}  K_{\rm norm} (x_k,x_k) | \mathfrak{g}_{b_l b_l}^{-1/2}|\partial_{b_l}  K_{\rm norm} (x_l,x_l) | \,, \\
&\leq 1
\end{align*}   
as shown in the proof of \citep[Lemma J.2]{article_giard_decastro_marteau} when deriving $B_{10}$. If $b_k=b_l=t_k$, 
\begin{align*}
\mathfrak{g}_{t_k t_k}^{-1}|\partial_{t_k t_k}  K_{\rm norm} (x,x') | &\leq \mathfrak{g}_{t_k t_k}^{-1}\sqrt{\partial_{t_k t_k}\partial_{t_k' t_k'}  K_{\rm norm} (x_k,x_k)} \,, \\
&= \sqrt{3} \,.
\end{align*}
With the same approach, we obtain \[\mathfrak{g}_{t_k t_k}^{-1/2} \mathfrak{g}_{u_k u_k}^{-1/2}|\partial_{t_k u_k}  K_{\rm norm} (x,x') | \leq \sqrt{5}\]
and \[\mathfrak{g}_{u_k u_k}^{-1}|\partial_{u_k u_k}  K_{\rm norm} (x,x')| = \sqrt{9 - \frac{2\tau^2}{u_k^2} + \frac{\tau^4}{2u_k^4}} \leq 3\,,\] where we used that $\tau \leq u_k$.
So we can take $B_{02}^e = 6d$.
\\\underline{$B_{33}^e$:} We have
\begin{align*}
  2d &\sup_{\substack{x\in \R^d \times [u_{\min},+\infty)^d, \\ b_k \in \{t_k,u_k\}}}\sup_{\norm{v_1}_2\leq 1, \norm{v_2}_2\leq 1} \norm{v_1^T \mathfrak{g}_{b_k b_k}^{-1/2} \mathfrak{g}_{x}^{-1/2} H^\mathfrak{g}\partial_{b_k} \Psi \delta_x \mathfrak{g}_{x}^{-1/2} v_2}_\L^2 \\
  &\leq (2d)^3 \sup_{x\in \R^d \times [u_{\min},+\infty)^d}\sup_{b_k, b_i, b_j}  \mathfrak{g}_{b_k b_k}^{-1} \mathfrak{g}_{b_i b_i}^{-1} \mathfrak{g}_{b_j b_j}^{-1} \norm{(H^{\mathfrak{g}}\partial_{b_k} \Psi \delta_x)_{b_i,b_j}}_\L^2 \,.
\end{align*}
If $b_k$ is not in the same dimension as either $b_i$ or $b_j$, 
\begin{align*}
   \mathfrak{g}_{b_k b_k}^{-1} \mathfrak{g}_{b_i b_i}^{-1} \mathfrak{g}_{b_j b_j}^{-1} \norm{(H^{\mathfrak{g}}\partial_{b_k} \Psi \delta_x)_{b_i,b_j}}_\L^2 &\leq \mathfrak{g}_{b_k b_k}^{-1} \partial_{b_k b_k'} K_{\rm norm}(x_k,x_k) \mathfrak{g}_{b_i b_i}^{-1} \mathfrak{g}_{b_j b_j}^{-1} \norm{(H^{\mathfrak{g}} \Psi \delta_x)_{b_i,b_j}}_\L^2 \,, \\
   & \leq 7
\end{align*}
where for the last inequality we used the controls established in the proofs of \citep[Lemma J.2, Lemma K.1]{article_giard_decastro_marteau} when deriving $B_{11}$ and $B_{22}$.

If $b_k$ is taken on the same dimension as $b_j$ or $b_i$, but $b_j$ and $b_i$ are not on the same dimension (say that for instance $b_k$ is on the same dimension as $b_i$), 
\begin{align*}
  \mathfrak{g}_{b_k b_k}^{-1} \mathfrak{g}_{b_i b_i}^{-1} \mathfrak{g}_{b_j b_j}^{-1} \norm{(H^{\mathfrak{g}}\partial_{b_k} \Psi \delta_x)_{b_i,b_j}}_\L^2 &= \mathfrak{g}_{b_k b_k}^{-1} \mathfrak{g}_{b_i b_i}^{-1} \mathfrak{g}_{b_j b_j}^{-1} \norm{\partial_{b_k b_i b_j} \Psi \delta_x}_\L^2 \,, \\
  & \leq  \mathfrak{g}_{b_j b_j}^{-1} \partial_{b_j b_j} K_{\rm norm}(x_j,x_j) \mathfrak{g}_{b_k b_k}^{-1} \mathfrak{g}_{b_i b_i}^{-1} \partial_{b_k b_i} \partial_{b_k' b_i'}K_{\rm norm}(x_k,x_k) \,, \\
  & \leq 9 \,,
\end{align*}
as shown when establishing $B_{02}^e$.

If $b_k,b_i,b_j$ are all taken on the same dimension, $\mathfrak{g}_{b_k b_k}^{-1} \mathfrak{g}_{b_i b_i}^{-1} \mathfrak{g}_{b_j b_j}^{-1} \norm{(H^{\mathfrak{g}}\partial_{b_k} \Psi \delta_x)_{b_i,b_j}}_\L^2$ can take the forms
\begin{align*} & \mathfrak{g}_{t_k t_k}^{-3}\norm{\partial_{t_k t_k t_k} \Psi \delta_x - \Gamma^{u_k}{}_{t_k t_k}\partial_{u_k t_k}\Psi \delta_x}_\L^2  
\\\text{or} \quad & \mathfrak{g}_{t_k t_k}^{-2}\mathfrak{g}_{u_k u_k}^{-1}\norm{\partial_{t_k t_k u_k} \Psi \delta_x - \Gamma^{t_k}{}_{t_k u_k}\partial_{t_k t_k} \Psi \delta_x}_\L^2
\\\text{or} \quad & \mathfrak{g}_{t_k t_k}^{-1}\mathfrak{g}_{u_k u_k}^{-2}\norm{\partial_{t_k u_k u_k} \Psi \delta_x - \Gamma^{t_k}{}_{t_k u_k}\partial_{t_k u_k} \Psi \delta_x}_\L^2
\\\text{or} \quad & \mathfrak{g}_{u_k u_k}^{-2}\mathfrak{g}_{t_k t_k}^{-1}\norm{\partial_{u_k u_k t_k} \Psi \delta_x - \Gamma^{u_k}{}_{u_k u_k}\partial_{u_k t_k}\Psi \delta_x}_\L^2
\\\text{or} \quad & \mathfrak{g}_{u_k u_k}^{-3}\norm{\partial_{u_k u_k u_k} \Psi \delta_x - \Gamma^{u_k}{}_{u_k u_k}\partial_{u_k u_k}\Psi \delta_x}_\L^2\,.
\end{align*}
These expressions are bounded respectively by $1$, $21$, $\frac{5\tau^4}{2u_k^4}-\frac{10\tau^2}{u_k^2}+55 \leq 55$, $55$, $\frac{\tau^8}{4u_k^8}+\frac{4\tau^6}{u_k^6}+\frac{35\tau^4}{u_k^4}-\frac{16\tau^2}{u_k^2}+171\leq 194.25$. We used that $\tau \leq u_{\min} \leq u_k$. 
Hence we can take $B_{33}^e=(2d)^3 \times 194.25$. 
\\\underline{Local curvature:} This control comes from \citep[Theorem 5.3]{article_giard_decastro_marteau}.
\end{proof}

\begin{lemma}[Taylor expansions on the kernel] \label{lemma:taylor_expansions_kernel}
    Let $x,x' \in \R^d \times [u_{\min},+\infty)^d$.
    We have \[\norm{\Psi(\delta_{x} - \delta_{x'})}_\L^2 \leq B_{02} \mathfrak{d}_\mathfrak{g}(x,x')^2\,.\]
\end{lemma}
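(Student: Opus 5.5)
We will prove the bound by a second-order Taylor expansion along the Fisher--Rao geodesic $\gamma\coloneq\gamma_{x\to x'}$, parameterized on $[0,1]$ with constant speed $\norm{\dot\gamma(y)}_{\gamma(y)}=\mathfrak{d}_\mathfrak{g}(x,x')$. Since $K_{\rm norm}(z,z)=1$ for every $z$,
\[
\norm{\Psi(\delta_x-\delta_{x'})}_\L^2 = 2-2K_{\rm norm}(x,x') .
\]
We therefore introduce $f(y)\coloneq K_{\rm norm}(x,\gamma(y))$ and show that $f(0)-f(1)\le \frac{1}{2}B_{02}\,\mathfrak{d}_\mathfrak{g}(x,x')^2$.

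\emph{First-order term.} We have $f(0)=1$, and $f'(0)=\nabla_2K_{\rm norm}(x,x)^T\dot\gamma(0)$. Because $z\mapsto K_{\rm norm}(x,z)$ attains its maximum $1$ at $z=x$ (Cauchy--Schwarz, using $\norm{\Psi\delta_z}_\L=1$), we get $\nabla_2K_{\rm norm}(x,x)=0$. Equivalently, differentiating $K_{\rm norm}(z,z)\equiv 1$ and using symmetry gives the same conclusion. Hence $f'(0)=0$.

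\emph{Second-order term.} Along a geodesic, $\ddot\gamma^m=-\Gamma^m_{ij}\dot\gamma^i\dot\gamma^j$. Differentiating twice therefore produces the covariant Hessian:
\[
f''(y)=\dot\gamma(y)^T H^{\mathfrak{g}}_2 K_{\rm norm}(x,\cdot)(\gamma(y))\,\dot\gamma(y)=K_{\rm norm}^{(02)}(x,\gamma(y))[\dot\gamma(y),\dot\gamma(y)].
\]
Here we use that $D_0[\Psi](x)=\Psi\delta_x$, so the covariant derivatives of $\Psi$ pass through the inner product, consistent with Definition~\ref{def:norm_covariant_derivatives}. By the global control of Lemma~\ref{lemma:controls_kernel} and the constant speed,
\[
|f''(y)|\le B_{02}\,\norm{\dot\gamma(y)}_{\gamma(y)}^2=B_{02}\,\mathfrak{d}_\mathfrak{g}(x,x')^2 .
\]
Taylor's formula with integral remainder then gives $1-f(1)\le \tfrac12 B_{02}\,\mathfrak{d}_\mathfrak{g}(x,x')^2$, which is the desired bound.

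\emph{Main obstacle.} The expected difficulty is justifying the geodesic setup. The geodesic must exist and stay in a region where the Lemma~\ref{lemma:controls_kernel} bounds apply. Those bounds are uniform over $\R^d\times[u_{\min},+\infty)^d$, while Fisher--Rao geodesics (semicircles) between points with $u\ge u_{\min}$ can dip below $u_{\min}$ in the $u$-coordinate. If that happens, we would check that the $B_{02}$ computation in \citep{article_giard_decastro_marteau} only needs $\tau\le u$ along the path, or extend it to all $u>0$ at the cost of a constant. Alternatively, we would replace the geodesic by any smooth curve in the domain whose energy is close to $\mathfrak{d}_\mathfrak{g}^2$, and take an infimum. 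That version requires keeping the extra term $\nabla_2K(x,\gamma)^T(\ddot\gamma+\Gamma(\dot\gamma,\dot\gamma))$, which is why the geodesic is preferred.

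Finally, a cleaner alternative that avoids Christoffel symbols is the following. Write $\Psi\delta_{x'}-\Psi\delta_x=\int_0^1\frac{d}{dy}\Psi\delta_{\gamma(y)}\,\d y$. Then $\norm{\frac{d}{dy}\Psi\delta_{\gamma}}_\L^2=\dot\gamma^T\mathfrak g_\gamma\dot\gamma$ by \eqref{eq:def_fisher_rao_metric}, and Jensen yields
\[
\norm{\Psi(\delta_x-\delta_{x'})}_\L^2\le\int_0^1\norm{\dot\gamma}_\gamma^2\,\d y=\mathfrak{d}_\mathfrak{g}(x,x')^2\le B_{02}\,\mathfrak{d}_\mathfrak{g}(x,x')^2,
\]
since $B_{02}\ge1$. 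This route works for any admissible curve, so it sidesteps the domain issue. I would present this as the main argument and keep the Taylor expansion as a remark.
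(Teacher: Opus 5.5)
Your Taylor-expansion argument is the paper's proof: expand $2-2K_{\rm norm}(x,\gamma_{x\to x'}(y))$ to second order, drop the first-order term since $\nabla_2K_{\rm norm}(x,x)=0$, and bound the covariant Hessian by $B_{02}$ via Lemma~\ref{lemma:controls_kernel}.

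The domain obstacle you anticipate does not occur. Set $v_k=\sqrt{u_k^2+\tau^2/2}$. Each factor of the metric then becomes $\frac{(\d t_k)^2+(\d v_k)^2}{2v_k^2}$. So geodesics project onto vertical segments or onto arcs of circles centered on $\{v_k=0\}$. Along such arcs $v_k$ is a concave function of $t_k$, so $u_k$ never drops below its smaller endpoint value. This is the containment the paper quotes in Appendix~\ref{section:geodesics_and_distances}.

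The route you actually propose to present is genuinely different from the paper's, and sharper. Because $\mathfrak{g}_x=\nabla_1\nabla_2K_{\rm norm}(x,x)$ is exactly the pullback of $\innerprod{\cdot}{\cdot}_\L$ under $x\mapsto\Psi\delta_x$, your Jensen computation is the chord--arc inequality for this immersion. It gives $\norm{\Psi(\delta_x-\delta_{x'})}_\L\le\mathfrak{d}_\mathfrak{g}(x,x')$, i.e.\ the lemma with constant $1$ instead of $B_{02}=\sqrt{4d^2+10d}$.

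What your route buys:
\begin{itemize}
\item It needs no Christoffel symbols and no covariant Hessian bound.
\item It needs no information on where the curve lives: any $C^1$ curve with energy close to $\mathfrak{d}_\mathfrak{g}(x,x')^2$ suffices.
\item It is dimension-free. The gain would propagate downstream; for instance, the Lipschitz factor $\sqrt{B_{02}}$ in Lemma~\ref{lemma:control_Jp} could be replaced by $1$.
\end{itemize}
The one point you should state explicitly is that $y\mapsto\Psi\delta_{\gamma(y)}$ is $C^1$ in $\L$-norm, so that the $\L$-valued fundamental theorem of calculus applies. Pointwise membership of the derivatives in $\L$ is not quite the same thing. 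Here it is routine: apply dominated convergence to the explicit Gaussian Fourier transforms in the norm \eqref{eq:dot_product_L}.

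What the paper's route buys: essentially only uniformity with the second-order Taylor template reused throughout Appendix~\ref{section:proof_exponential_cv}.
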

\begin{proof}
  Remark that $K_{\rm norm}(x,x) = K_{\rm norm}(x',x') = 1$ and that $\nabla_2 K_{\rm norm}(x,x) = 0$. Writing $\gamma_{x \to x'}$ the Fisher-Rao geodesic such that $\gamma_{x \to x'}(0)=x$, $\gamma_{x \to x'}(1)=x'$, it comes that
  \begin{align*}
      \norm{\Psi(\delta_{x} - \delta_{x'})}_\L^2 &= 2K_{\rm norm}(x,x)-2K_{\rm norm}(x,x')\,, \\
      &= -2\dot \gamma_{x \to x'}(0)^T \nabla_2 K_{\rm norm}(x,x) - 2\int_0^1 (1-y)\dot \gamma_{x \to x'}(y)^T H_2^\mathfrak{g} K_{\rm norm}(x,\gamma_{x \to x'}(y)) \dot \gamma_{x \to x'}(y) \, \d y \,, \\
      &= -2\int_0^1 (1-y)\dot \gamma_{x \to x'}(y)^T H_2^\mathfrak{g} K_{\rm norm}(x,\gamma_{x \to x'}(y)) \dot \gamma_{x \to x'}(y) \, \d y \,.
  \end{align*}
  From the control on the Riemannian Hessian (Lemma \ref{lemma:controls_kernel}), and as $\gamma_{x\to x'}$ stays in $\R^d \times [u_{\min},+\infty)^d$ (Section \ref{section:geodesics_and_distances}), we have \[\left|\dot \gamma_{x \to x'}(y)^T H_2^\mathfrak{g} K_{\rm norm}(x,\gamma_{x \to x'}(y)) \dot \gamma_{x \to x'}(y) \right| \leq B_{02} \mathfrak{d}_\mathfrak{g}(x,x')^2\,.\]
  So $\norm{\Psi(\delta_{x} - \delta_{x'})}_\L^2 \leq B_{02} \mathfrak{d}_\mathfrak{g}(x,x')^2.$
\end{proof}

\subsection{Bound on the TV norm of the iterates, control of the Fréchet derivative}
\begin{lemma}[TV norm of the iterates]\label{lemma:control_tv_iterates}
Let $\X$ be a compact subset of $\R^d \times [u_{\min},+\infty)^d$ and $C_\X \coloneq \inf_{x,x' \in \X} K_{\rm norm}(x,x')$. Let $\mu_\omega^1=\sum_{j=1} \omega_j^1 \delta_{x_j^1}$ be the initialization of CPGD, with $\omega_j^1 \geq 0$ for $j=1,\ldots,p$. Let $\mu_\omega^k$ be the kth iterate of the algorithm, with weights updated via $\omega_j^{k+1} = \omega_j^k e^{-\alpha J_{\mu_\omega^k}'(x_j^k)}$, and location-scale parameters verifying $x_j^k \in \X$ for all $j=1,\ldots,p$ and $k\geq 1$.
Then $\mu_\omega^k$ is a nonnegative measure. Moreover, for all $k\geq 1$, \[\norm{\mu_\omega^k}_{\rm TV} \leq \max\left\{\frac{1}{C_{\X}}(\norm{y}_\L - \kappa) e^{\alpha(\norm{y}_\L-\kappa)} \,, \, \norm{\mu_\omega^1}_{\rm TV} \right\} \leq \max\left\{\frac{1}{C_{\X}}\norm{y}_\L e^{\alpha \norm{y}_\L} \,, \, \norm{\mu_\omega^1}_{\rm TV} \right\} \eqcolon B_{\rm TV} \,.\]
\end{lemma}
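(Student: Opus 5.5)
We argue by induction on $k$. Nonnegativity is immediate: the update multiplies each $\omega_j^k\geq 0$ by the positive factor $e^{-\alpha J_{\mu_\omega^k}'(x_j^k)}$, so all weights stay nonnegative. Since $\mu_\omega^k$ is nonnegative, $\norm{\mu_\omega^k}_{\rm TV}=\sum_j \omega_j^k$. Write $M\coloneq\frac{1}{C_\X}(\norm{y}_\L-\kappa)e^{\alpha(\norm{y}_\L-\kappa)}$. The aim is to show that if $\norm{\mu_\omega^k}_{\rm TV}\leq \max\{M,\norm{\mu_\omega^1}_{\rm TV}\}$, then the same bound holds at step $k+1$.

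The key observation is a two-sided control of the Fréchet derivative on $\X$, taken from \eqref{eq:def_Jp}. For $x\in\X$, Cauchy--Schwarz and $\norm{\Psi\delta_x}_\L=1$ give $-\innerprod{\Psi\delta_x}{y}_\L\geq -\norm{y}_\L$. Since all atoms $x_j^k$ lie in $\X$ and the weights are nonnegative,
\[
\innerprod{\Psi\delta_x}{\Psi\mu_\omega^k}_\L=\sum_j\omega_j^k K_{\rm norm}(x,x_j^k)\geq C_\X\norm{\mu_\omega^k}_{\rm TV}.
\]
Hence $J_{\mu_\omega^k}'(x)\geq C_\X\norm{\mu_\omega^k}_{\rm TV}-(\norm{y}_\L-\kappa)$ for all $x\in\X$. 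In addition, unconditionally, $J'_{\mu_\omega^k}(x)\geq \kappa-\norm{y}_\L$, using that $K_{\rm norm}\geq 0$.

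We then split into two cases. If $\norm{\mu_\omega^k}_{\rm TV}\geq (\norm{y}_\L-\kappa)/C_\X$, then $J'_{\mu_\omega^k}\geq 0$ at every particle. Every weight therefore decreases, so $\norm{\mu_\omega^{k+1}}_{\rm TV}\leq\norm{\mu_\omega^k}_{\rm TV}$, and the induction hypothesis carries over. Otherwise $\norm{\mu_\omega^k}_{\rm TV}<(\norm{y}_\L-\kappa)/C_\X$, and the unconditional lower bound gives $\omega_j^{k+1}\leq \omega_j^k e^{\alpha(\norm{y}_\L-\kappa)}$. Summing over $j$ yields $\norm{\mu_\omega^{k+1}}_{\rm TV}\leq M$. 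In both cases the bound propagates. If $\norm{y}_\L<\kappa$, the second case is empty and the mass is nonincreasing, so the maximum is attained by $\norm{\mu_\omega^1}_{\rm TV}$.

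The final inequality, bounding the maximum by $B_{\rm TV}$, is monotonicity in $\kappa\geq0$ of $s\mapsto s e^{\alpha s}$ on $s\geq 0$; the negative case is trivial. The only delicate point is the use of $C_\X>0$ together with the fact that all atoms remain in $\X$; this is exactly where the hypothesis $x_j^k\in\X$ enters. No other step poses difficulty.
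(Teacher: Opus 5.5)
Your proof is correct and matches the paper's argument: it uses the same lower bound $J'_{\mu_\omega^k}(x)\geq C_\X\norm{\mu_\omega^k}_{\rm TV}-\norm{y}_\L+\kappa$ on $\X$, and the same two-case split according to whether $\norm{\mu_\omega^k}_{\rm TV}$ exceeds $(\norm{y}_\L-\kappa)/C_\X$. The one point you flag but do not justify, $C_\X>0$, the paper settles in one line: $K_{\rm norm}$ is continuous and strictly positive, so its infimum over the compact set $\X\times\X$ is attained and is positive.
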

\begin{proof}
   Note that as our kernel is positive ($\innerprod{\Psi \delta_x}{\Psi \delta_{x'}}> 0$ for $x,x' \in \R^d \times (0,+\infty)^d$) and as $\X$ is compact, $C_{\X}>0$. As $\omega_j^k\geq 0$ for all $j=1,\ldots,p$, it comes that, for $x \in \X$, 
   \begin{equation}\label{eq:Jp_bound_CX}
       J_{\mu_\omega^k}'(x)=\innerprod{\Psi \delta_x}{\sum_{j=1}^p \omega_j^k \Psi \delta_{x_j^k} - y}_\L + \kappa \geq C_{\X}\norm{\mu_\omega^k}_{\rm TV} - \norm{y}_\L + \kappa\,.
   \end{equation}
\\\underline{Case $\norm{\mu_\omega^k}_{\rm TV} \geq \frac{\norm{y}_\L-\kappa}{C_{\X}}$:} From \eqref{eq:Jp_bound_CX}, as we assumed that $x_j^k\in\X$, it comes that for all $j=1,\ldots,p$, $J_{\mu_\omega^k}'(x_j^k)\geq 0$. Hence $\omega_j^{k+1}= \omega_j^k e^{-\alpha J_{\mu_\omega^k}'(x_j^k)} \leq \omega_j^k$ and $\norm{\mu_\omega^{k+1}}_{\rm TV}\leq \norm{\mu_\omega^k}_{\rm TV}$.
\\\underline{Case $\norm{\mu_\omega^k}_{\rm TV} \leq \frac{\norm{y}_\L-\kappa}{C_{\X}}$:} Then $\norm{y}_\L-\kappa\geq 0$ (because $\norm{\mu_\omega^k}_{\rm TV}\geq 0$) and from \eqref{eq:Jp_bound_CX} we get $-J_{\mu_\omega^k}'(x_j^k) \leq \norm{y}_\L-\kappa$, leading to
$\omega_j^{k+1}\leq \omega_j^k e^{\alpha (\norm{y}_\L-\kappa)}$. It comes that $\norm{\mu_\omega^{k+1}}_{\rm TV} \leq \frac{1}{C_\X}(\norm{y}_\L - \kappa) e^{\alpha(\norm{y}_\L-\kappa)}$. 

We showed that for all $k\geq 1$, \[\norm{\mu_\omega^k}_{\rm TV} \leq \max\left\{\frac{1}{C_\X}(\norm{y}_\L - \kappa) e^{\alpha(\norm{y}_\L-\kappa)} \,, \, \norm{\mu_\omega^1}_{\rm TV}\right\}\,.\]
\end{proof}

\begin{remark}[Controls on the TV norm of the iterates without $C_\X$]\label{remark:control_tv_iterates_wo_C_X}
Without exploiting the positivity of the kernel $K_{\rm norm}$, one can still derive a bound on $\norm{\mu_\omega^k}_{\rm TV}$, although this bound depends on $p$. Indeed, without using that $C_\X>0$, we have \[  J_{\mu_\omega^k}'(x) \geq \omega_j^k - \norm{y}_\L + \kappa \quad \forall\, x\in\X\,, \] from which, by distinguishing the cases $\omega_j^k \geq \norm{y}_\L - \kappa$ and $\omega_j^k \leq \norm{y}_\L - \kappa$, we can obtain \[\norm{\mu_\omega^k}_{\rm TV} \leq \max\left\{p(\norm{y}_\L - \kappa) e^{\alpha(\norm{y}_\L-\kappa)} \,, \, \norm{\mu_\omega^1}_{\rm TV} \right\} \,.\]    
\end{remark}

\begin{lemma}[Control of $J'$]\label{lemma:control_Jp}
For $\mu\in \M(\R^d \times [u_{\min},+\infty)^d)^+$ and $x \in \R^d \times [u_{\min},+ \infty)^d$,
\[-\norm{y}_{\L}+\kappa \leq J_\mu'(x) \leq  \norm{\mu}_{\rm TV} + \norm{y}_\L+ \kappa\,.\] Moreover, for $x,x' \in \R^d \times [u_{\min},+ \infty)^d$,
\[\norm{\nabla^{\mathfrak{g}} J_\mu'(x)}_x \leq \sqrt{B_{11}} (\norm{\mu}_{\rm TV}+\norm{y}_\L)\,,\] 
\[\norm{\mathfrak{g}_x^{-1/2}H^{\mathfrak{g}}J_\mu'(x) \mathfrak{g}_x^{-1/2}}_2 \leq \sqrt{B_{22}} (\norm{\mu}_{\rm TV}+\norm{y}_\L)\] 
and
\[
    |J_{\mu}'(x)-J_{\mu}'(x')|\leq \sqrt{B_{02}} \mathfrak{d}_\mathfrak{g}(x,x') (\norm{\mu}_{\rm TV}+\norm{y}_\L)\,.
\]

\end{lemma}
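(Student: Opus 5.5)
The starting point is the explicit formula \eqref{eq:def_Jp}, $J_\mu'(x) = \kappa + \innerprod{\Psi\delta_x}{\Psi\mu - y}_\L$. Every claimed bound then reduces to Cauchy--Schwarz in $\L$ together with the global kernel controls of Lemma \ref{lemma:controls_kernel}. For each bound, I would first estimate $\norm{\Psi\mu}_\L$ or its derivatives by $\norm{\mu}_{\rm TV}$ times a pointwise bound, using that $\mu$ is nonnegative and Minkowski's integral inequality in $\L$.

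\textbf{Pointwise bounds.} For the lower bound, note that $\innerprod{\Psi\delta_x}{\Psi\mu}_\L = \int K_{\rm norm}(x,x')\,\d\mu(x') \geq 0$, since the kernel is positive and $\mu \geq 0$. Moreover $|\innerprod{\Psi\delta_x}{y}_\L| \leq \norm{\Psi\delta_x}_\L \norm{y}_\L = \norm{y}_\L$, using the normalization $\norm{\Psi\delta_x}_\L = 1$ (i.e.\ $B_{00}=1$). Together these give $J_\mu'(x) \geq \kappa - \norm{y}_\L$. For the upper bound, $K_{\rm norm} \leq 1$ gives $\int K_{\rm norm}(x,\cdot)\,\d\mu \leq \norm{\mu}_{\rm TV}$, so $J_\mu'(x) \leq \norm{\mu}_{\rm TV} + \norm{y}_\L + \kappa$.

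\textbf{Gradient and Hessian bounds.} For any $v$ with $\norm{v}_x \leq 1$, I would write $D_1[J_\mu'](x)[v] = \innerprod{D_1[\Psi](x)[v]}{\Psi\mu - y}_\L$. Cauchy--Schwarz bounds this by $\norm{D_1[\Psi](x)[v]}_\L\,(\norm{\Psi\mu}_\L + \norm{y}_\L)$. Here $\norm{D_1[\Psi](x)[v]}_\L^2 = [v]K^{(11)}_{\rm norm}(x,x)[v] \leq B_{11}$, and $\norm{\Psi\mu}_\L \leq \int \norm{\Psi\delta_{x'}}_\L\,\d\mu(x') = \norm{\mu}_{\rm TV}$. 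Taking the supremum over $v$ and using $\norm{\nabla^{\mathfrak{g}}\psi(x)}_x = \norm{D_1[\psi](x)}_x$ yields the gradient bound. The Hessian bound is identical. The quantity $\norm{\mathfrak{g}_x^{-1/2} H^{\mathfrak{g}} J_\mu'(x)\, \mathfrak{g}_x^{-1/2}}_2$ equals $\sup_{\norm{v}_x, \norm{v'}_x \leq 1} D_2[J_\mu'](x)[v,v']$. This supremum is bounded by $\sup \norm{D_2[\Psi](x)[v,v']}_\L\,(\norm{\mu}_{\rm TV} + \norm{y}_\L)$, and the squared norm $[v,v']K^{(22)}_{\rm norm}(x,x)[v,v']$ is at most $B_{22}$. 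The one point to check is that the covariant derivative commutes with the inner product, since the Christoffel terms are linear; the kernel-derivative identity recalled before Lemma \ref{lemma:controls_kernel} provides this.

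\textbf{Lipschitz bound.} The difference is $J_\mu'(x) - J_\mu'(x') = \innerprod{\Psi(\delta_x - \delta_{x'})}{\Psi\mu - y}_\L$. Applying Cauchy--Schwarz and then Lemma \ref{lemma:taylor_expansions_kernel}, which gives $\norm{\Psi(\delta_x - \delta_{x'})}_\L \leq \sqrt{B_{02}}\,\mathfrak{d}_\mathfrak{g}(x,x')$, produces exactly the claimed inequality.

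The only mildly delicate step is justifying the identification of the operator norms of the covariant derivatives of $J_\mu'$ with the kernel quantities $K^{(ij)}_{\rm norm}(x,x)$, which requires differentiating under the integral against $\mu$. Since all the derivatives involved are bounded uniformly, dominated convergence handles this.
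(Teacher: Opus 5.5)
Your proposal is correct and follows essentially the same route as the paper's proof. Both use positivity and normalization of $K_{\rm norm}$ for the two pointwise bounds, Cauchy--Schwarz in $\L$ of the covariant derivatives of $\Psi\delta_x$ against $\Psi\mu-y$ (with $\norm{\Psi\mu}_\L\leq\norm{\mu}_{\rm TV}$ and the constants $B_{11}$, $B_{22}$) for the gradient and Hessian bounds, and Lemma \ref{lemma:taylor_expansions_kernel} for the Lipschitz bound.

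One minor remark on your final paragraph: what is actually needed is that $x\mapsto\Psi\delta_x$ is differentiable as an $\L$-valued map, so that derivatives pass inside $\innerprod{\cdot}{\Psi\mu-y}_\L$, including the $y$-term. Differentiation under the integral against $\mu$ is not the issue, and the paper takes this differentiability for granted as well.
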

\begin{proof}
Let $\mu\in \M(\R^d \times [u_{\min},+\infty)^d)^+$. For $x=(t_1,\ldots,t_d,u_1,\ldots,u_d)\in \R^d \times [u_{\min},+\infty)^d$, again as our kernel is positive it comes that $J_\mu'(x) \geq -\norm{y}_\L + \kappa$.
Moreover, as $\norm{\Psi \delta_x}_{\L}\leq 1$, we have $J_{\mu}'(x) \leq \norm{\mu}_{\rm TV} + \norm{y}_\L+ \kappa$.
In addition (\cf Definition \ref{def:norm_covariant_derivatives}),
\begin{align*}
    \norm{\nabla^{\mathfrak{g}} J_\mu'(x)}_x& = \norm{\mathfrak{g}_x^{-1/2} \innerprod{\nabla_x\Psi \delta_{x}}{\Psi \mu -y}_\L}_2 \,, \\
    &\leq \sqrt{B_{11}} (\norm{\mu}_{\rm TV}+\norm{y}_\L)
\end{align*}
and
\begin{align*}
    \norm{\mathfrak{g}_x^{-1/2} H^{\mathfrak{g}} J_\mu'(x)\mathfrak{g}_x^{-1/2}}_2& =  \norm{\mathfrak{g}_x^{-1/2} \innerprod{H_x^{\mathfrak{g}} \Psi \delta_x}{\Psi \mu- y}_\L\mathfrak{g}_x^{-1/2}}_2 \,, \\
    &\leq \sqrt{B_{22}} (\norm{\mu}_{\rm TV}+\norm{y}_\L)\,.
\end{align*}

Finally, using Lemma \ref{lemma:taylor_expansions_kernel}, for $x,x' \in \R^d \times [u_{\min},+ \infty)^d$ we have
\begin{align*}
    |J_{\mu}'(x)-J_{\mu}'(x')|& = |\innerprod{\Psi (\delta_{x}-\delta_{x'})}{\Psi \mu -y}_\L| \,, \\
    &\leq \norm{\Psi (\delta_{x}-\delta_{x'})}_\L (\norm{\mu}_{\rm TV}+\norm{y}_\L)\,,\\
    & \leq   \sqrt{B_{02}} \mathfrak{d}_\mathfrak{g}(x,x') (\norm{\mu}_{\rm TV}+\norm{y}_\L)\,.
\end{align*}
\end{proof}

\section{Proof of Proposition \ref{prop:weight_convergence}}\label{section:proof_prop_weight_convergence}

Let $1\leq k \leq K$. Our condition on $\mathcal{T}$ ensures that $\mu_\omega^k \in\M(\X)^+$. We follow the proof of \citep[Theorem 1.2]{fastpart} (inspired by \citep[Section 4.2]{cpgd_chizat}).

In this proof, we denote $c_a$ a constant depending on $a$, where $a$ can stand for instance for $\norm{y}_\L, \norm{\mu^1}_{\rm TV}$. Its value may change between lines.

Remark that
 \[J_W(\mu_\omega^k) -  J_W(\mu_\omega^\star)=  \int_{\X} J_{\mu_\omega^k}' \, \d(\mu_\omega^k-\mu_\omega^\star) - \frac{1}{2}\norm{\Psi(\mu_\omega^k-\mu_\omega^\star)}_\L^2 \leq \int_{\X} J_{\mu_\omega^k}' \, \d(\mu_\omega^k-\mu_\omega^\star) \,.\]
 
 We introduce an auxiliary measure $\mu_\epsilon^k=\sum_{j=1}^p \omega_j^\epsilon \delta_{x_j^k}$, which shares the same position–scale parameters as $\mu_\omega^k$, but with different weights $\omega_j^\epsilon\geq 0$ which are independent of $k$ (remain fixed through the optimization procedure). We have
\begin{align*}
  J_W(\mu_\omega^k) -  J_W(\mu_\omega^\star)&\leq  \int_{\X} J_{\mu_\omega^k}' \, \d(\mu_\omega^k-\mu_\varepsilon^k) + \int_{\X} J_{\mu_\omega^k}' \, \d(\mu_\varepsilon^k-\mu_\omega^\star)\,,\\
  &= \underbrace{\sum_{j=1}^p (\omega_j^k-\omega_j^\epsilon)J_{\mu_\omega^k}'(x_j^k)}_{A} + \underbrace{\int_{\X} J_{\mu_\omega^k}' \, \d(\mu_\epsilon^k-\mu_\omega^\star)}_{B} \,.
\end{align*}
\underline{Control of A:} We introduce the entropy between $\mu_1,\mu_2 \in \M(\X)^+$: 
\begin{equation} \label{eq:def_H_mu1_mu2}
 \mathcal{H}(\mu_1,\mu_2)=-\int_{\X} \ln\left(\frac{\d \mu_1}{\d \mu_2} \right)\,\d\mu_2 -\norm{\mu_2}_{\rm TV}+\norm{\mu_1}_{\rm TV}\,.   
\end{equation}
Remark that as $\omega_j^k>0$ for all $k\geq 1$ and as $\omega_j^{k+1} = \omega_j^k e^{-\alpha J_{\mu_\omega^k}'(x_j^k)}$, 
$J_{\mu_\omega^k}'(x_j^k)=-\frac{1}{\alpha}\ln\left(\frac{\omega_j^{k+1}}{\omega_j^k} \right)$. 

We also use the inequality $e^{-z}+z-1\leq c_{z_{\min}} z^2$ for $z\geq z_{\min}$, together with Lemma \ref{lemma:control_Jp} which gives $\alpha J_{\mu_\omega^k}'\geq - \alpha_{\max}\norm{y}_\L $ for $\alpha\leq \alpha_{\max}$.
With the convention $\omega_j^\epsilon\ln\left(\frac{\omega}{\omega_j^\epsilon}\right) = 0$ when $\omega_j^\epsilon=0$ and $\omega>0$, it comes that
\begin{align}
  A&= \sum_{j=1}^p \omega_j^k J_{\mu_\omega^k}'(x_j^k)-\omega_j^\epsilon J_{\mu_\omega^k}'(x_j^k)\,, \notag \\
  &=\sum_{j=1}^p \omega_j^k J_{\mu_\omega^k}'(x_j^k)+ \frac{1}{\alpha} \omega_j^\epsilon\ln\left(\frac{\omega_j^{k+1}}{\omega_j^k} \right) \,, \notag\\
  &=\sum_{j=1}^p \omega_j^k J_{\mu_\omega^k}'(x_j^k) + \frac{1}{\alpha}(\omega_j^{k+1}-\omega_j^k)+ \frac{1}{\alpha} \left(-\omega_j^\epsilon\ln\left(\frac{\omega_j^{k}}{\omega_j^\epsilon}\right)+\omega_j^k +\omega_j^\epsilon\ln\left(\frac{\omega_j^{k+1}}{\omega_j^\epsilon} \right)-\omega_j^{k+1}\right) \,, \notag\\
  &=\frac{1}{\alpha}\sum_{j=1}^p \omega_j^k\left(\alpha J_{\mu_\omega^k}'(x_j^k) +e^{-\alpha J_{\mu_\omega^k}'(x_j^k)}-1\right) + \frac{1}{\alpha}(\mathcal{H}(\mu_\omega^k,\mu_\epsilon^k)-\mathcal{H}(\mu_\omega^{k+1},\mu_\epsilon^{k+1}) ) \,, \notag\\
  &\leq \alpha c_{\alpha_{\max},\norm{y}_\L} \norm{\mu_\omega^k}_{\rm TV}  + \frac{1}{\alpha}(\mathcal{H}(\mu_\omega^k,\mu_\epsilon^k)-\mathcal{H}(\mu_\omega^{k+1},\mu_\epsilon^{k+1}) )  \,,  \notag\\
  &\leq \alpha c_{\alpha_{\max},\norm{y}_\L, \X, \norm{\mu_\omega^1}_{\rm TV}} + \frac{1}{\alpha}(\mathcal{H}(\mu_\omega^k,\mu_\epsilon^k)-\mathcal{H}(\mu_\omega^{k+1},\mu_\epsilon^{k+1}) )  \,, \label{eq:prop_3-1_bound_A}
\end{align}
where for the last line we used Lemma \ref{lemma:control_tv_iterates} to get 
\begin{align*}
    \norm{\mu_\omega^k}_{\rm TV} 
    &\leq \max\left\{\frac{1}{C_\X}\norm{y}_\L e^{\alpha_{\max}\norm{y}_\L} \,, \, \norm{\mu_\omega^1}_{\rm TV} \right\} \,, \\
    &\leq c_{\alpha_{\max}, \norm{y}_\L , \X, \norm{\mu_\omega^1}_{\rm TV}}\,.
\end{align*}
\underline{Control of $B$:} For $f$ a continuous real-valued function on $\X$, we denote \[Lip(f)=\sup_{x\neq x' \in \X} \frac{|f(x)-f(x')|}{\mathfrak{d}_\mathfrak{g}(x,x')}\,.\] When $f$ is bounded and has a finite Lipschitz constant, we also define the norm BL (Bounded-Lipschitz) by $\norm{f}_{BL}=\norm{f}_{\infty}+Lip(f)$, \cf \citep[Section 1.3]{cpgd_chizat}. The dual norm is $\norm{\mu}_{BL}^*\coloneq \underset{\substack{f \in \C(\X) \\ \norm{f}_{BL}\leq 1}}{\sup} \int_{\X} f \, \d\mu$.
Using Lemma \ref{lemma:control_Jp}, for $x,x' \in \X$ we have
\begin{align*}
    |J_{\mu_\omega^k}'(x)-J_{\mu_\omega^k}'(x')|
    & \leq   \sqrt{B_{02}} \mathfrak{d}_\mathfrak{g}(x,x') (\norm{\mu_\omega^k}_{\rm TV}+\norm{y}_\L)\,, \\
    & \leq c_{\alpha_{\max},\norm{y}_\L,\X, \norm{\mu_\omega^1}_{\rm TV}} \mathfrak{d}_\mathfrak{g}(x,x')
\end{align*}
(the dependence on $d$ coming from $B_{02}$ is contained in the dependence on $\X$.
From Lemmas \ref{lemma:control_Jp} and \ref{lemma:control_tv_iterates} again, using that $\kappa\leq 1$ it also comes that $|J_{\mu_\omega^k}'| \leq c_{\alpha_{\max},\norm{y}_\L,\X, \norm{\mu_\omega^1}_{\rm TV}}$.
So
\begin{align}
    B& \leq \norm{J_{\mu_\omega^k}'}_{BL} \norm{\mu_\epsilon^k-\mu_\omega^\star
    }_{BL}^* \,, \notag \\
    & \leq c_{\alpha_{\max},\norm{y}_\L,\X, \norm{\mu_\omega^1}_{\rm TV}} \norm{\mu_\epsilon^k-\mu_\omega^\star
    }_{BL}^* \,. \label{eq:prop_3-1_bound_B}
\end{align}
\underline{Control of the Cesàro average:} Gathering \eqref{eq:prop_3-1_bound_A} and \eqref{eq:prop_3-1_bound_B}, we get 
\[J_W(\mu_\omega^k)-J_W(\mu_\omega^\star)\leq \alpha c_{\alpha_{\max},\norm{y}_\L,\X,\norm{\mu_\omega^1}_{\rm TV}} + \frac{1}{\alpha}(\mathcal{H}(\mu_\omega^k,\mu_\epsilon^k)-\mathcal{H}(\mu_\omega^{k+1},\mu_\epsilon^{k+1})) + c_{\alpha_{\max},\norm{y}_\L,\X,\norm{\mu_\omega^1}_{\rm TV}} \norm{\mu_\epsilon^k-\mu_\omega^\star
    }_{BL}^*\,.\] 
As $J_W$ is convex,
\[J_W\left(\frac{1}{K}\sum_{k=1}^K \mu_\omega^k\right) - J_W(\mu_\omega^\star) \leq  \frac{1}{K}\sum_{k=1}^K (J_W(\mu_\omega^k)-J_W(\mu_\omega^\star)) \,,\] leading to
\begin{equation}
    J_W\left(\frac{1}{K}\sum_{k=1}^K \mu_\omega^k\right) - J_W(\mu_\omega^\star) \leq \frac{\mathcal{H}(\mu_\omega^1,\mu_\epsilon^1)}{\alpha K}  +\alpha c_{\alpha_{\max},\norm{y}_\L,\X,\norm{\mu_\omega^1}_{\rm TV}}  +\frac{c_{\alpha_{\max},\norm{y}_\L,\X,\norm{\mu_\omega^1}_{\rm TV}}}{K}\sum_{k=1}^K \norm{\mu_\epsilon^k-\mu_\omega^\star}_{BL}^* \label{eq:prop_3-1_bound_cesaro_average_1}
\end{equation}  
using that $\mathcal{H}\geq 0$.
The triangle inequality gives
\begin{align*}
   \frac{1}{K} \sum_{k=1}^K \norm{\mu_\epsilon^k-\mu_\omega^\star
    }_{BL}^* &\leq \norm{\mu_\epsilon^1-\mu_\omega^\star}_{BL}^*+\frac{1}{K} \sum_{k=2}^K \sum_{l=1}^{k-1} \norm{\mu_\epsilon^{l+1}-\mu_\epsilon^l}_{BL}^* \,.
\end{align*}
Moreover for $l=1,\ldots,K-1$, according to the assumption on our update scheme $\mathcal{T}$, 
\begin{equation}
    \norm{\mu_\epsilon^{l+1}-\mu_\epsilon^l}_{BL}^*\leq \sum_{j=1}^p \omega_j^\epsilon \mathfrak{d}_\mathfrak{g}(x_j^{l+1}-x_j^l)\leq \norm{\mu_\epsilon^1}_{\rm TV} C_{\mathcal{T}} \beta\,. \label{eq:prop_3-1_bound_BL}
\end{equation}
Indeed, any $f$ such that $\norm{f}_{BL}\leq 1$ is 1-Lipschitz, hence leading to
\[
    \int_\X f \, \d(\mu_\epsilon^{l+1}-\mu_\epsilon^l)= \sum_{j=1}^p \omega_j^\epsilon (f(x_j^{l+1})-f(x_j^l)) 
    \leq \sum_{j=1}^p \omega_j^\epsilon \mathfrak{d}_\mathfrak{g}(x_j^{l+1}-x_j^l)\,.
\]
Using \eqref{eq:prop_3-1_bound_cesaro_average_1} together with \eqref{eq:prop_3-1_bound_BL}, we obtain
\begin{equation}\label{eq:prop_3-1_bound_cesaro_average_2}
\begin{split}
    J_W\left(\frac{1}{K}\sum_{k=1}^K \mu_\omega^k\right) - J_W(\mu_\omega^\star)
    &\leq \frac{\mathcal{H}(\mu_\omega^1,\mu_\epsilon^1)}{\alpha K}  +\alpha c_{\alpha_{\max},\norm{y}_\L,\X,\norm{\mu_\omega^1}_{\rm TV}}\\
    &\quad +c_{\alpha_{\max},\norm{y}_\L,\X,\norm{\mu_\omega^1}_{\rm TV}}\norm{\mu_\epsilon^1-\mu_\omega^\star}_{BL}^*+ K \beta \norm{\mu_\epsilon^1}_{\rm TV}  c_{\alpha_{\max},\norm{y}_\L,\X,\norm{\mu_\omega^1}_{\rm TV}} C_{\mathcal{T}} \,.
\end{split}
\end{equation}
\noindent
\underline{Choice of $\{\omega_j^\epsilon\}_{j=1}^p$ and control of $\norm{\mu_\epsilon^1-\mu_\omega^\star}_{BL}^*$:} Let $\{\X_j\}_{j=1}^p$ be disjoint regions of $\X$ such that $x_j^1 \in \X_j$ for $j=1,\ldots, p$, and let $\X_0 \coloneq \X \setminus \left( \cup_{j=1}^p \X_j\right)$. We set 
\begin{equation}
    \omega_j^\epsilon = \mu_\omega^\star(\X_j)\,, \quad \forall \: j=1,\ldots, p \,. \label{eq:prop_3-1_weights_auxilary_measure}
\end{equation} Remark that $\norm{\mu_\epsilon^1}_{\rm TV} \leq \norm{\mu_\omega^\star}_{\rm TV}$.

Let $f$ such that $\norm{f}_{BL} \leq 1$. Then $f$ is 1-Lipschitz and $\norm{f}_\infty \leq 1$.
We get 
\begin{align*}
    \int_\X f \,\d (\mu_\epsilon^1 - \mu_\omega^\star) &= \sum_{j=1}^p \int_{\X_j} f \,\d (\mu_\epsilon^1 - \mu_\omega^\star) - \int_{\X_0} f  \,\d\mu_\omega^\star \,, \\
    & \leq \sum_{j=1}^p  \mu_\omega^\star(\X_j) \sup_{x \in \X_j}\mathfrak{d}_\mathfrak{g}(x_j^1,x)  + \mu_\omega^\star(\X_0) \,, \\
    & \leq \norm{\mu_\omega^\star}_{\rm TV} \sup_{\substack{x \in \X_j \,, \\j=1,\ldots,p}}\mathfrak{d}_\mathfrak{g}(x_j^1,x) + \mu_\omega^\star(\X_0) \,,
\end{align*}
from which we deduce that 
\begin{equation}
 \norm{\mu_\epsilon^1-\mu_\omega^\star}_{BL}^* \leq \norm{\mu_\omega^\star}_{\rm TV} \sup_{\substack{x \in \X_j \,, \\j=1,\ldots,p}}\mathfrak{d}_\mathfrak{g}(x_j^1,x) + \mu_\omega^\star(\X_0) \,.
 \label{eq:prop_3-1_bound_BL_2}
\end{equation}
\underline{Control of $\mathcal{H}(\mu_\omega^1,\mu_\epsilon^1)$:} Recalling \eqref{eq:def_H_mu1_mu2} and using \eqref{eq:prop_3-1_weights_auxilary_measure}, we get
\begin{align}
    \mathcal{H}(\mu_\omega^1,\mu_\epsilon^1) &= -\sum_{\substack{j\in \{1,\ldots, p\} \,, \\ \mu_\omega^\star(\X_j)>0}}\ln\left(\frac{\omega_j^1}{\mu_\omega^\star(\X_j)} \right)\mu_\omega^\star(\X_j) - \norm{\mu_\epsilon^1}_{\rm TV}  + \norm{\mu_\omega^1}_{\rm TV} \,, \notag \\
     &= \sum_{\substack{j\in \{1,\ldots, p\} \,, \\ \mu_\omega^\star(\X_j)>0}}\left( -\ln\left(\frac{\omega_j^1}{\mu_\omega^\star(\X_j)} \right)\mu_\omega^\star(\X_j) - \mu_\omega^\star(\X_j) \right) + \norm{\mu_\omega^1}_{\rm TV} \eqcolon H_{\mu_\omega^1,\mu_\omega^\star}\,. 
\end{align}
\underline{Conclusion:}
Choosing $\alpha  = \frac{1}{\sqrt{K}} \leq 1 \eqcolon \alpha_{\max}$ and $0 \leq \beta \leq \frac{1}{K^{3/2}C_{\mathcal{T}}}$, gathering \eqref{eq:prop_3-1_bound_cesaro_average_2}, \eqref{eq:prop_3-1_bound_BL_2} and \eqref{eq:prop_3-1_weights_auxilary_measure}, we get
\begin{align*}
    J_W\left(\frac{1}{K}\sum_{k=1}^K \mu_\omega^k\right) - J_W(\mu_\omega^\star)
    &\leq \frac{H_{\mu_\omega^1,\mu_\omega^\star}}{\alpha K}  +\alpha c_{\alpha_{\max},\norm{y}_\L,\X,\norm{\mu_\omega^1}_{\rm TV}} +c_{\alpha_{\max},\norm{y}_\L,\X,\norm{\mu_\omega^1}_{\rm TV}}\left(\norm{\mu_\omega^\star}_{\rm TV} r_{\mu_\omega^1,\X} + \mu_\omega^\star(\X_0)\right) \\
    & \quad  + K \beta \norm{\mu_\omega^\star}_{\rm TV}   C_{\mathcal{T}}c_{\alpha_{\max},\norm{y}_\L,\X,\norm{\mu_\omega^1}_{\rm TV}} \,, \\
    &\lesssim_{\norm{\mu_\omega^\star}_{\rm TV},\norm{y}_\L,\X,\norm{\mu_\omega^1}_{\rm TV}} \frac{H_{\mu_\omega^1,\mu_\omega^\star} + 1}{\sqrt{K}} + r_{\mu_\omega^1,\X} + \mu_\omega^\star(\X_0)  \,.
\end{align*}

\section{Proof of Lemma \ref{lemma:descent_property}}\label{section:proof_descent_property}
\paragraph{Well-definedness of the updates}
We denote $\overline{\X^C}\coloneq (\R^d \times [u_{\min},+\infty)^d) \setminus \mathring \X$.
Remark that \[\mathfrak{d}_\mathfrak{g}(\overline{\X^C}, \tilde \X) \coloneq \min_{x \in \overline{\X^C}, \, x'\in \tilde \X} \mathfrak{d}_\mathfrak{g}(x,x')>0\,,\] since $\X,\tilde \X$ are compact sets such that $\tilde \X \subset \mathring \X$. This minimum distance depends on $\X, \tilde \X$, and $\tau$ (through the expression of $\mathfrak{d}_\mathfrak{g}$—\cf Appendix \ref{section:geodesics_and_distances}).
It therefore suffices to choose $\beta$ small enough so that
\[\norm{\beta \nabla^\mathfrak{g}J_{\mu}'(x_j^k)}_{x_j^k} \leq \mathfrak{d}_\mathfrak{g}(\overline{\X^C}, \tilde \X)\] whenever $x_j^k \in \tilde \X$.
This guarantees that $x_j^{k+1} = \exp_{x_j^k}(-\beta \nabla^\mathfrak{g}J_{\mu}'(x_j^k))$ remains in $\X$.

Moreover, we get from Lemmas \ref{lemma:control_Jp} and \ref{lemma:control_tv_iterates} that for $\mu \in \M(\X)^+$ and $x\in \X$, $\norm{\nabla^\mathfrak{g}J_{\mu}'(x)}_{x} \leq \sqrt{B_{11}}(\norm{\mu}_{\rm TV}+ \norm{y}_\L)$ and that $\norm{\mu_\omega^k}_{\rm TV} \leq \max\left\{\frac{1}{C_{\X}}\norm{y}_\L e^{\norm{y}_\L} \,, \, \norm{\mu_\omega^1}_{\rm TV} \right\}$ (we used that $\alpha \leq 1$).

Therefore, the required smallness of $\beta$ depends only on $\mathfrak{d}_\mathfrak{g}(\overline{\X^C}, \tilde \X), C_\X, \norm{y}_\L, \norm{\mu_\omega^1}_{\rm TV},B_{11}$; equivalently, on $\X, \tilde \X, \norm{y}_\L, \norm{\mu_\omega^1}_{\rm TV}, \tau$ (with the dependence on $d$ through $B_{11}$ encoded in $\X$). 

\paragraph{Descent property}
Let $k\geq 1$.
Using \eqref{eq:dev_J_mu_plus_sigma_minus_J_mu} we have 
\begin{align*}
 J_W(\mu_\omega^{k+1})-J_W(\mu_\omega^k) &=  \int_{\X} J_{\mu_\omega^k}'(x)  \,\d(\mu_\omega^{k+1}-\mu_\omega^k) (x) + \frac{1}{2} \norm{\Psi(\mu_\omega^{k+1}-\mu_\omega^k)}_\L^2\,.
\end{align*}
\underline{Study of the second order term:} We write $\tilde \mu_\omega^{k+1}=\sum_{j=1}^p\omega_j^k \delta_{x_j^{k+1}}$. 
We have
\begin{align*}
\frac{1}{2} \norm{\Psi(\mu_\omega^{k+1}-\mu_\omega^k)}_\L^2&\leq \norm{\Psi(\mu_\omega^{k+1}-\tilde \mu_\omega^{k+1})}_\L^2 +\norm{\Psi(\mu^{k}-\tilde \mu_\omega^{k+1})}_\L^2\,, \\
&=\underbrace{\norm{\sum_{j=1}^p \omega_j^k (e^{-\alpha J_{\mu_\omega^k}'(x_j^k)}-1)\Psi\delta_{x_j^{k+1}}}_\L^2}_{E_1} +\underbrace{\norm{\sum_{j=1}^p \omega_j^k \Psi(\delta_{x_j^{k+1}}-\delta_{x_j^{k}})}_\L^2 }_{E_2} \,.
\end{align*}
As $J_{\mu_\omega^k}'(x_j^k) \leq B_{\rm TV} + \norm{y}_\L + 1$ (we used Lemmas \ref{lemma:control_tv_iterates} and \ref{lemma:control_Jp} with $\kappa \leq 1$), for $\alpha$ small enough depending only on $B_{\rm TV}, \norm{y}_\L$, it holds that $|e^{-\alpha J_{\mu_\omega^k}'(x_j^k)}-1|\leq 2\alpha |J_{\mu_\omega^k}'(x_j^k)|$. So
\begin{align}
E_1 &\leq \norm{\mu_\omega^k}_{\rm TV} \sum_{j=1}^p \omega_j^k (e^{-\alpha J_{\mu_\omega^k}'(x_j^k)}-1)^2 \norm{\Psi\delta_{x_j^{k+1}}}_\L^2 \,, \notag \\
&\leq 4 B_{\rm TV}  \sum_{j=1}^p \omega_j^k \alpha^2 |J_{\mu_\omega^k}'(x_j^k)|^2 \,. \label{eq:descent_property_bound_E_1}
\end{align}
Using Lemma \ref{lemma:taylor_expansions_kernel}, we also have 
\begin{align}
    E_2 & \leq \norm{\mu_\omega^k}_{\rm TV} \sum_{j=1}^p \omega_j^k \norm{\Psi(\delta_{x_j^{k+1}}-\delta_{x_j^k})}_\L^2 \,, \notag \\
    &\leq B_{02} \norm{\mu_\omega^k}_{\rm TV} \sum_{j=1}^p \omega_j^k  \mathfrak{d}_{\mathfrak{g}}(x_j^k,x_j^{k+1})^2\,, \notag \\
    & \leq B_{02}B_{\rm TV} \beta^2 \sum_{\substack{1 \leq j \leq p, \\ x_j^k \in \tilde \X }} \omega_j^k  \norm{\nabla^{\mathfrak{g}}J_{\mu_\omega^k}'(x_j^k)}_{x_j^k}^2 \,. \label{eq:descent_property_bound_E_2}
\end{align}
\underline{Study of the first order term:} 
\begin{align*}
    \int_{\X} J_{\mu_\omega^k}'(x) \, \d(\mu_\omega^{k+1}-\mu_\omega^k) &= \int_{\X} J_{\mu_\omega^k}'(x) \, \d(\mu_\omega^{k+1}-\tilde \mu_\omega^{k+1})+ \int_{\X} J_{\mu_\omega^k}'(x) \, \d(\tilde \mu_\omega^{k+1}-\mu_\omega^k) \,, \\
    &=\underbrace{\sum_{j=1}^p (\omega_j^{k+1}-\omega_j^k) J_{\mu_\omega^k}'(x_j^k) }_{A}+ \underbrace{\sum_{j=1}^p \omega_j^k \left(J_{\mu_\omega^k}'(x_j^{k+1})-J_{\mu_\omega^k}'(x_j^{k})\right)}_{B} \\
    &\quad + \underbrace{\sum_{j=1}^p (\omega_j^{k+1}-\omega_j^k) (J_{\mu_\omega^k}'(x_j^{k+1})-J_{\mu_\omega^k}'(x_j^k))}_{E_3}\,.
\end{align*}
$E_3$ is a second order term. Indeed, using Lemmas \ref{lemma:control_Jp} and \ref{lemma:control_tv_iterates},
\begin{align}
|E_3|&\leq \sum_{j=1}^p \omega_j^k\left|e^{-\alpha J_{\mu_\omega^k}'(x_j^k)}-1\right| |J_{\mu_\omega^k}'(x_j^{k+1})-J_{\mu_\omega^k}'(x_j^k)| \,, \notag \\
& \leq 2\alpha \beta \sqrt{B_{02}}(\norm{y}_\L + \norm{\mu_\omega^k}_{\rm TV}) \sum_{\substack{1 \leq j \leq p, \\ x_j^k \in \tilde \X }} \omega_j^k |J_{\mu_\omega^k}'(x_j^k)|\norm{\nabla^\mathfrak{g} J_{\mu_\omega^k}'(x_j^k)}_{x_j^k}\,, \notag \\
&\leq \sqrt{B_{02}}  (\norm{y}_\L+ B_{\rm TV})  \sum_{\substack{1 \leq j \leq p, \\ x_j^k \in \tilde \X }} \omega_j^k \left(\alpha^2|J_{\mu_\omega^k}'(x_j^k)|^2+\beta^2\norm{\nabla^\mathfrak{g} J_{\mu_\omega^k}'(x_j^k)}_{x_j^k}^2\right) \,. \label{eq:descent_property_bound_E_3}
\end{align}
Again using the bound on $J_{\mu_\omega^k}'(x_j^k)$ from Lemma \ref{lemma:control_Jp}, we also have, for $\alpha$ small enough only depending on $B_{\rm TV}, \norm{y}_\L$, \[|e^{-\alpha J_{\mu_\omega^k}'(x_j^k)}-1 +\alpha J_{\mu_\omega^k}'(x_j^k)|\leq \alpha^2 J_{\mu_\omega^k}'(x_j^k)^2\,.\] By dealing separately with the cases $J_{\mu_\omega^k}'(x_j^k)>0$ and $J_{\mu_\omega^k}'(x_j^k)\leq 0$, we get
\begin{align}
    A&=\sum_{j=1}^p \omega_j^k(e^{-\alpha J_{\mu_\omega^k}'(x_j^k)}-1) J_{\mu_\omega^k}'(x_j^k) \,, \notag \\
    &\leq  -\alpha \sum_{j=1}^p \omega_j^k |J_{\mu_\omega^k}'(x_j^k)|^2 + \alpha^2\sum_{j=1}^p \omega_j^k |J_{\mu_\omega^k}'(x_j^k)|^3 \,, \notag \\
    & \leq -\alpha \sum_{j=1}^p \omega_j^k |J_{\mu_\omega^k}'(x_j^k)|^2 + \alpha^2(B_{\rm TV}+ \norm{y}_\L +1)\sum_{j=1}^p \omega_j^k |J_{\mu_\omega^k}'(x_j^k)|^2 \,. \label{eq:descent_property_bound_A}
\end{align}
To deal with $B$, we consider 2 cases: if $x_j^k\in \X\setminus \tilde \X$, $x_j^{k+1}=x_j^k$ and $\omega_j^k \left(J_{\mu_\omega^k}'(x_j^{k+1})-J_{\mu_\omega^k}'(x_j^{k+1})\right)=0$. If $x_j^k \in \tilde \X$, by definition of the update using the exponential map we have (using the control on $H^\mathfrak{g} J_{\mu_\omega^k}'$ from Lemma \ref{lemma:control_Jp})
\begin{align*}
    J_{\mu_\omega^k}'(x_j^{k+1})-J_{\mu_\omega^k}'(x_j^k)&=\innerprod{\dot \gamma_{x_j^k\to x_j^{k+1}}(0)^T }{\nabla^{\mathfrak{g}} J_{\mu_\omega^k}'(x_j^k)}_{x_j^k} \\
    &\quad + \int_0^1 (1-y) \dot \gamma_{x_j^k\to x_j^{k+1}}(y)^T H^\mathfrak{g} J_{\mu_\omega^k}'(\gamma_{x_j^k\to x_j^{k+1}}(y)) \dot \gamma_{x_j^k\to x_j^{k+1}}(y) \, \d y \,, \\
    &\leq -\beta \norm{\nabla^{\mathfrak{g}} J_{\mu_\omega^k}'(x_j^k)}_{x_j^k}^2 + \frac{1}{2}\beta^2 \norm{\nabla^{\mathfrak{g}} J_{\mu_\omega^k}'(x_j^k)}_{x_j^k}^2 \sqrt{B_{22}}(\norm{y}_\L +B_{\rm TV})\,.
\end{align*}
Finally, 
\begin{align}
    B&= \sum_{\substack{1\leq j\leq p\,,\\ x_j^k \in \tilde\X}} \omega_j^k \left(J_{\mu_\omega^k}'(x_j^{k+1})-J_{\mu_\omega^k}(x_j^k)\right) \notag \,,\\
    &\leq -\beta \sum_{\substack{1\leq j\leq p\,,\\ x_j^k \in \tilde\X}} \omega_j \norm{\nabla^{\mathfrak{g}} J_{\mu_\omega^k}'(x_j^k)}_{x_j^k}^2 + \frac{1}{2}\beta^2 \sum_{\substack{1\leq j\leq p\,,\\ x_j^k \in \tilde\X}} \omega_j  \norm{\nabla^{\mathfrak{g}} J_{\mu_\omega^k}'(x_j^k)}_{x_j^k}^2 \sqrt{B_{22}}(\norm{y}_\L +B_{\rm TV})\,. \label{eq:descent_property_bound_B}
\end{align}
\underline{Choice of $\alpha, \beta$ and end of the proof:}
Combining \eqref{eq:descent_property_bound_E_1}, \eqref{eq:descent_property_bound_E_2}, \eqref{eq:descent_property_bound_E_3}, \eqref{eq:descent_property_bound_A}, \eqref{eq:descent_property_bound_B}, we get 
\begin{align*}
  J_W(\mu_\omega^{k+1})-J_W(\mu_\omega^k)&\leq -\alpha \sum_{j=1}^p \omega_j^k |J_{\mu_\omega^k}'(x_j^k)|^2  -\beta \sum_{\substack{1\leq j\leq p, \\ x_j^k \in \tilde \X} } \omega_j^k \norm{\nabla^{\mathfrak{g}} J_{\mu_\omega^k}'(x_j^k)}_{x_j^k}^2 
  \\&\quad + \alpha^2(5B_{\rm TV}+ \norm{y}_\L +1 + \sqrt{B_{02}}  (\norm{y}_\L+ B_{\rm TV}))\sum_{j=1}^p \omega_j^k |J_{\mu_\omega^k}'(x_j^k)|^2 
  \\&\quad + \beta^2 \left(\frac{1}{2}\sqrt{B_{22}}(\norm{y}_\L +B_{\rm TV}) +  B_{02}B_{\rm TV} + \sqrt{B_{02}}  (\norm{y}_\L+ B_{\rm TV})\right)
  \\&\qquad\quad \times \sum_{\substack{1 \leq j \leq p, \\ x_j^k \in \tilde \X }} \omega_j^k \norm{\nabla^{\mathfrak{g}} J_{\mu_\omega^k}'(x_j^k)}_{x_j^k}^2 \,.
\end{align*}
We can choose $\alpha,\beta$ small enough only depending on $\X,\tilde \X,\norm{y}_\L,\tau, \norm{\mu_\omega^1}_{\rm TV}$ (the dependence on $d$ is captured by the dependence on $\X$) such that the gradients steps are well-defined and ensuring that for all $k\geq 1$,
\[J_W(\mu_\omega^{k+1})-J_W(\mu_\omega^k)\leq -\frac{1}{2} \int_{\tilde \X} \left(\alpha |J_{\mu_\omega^k}'(x)|^2 +\beta \norm{\nabla^{\mathfrak{g}} J_{\mu_\omega^k}'(x)}_{x}^2\right) \, \d\mu_\omega^k(x) -\frac{1}{2} \int_{\X\setminus \tilde \X} \alpha |J_{\mu_\omega^k}'(x)|^2  \, \d\mu_\omega^k(x) \,.\]

\section{Proof of Theorem \ref{th:exponential_cv}}\label{section:proof_exponential_cv}
We rewrite here the calculations of the proof of \citep[Corollary 3.5]{cpgd_chizat} with the geometric objects induced by our particular geometry, including geodesics and Riemannian derivatives. In particular, we make use of the specific retraction \eqref{eq:RGD}, corresponding to Riemannian gradient descent.

\paragraph{Roadmap of the proof}
In the next subsections, we assume that Assumption~\ref{assumption:non_degenerate_certif_solution} holds. We fix the parameters~$\alpha$ and $\beta$ as in Lemma~\ref{lemma:descent_property}, so that for all $k \geq 1$,
\begin{equation}\label{eq:objective_decrease_descent_property}
    J_W(\mu_\omega^{k+1})-J_W(\mu_\omega^k)\leq -\frac{1}{2} \min\{\alpha,\beta\} \left(\int_{\tilde \X} \left( |J_{\mu_\omega^k}'(x)|^2 +\norm{\nabla^{\mathfrak{g}} J_{\mu_\omega^k}'(x)}_{x}^2\right) \, \d\mu_\omega^k(x) + \int_{\X\setminus \tilde \X}  |J_{\mu_\omega^k}'(x)|^2  \, \d\mu_\omega^k(x) \right)\,.
\end{equation}
This estimate on the objective decrease serves as the starting point for the convergence analysis of the algorithm.

\medskip

When the solution $\mu_\omega^\star$ is discrete and of the form $\sum_{j=1}^{p^\star} \omega_j^\star \delta_{x_j^\star}$, we define the associated near and far regions, for $r>0$, as \[\X_j^{\rm near}(r)\coloneq \{x \in \X \: :\: \mathpzc{d}(x,x_j^\star)\leq r\}\,, \qquad \X^{\rm far}(r)\coloneq \X \setminus \X^{\rm near}(r) \qquad \text{with}\quad \X^{\rm near}(r)=\bigcup_j \X_j^{\rm near}(r)\,.\]
We choose $r$ small enough so that $(\X_j^{\rm near}(r))_{1\leq j \leq p^\star}$ are disjoint and contained in $\tilde \X$ (Lemma \ref{lemma:control_certif_near_regions}). In particular, this implies that $\X \setminus \tilde \X \subset \X^{\rm far}(r)$. As a consequence, \eqref{eq:objective_decrease_descent_property} yields
\begin{equation}\label{eq:objective_decrease_G_mu}
    J_W(\mu_\omega^{k+1})-J_W(\mu_\omega^k)\leq - \frac{1}{2} \min\{\alpha, \beta\} G_{\mu_\omega^k}
\end{equation}
where for $\mu \in \M(\X)^+$,
\begin{equation}\label{eq:def_G_mu}
    G_\mu\coloneq \int_{\X^{\rm near}(r)} \left(|J_\mu'(x)|^2 + \norm{\nabla^{\mathfrak{g}} J_\mu'(x)}_{x}^2\right) \, \d\mu(x) +\int_{\X^{\rm far}(r)} |J_\mu'(x)|^2  \, \d\mu(x) \,.
\end{equation}

\medskip
Using Assumption \ref{assumption:non_degenerate_certif_solution}, we derive quantitative controls on the certificate $\eta^\star$ over the near and far regions (see Appendix \ref{section:controls_certificate_solution_near_far_regions}).

In Appendix \ref{section:lower_bound_objective_decrease}, we establish a lower bound on the objective decrease $G_\mu$ for measures $\mu\in \M(\X)^+$ such that $\norm{\mu}_{\rm TV} \leq B_{\rm TV}$.

In Appendix \ref{section:upper_bound_optimality_gap}, we derive an upper bound on the optimality gap $J_W(\mu)-J_W(\mu_\omega^\star)$. 

Finally, in Appendix \ref{section:assembling_lower_upper_bounds}, we combine these estimates. For $r$ sufficiently small, and for $\mu$ such that $J_W(\mu)$ is close enough to $J_W(\mu_\omega^\star)$, we show that $G_{\mu}\gtrsim J_W(\mu)-J_W(\mu_\omega^\star)$.

Recalling \eqref{eq:objective_decrease_G_mu}, this relation leads to exponential convergence, provided that the initialization is chosen sufficiently close to the solution (see Appendix~\ref{section:proof_exponential_cv_from_upper_lower_bounds}).

Throughout the proof, the notations $A$, $B$, and $E_i$ may refer to different quantities depending on the subsection.

\subsection{Controls of the certificate over near and far regions}\label{section:controls_certificate_solution_near_far_regions}
\subsubsection{Control of the Fisher-Rao metric}
\begin{lemma}[Bounding the semi-distance by the Fisher–Rao distance]\label{lemma:bound_semidist_fisher_rao}
    For all $x,x' \in \R^d \times [u_{\min},+\infty)^d$ such that $\mathpzc{d}(x,x')\leq \frac{0.3025}{\sqrt{d}}$, it holds that \[\mathfrak{d}_\mathfrak{g}(x,x')^2 \leq \frac{1}{\bar\varepsilon_2}\mathpzc{d}(x,x')^2 \quad \text{with} \quad  \bar\varepsilon_2=0.13139\] along with \[\mathfrak{d}_\mathfrak{g}(x,x')^2 \geq \frac{1}{\bar\varepsilon_3}\mathpzc{d}(x,x')^2 \quad \text{with} \quad  \bar\varepsilon_3=2.84\,.\]
The radius $\frac{0.3025}{\sqrt{d}}$ and the constants $\bar\varepsilon_2$, $\bar\varepsilon_3$ are the local positive curvature parameters of $K_{\rm norm}$ obtained in \citep[Theorem 5.3 and Lemma 5.4]{article_giard_decastro_marteau}.
\end{lemma}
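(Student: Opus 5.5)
The plan is to compare both quantities along the Fisher--Rao geodesic $\gamma=\gamma_{x\to x'}$ through a second-order Taylor expansion of the kernel, exactly as in the proof of Lemma \ref{lemma:taylor_expansions_kernel}. We use
\[
\mathpzc{d}(x,x')^2=-2\ln K_{\rm norm}(x,x')
\]
and the identity
\[
2-2K_{\rm norm}(x,x') = -2\int_0^1(1-y)\,\dot\gamma(y)^T H_2^{\mathfrak{g}}K_{\rm norm}(x,\gamma(y))\,\dot\gamma(y)\,\d y .
\]
This identity holds because $K_{\rm norm}(x,x)=1$ and $\nabla_2K_{\rm norm}(x,x)=0$. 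Since $\gamma$ has constant speed, $\norm{\dot\gamma(y)}_{\gamma(y)}=\mathfrak{d}_\mathfrak{g}(x,x')$.

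For the upper bound on $\mathfrak{d}_\mathfrak{g}$, we first observe that the semi-distance is monotone along geodesics, i.e.\ $\mathcal{G}_{x}(B_{\mathpzc{d}}(x,r))\subset B_{\mathpzc{d}}(x,r)$ (Section \ref{section:geodesics_and_distances}). Hence every $\gamma(y)$ satisfies $\mathpzc{d}(x,\gamma(y))\le \frac{0.3025}{\sqrt d}$. The local curvature part of Lemma \ref{lemma:controls_kernel}, applied with the roles of the variables swapped by symmetry of $K_{\rm norm}$, then gives
\[
-\dot\gamma^T H_2^{\mathfrak{g}}K_{\rm norm}(x,\gamma(y))\,\dot\gamma\ge \bar\varepsilon_2\,\mathfrak{d}_\mathfrak{g}(x,x')^2 .
\]
Integrating yields $1-K_{\rm norm}(x,x')\ge \frac{\bar\varepsilon_2}{2}\mathfrak{d}_\mathfrak{g}^2$. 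Finally, $-\ln K\ge 1-K$ gives
\[
\mathpzc{d}^2=-2\ln K\ge 2(1-K)\ge \bar\varepsilon_2\,\mathfrak{d}_\mathfrak{g}^2,
\]
which is the first inequality.

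For the lower bound on $\mathfrak{d}_\mathfrak{g}$, we need the reverse comparison $\mathpzc{d}^2\le \bar\varepsilon_3\,\mathfrak{d}_\mathfrak{g}^2$, and we cannot get it from $1-K$ alone because $-\ln K$ exceeds $1-K$. We plan to track $\ln K$ directly along the geodesic. Set $\phi(y)=-2\ln K_{\rm norm}(x,\gamma(y))$. Then $\phi(0)=\phi'(0)=0$ and
\[
\phi''=-2\,\frac{\dot\gamma^TH_2^{\mathfrak{g}}K\,\dot\gamma}{K}+2\,\frac{(\dot\gamma^T\nabla_2K)^2}{K^2}.
\]
On the ball we have $K\ge e^{-0.3025^2/(2d)}$. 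The Hessian term is bounded through $B_{02}^e$ or the sharper local control of \citep[Lemma 5.4]{article_giard_decastro_marteau}, and the gradient term through $B_{01}$. Together these give $\phi''\le 2\bar\varepsilon_3\,\mathfrak{d}_\mathfrak{g}^2$, hence $\mathpzc{d}^2=\phi(1)\le\bar\varepsilon_3\,\mathfrak{d}_\mathfrak{g}^2$.

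The main obstacle is obtaining the specific constant $2.84$, since the crude global bounds ($B_{01}=\sqrt{2d}$ and so on) carry a $d$-dependence. We intend to rely on the cited local curvature result \citep[Lemma 5.4]{article_giard_decastro_marteau}, which supplies the upper curvature parameter $\bar\varepsilon_3$ on the same ball. Failing that, we would exploit the product structure of $K_{\rm norm}$: it makes $\ln K$ additive over coordinates, which reduces the computation to the $d=1$ location--scale kernel with radius $0.3025/\sqrt d\le 0.3025$.
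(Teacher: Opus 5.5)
Your proof of the first inequality is the paper's proof: the second-order Taylor expansion of $K_{\rm norm}$ along $\gamma_{x\to x'}$, the local curvature bound of Lemma \ref{lemma:controls_kernel} (valid along the whole geodesic because $\mathpzc{d}$ is monotone along geodesics), and then $-\ln K_{\rm norm}\geq 1-K_{\rm norm}$, which the paper writes as $z^2\geq 2(1-e^{-z^2/2})$. For the reverse inequality the paper also just invokes \citep[Lemma 5.4]{article_giard_decastro_marteau}, so your main plan coincides with it; the $\phi''$ fallback is not needed, and as sketched it would not return exactly $\bar\varepsilon_3=2.84$, because the squared-gradient term and the $1/K_{\rm norm}$ factors enlarge whatever Hessian bound you feed into it.
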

\begin{proof}
    Let $x,x' \in \R^d \times [u_{\min},+\infty)^d$ such that $\mathpzc{d}(x,x')\leq \frac{0.3025}{\sqrt{d}}$. For $y \in [0,1]$, $\gamma_{x \to x'}(y) \in \R^d \times [u_{\min},+\infty)^d$ and $\mathpzc{d}(\gamma_{x \to x'}(y),x)\leq \mathpzc{d}(x,x')\leq \frac{0.3025}{\sqrt{d}}$ (\cf Appendix \ref{section:geodesics_and_distances}). We then use the control on the local curvature (Lemma \ref{lemma:controls_kernel}) to deduce that $-K^{(20)}(\gamma_{x \to x'}(y),x)[v,v] \geq \bar \varepsilon_2 \norm{v}_{\gamma_{x \to x'}(y)}^2$ for $v\in \R^{2d}$.

    So
    \begin{align*}
    1-e^{-\frac{1}{2}\mathpzc{d}(x,x')^2}&=1-K_{\rm norm}(x',x) \,, \\
    &=-\int_{0}^1(1-y) K^{(20)}(\gamma_{x \to x'}(y),x)[\dot \gamma_{x \to x'}(y), \dot \gamma_{x \to x'}(y)] \, \d y\,, \\
        &\geq \int_{0}^1(1-y) \bar \varepsilon_2 \norm{\dot \gamma_{x \to x'}(y)}_{\gamma_{x \to x'}(y)}^2\, \d y\,, \\
        &= \frac{\bar \varepsilon_2}{2} \mathfrak{d}_\mathfrak{g}(x,x')^2 \,.
    \end{align*}
    As $z^2 \geq 2(1-e^{-\frac{1}{2}z^2})$, we deduce that 
    \[\mathfrak{d}_\mathfrak{g}(x,x')^2 \leq \frac{1}{\bar\varepsilon_2}\mathpzc{d}(x,x')^2 \,.\]
   The second inequality comes from \citep[Lemma 5.4]{article_giard_decastro_marteau}.
\end{proof}

\begin{lemma}[Local variation of the metric tensor]\label{lemma:local_variation_metric}
For $x, x' \in \R^d \times [u_{\min},+\infty)^d$ such that $\mathpzc{d}(x,x') \leq 1$,
\[\norm{\mathfrak{g}_{x'}^{1/2} \mathfrak{g}_{x}^{-1/2}}_2 \leq \sqrt{6}e \eqcolon C_\mathfrak{g}\,. \]
\end{lemma}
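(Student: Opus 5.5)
The plan is to exploit the fact that $\mathfrak{g}_x$ and $\mathfrak{g}_{x'}$ are both diagonal. This makes $\mathfrak{g}_{x'}^{1/2}\mathfrak{g}_x^{-1/2}$ diagonal, so its spectral norm is the maximum, over $k\in\{1,\ldots,d\}$ and over the two blocks ($t_k$ and $u_k$), of the square roots of the ratios of corresponding diagonal entries of \eqref{eq:def_fisher_rao_metric}. Write $a_k\coloneq 2u_k^2+\tau^2$ and $a_k'\coloneq 2{u_k'}^2+\tau^2$, and set $\rho_k\coloneq\sqrt{a_k/a_k'}$. The two entries to bound are
\[
\sqrt{\frac{a_k}{a_k'}}=\rho_k \quad\text{($t_k$ block)}, \qquad \sqrt{\frac{2{u_k'}^2\,a_k^2}{2u_k^2\,{a_k'}^2}}=\frac{u_k'}{u_k}\,\rho_k^2 \quad\text{($u_k$ block)}.
\]

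First, I will extract a bound on $\rho_k$ from $\mathpzc{d}(x,x')\le 1$. Every summand in \eqref{eq:def_semi_distance} is nonnegative: the logarithmic term is nonnegative by AM--GM, since $u_k^2+{u_k'}^2+\tau^2=\frac{a_k+a_k'}{2}\ge\sqrt{a_ka_k'}$. Hence each logarithmic term is at most $1$, which gives
\[
\frac{1}{2}\left(\rho_k+\rho_k^{-1}\right)\le e .
\]
This forces $\rho_k\le 2e$, and by symmetry $\rho_k^{-1}\le 2e$. This already controls the $t_k$ block by $2e\le\sqrt6\,e$.

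For the $u_k$ block, I will use $\tau\le u_{\min}\le u_k,u_k'$. This gives $2u_k^2\le a_k\le 3u_k^2$, and the same for $u_k'$. Consequently
\[
\frac{{u_k'}^2}{u_k^2}\le\frac{a_k'/2}{a_k/3}=\frac{3}{2}\rho_k^{-2},
\]
so the square of the $u_k$ entry is at most $\frac32\rho_k^{-2}\rho_k^4=\frac32\rho_k^2$. Taking square roots bounds the entry by $\sqrt{3/2}\,\rho_k\le\sqrt{3/2}\cdot 2e=\sqrt6\,e$.

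Taking the maximum over all entries then yields $C_\mathfrak{g}=\sqrt6\,e$. The only delicate step is the $u_k$ block. There the factor $u_k'/u_k$ must be traded for a power of $\rho_k$ using the constraint $\tau\le u_{\min}$, and the exponents must be tracked so that a single power of $\rho_k$ survives; this is where the constant $\sqrt6$ comes from. The rest of the argument is elementary.
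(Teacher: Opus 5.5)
Your proof is correct and follows essentially the same route as the paper. In both, the diagonal structure reduces the spectral norm to a maximum of entry ratios, the constraint $\tau\le u_{\min}$ converts the $u_k$-entry into at most $\sqrt{3/2}$ times the $t_k$-entry, and the logarithmic term of the semi-distance being at most $1$ yields $\sqrt{a_k/a_k'}\le 2e$. The only difference is that the paper imports the slightly sharper bound $e+\sqrt{e^2-1}$ from an external equation, whereas you derive the needed bound directly via AM--GM.
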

\begin{proof}
    Recall that \[\mathfrak{g}_x=\diag \left(\frac{1}{2u_1^2+\tau^2},\ldots, \frac{1}{2u_d^2+\tau^2},\frac{2u_1^2}{(2u_1^2+\tau^2)^2} ,\ldots, \frac{2u_d^2}{(2u_d^2+\tau^2)^2} \right)\,.\] 
    So the eigenvalues of $\mathfrak{g}_{x'}^{1/2} \mathfrak{g}_{x}^{-1/2}$ are bounded by $\lambda\coloneq \max_i \left\{ \frac{\sqrt{2(u_i)^2+\tau^2}}{\sqrt{2(u_i')^2+\tau^2}}, \frac{(2(u_i)^2+\tau^2)\sqrt{2(u_i')^2}}{(2(u_i')^2+\tau^2)\sqrt{2u_i^2}}\right\}$. As $\tau \leq u_{\min}$, $\lambda \leq \sqrt{\frac{3}{2}} \max_i \frac{\sqrt{2(u_i)^2+\tau^2}}{\sqrt{2(u_i')^2+\tau^2}}$. From \citep[Equation (34)]{article_giard_decastro_marteau}, as $\frac{(u_i')^2+u_i^2+\tau^2}{\sqrt{2( u_i')^2+\tau^2}\sqrt{2u_i^2+\tau^2}}\leq e$ we have \[\frac{\sqrt{2(u_i)^2+\tau^2}}{\sqrt{2(u_i')^2+\tau^2}} \leq e+\sqrt{e^2-1} \leq 2e\,.\]
\end{proof}

\subsubsection{Implications of Assumption \ref{assumption:non_degenerate_certif_solution}}
\begin{lemma}[Control of the certificate on near and far regions]\label{lemma:control_certif_near_regions}
Under Assumption \ref{assumption:non_degenerate_certif_solution}, we can choose $0\leq r_{\max}\leq \frac{0.3025}{\sqrt{d}}$ small enough, depending on $d, \X, \tilde \X, \tau, \mu_\omega^\star$, such that
    \begin{enumerate}
        \item $(\X_j^{\rm near}(r_{\max}))_{j=1}^{p^\star}$ are non-intersecting regions contained in $\tilde \X$,
        \item $\eta^\star(x) \leq 1-\varepsilon_0(r_{\max})$ for all $x\in \X^{\rm far}(r_{\max})$, for $\varepsilon_0(r_{\max})>0$ depending on $r_{\max}$, $\eta^\star$, $\X$,
        \item $-v H^\mathfrak{g} \eta^\star(x) v \geq \frac{\varepsilon_2}{2} \norm{v}_x^2$ for all $v\in \R^{2d}$, $x\in \mathcal{G}_{x_j^\star}(B_{\mathpzc{d}}(x_j^\star,r_{\max}))$, implying that $\eta^\star(x) \leq 1 - \frac{\varepsilon_2}{4} \mathfrak{d}_\mathfrak{g}(x,x_j^\star)^2$ for all $x\in\X_j^{\rm near}(r_{\max})$,
\item For $0<r \leq \min\left\{r_{\max}, \sqrt{\frac{4\varepsilon_0(r_{\max}) \bar\varepsilon_3}{\varepsilon_2}}\right\}\eqcolon r_{\max}'$ and $x\in \X^{\rm far}(r)$, $\eta^\star(x)\leq 1-\varepsilon_0' r^2$ (with $\varepsilon_0'\coloneq \frac{\varepsilon_2}{4\bar\varepsilon_3}$).
    \end{enumerate} 
\end{lemma}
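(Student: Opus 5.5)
We prove the four items in order, choosing $r_{\max}$ as the minimum of finitely many positive thresholds.

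\textbf{Item 1.} The points $x_j^\star$ are finitely many, distinct, and lie in $\mathring{\tilde \X}$. Since $\mathpzc{d}$ is continuous and vanishes only on the diagonal ($K_{\rm norm}(x,x')=1$ iff $x=x'$), two facts hold for small $r$. First, the sublevel sets $B_{\mathpzc{d}}(x_j^\star,r)$ shrink to $\{x_j^\star\}$ as $r\to 0$. Second, by compactness they become pairwise disjoint once $2r$ is below a threshold fixed by $\min_{i\neq j}\mathpzc{d}(x_i^\star,x_j^\star)$. The triangle-type argument can be avoided: take $r$ smaller than half the positive distance between the compact sets, shrunk via continuity of $\mathpzc{d}$. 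For the same reason these sets are contained in $\tilde \X$ for $r$ small, because $x_j^\star$ has a neighbourhood inside $\tilde \X$.

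\textbf{Item 3 (before item 2).} By Assumption \ref{assumption:non_degenerate_certif_solution}, $-v^TH^\mathfrak{g}\eta^\star(x_j^\star)v\geq\varepsilon_2\norm{v}_{x_j^\star}^2$. The normalized matrix $\mathfrak{g}_x^{-1/2}H^\mathfrak{g}\eta^\star(x)\mathfrak{g}_x^{-1/2}$ is continuous in $x$, since $\eta^\star$ is smooth and the Christoffel symbols and $\mathfrak{g}$ are smooth for $u\geq u_{\min}>0$. Hence there is a neighbourhood of $x_j^\star$ on which its largest eigenvalue is at most $-\varepsilon_2/2$. By \citep[Lemma 5.2]{article_giard_decastro_marteau}, $\mathcal{G}_{x_j^\star}(B_{\mathpzc{d}}(x_j^\star,r))\subset B_{\mathpzc{d}}(x_j^\star,r)$, so for $r$ small the whole geodesic hull lies in that neighbourhood. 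For $x\in\X_j^{\rm near}(r)$, we Taylor expand along the geodesic $\gamma=\gamma_{x_j^\star\to x}$:
\[
\eta^\star(x)=\eta^\star(x_j^\star)+\langle\dot\gamma(0),\nabla^\mathfrak{g}\eta^\star(x_j^\star)\rangle+\int_0^1(1-y)\,\dot\gamma(y)^TH^\mathfrak{g}\eta^\star(\gamma(y))\dot\gamma(y)\,\d y .
\]
Here $\eta^\star(x_j^\star)=1$ and the gradient term vanishes. The integral is at most $-\tfrac{\varepsilon_2}{2}\cdot\tfrac12\,\mathfrak{d}_\mathfrak{g}(x,x_j^\star)^2$, because geodesics have constant speed $\mathfrak{d}_\mathfrak{g}$. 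This gives the bound with $\varepsilon_2/4$.

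\textbf{Item 2.} Fix $r_{\max}$ as above. The set $\X^{\rm far}(r_{\max})=\X\setminus\bigcup_j\{\mathpzc{d}(\cdot,x_j^\star)<r_{\max}\}$ is closed in $\X$, hence compact. The function $\eta^\star$ is continuous, $\eta^\star\le1$ on $\X$, and $\eta^\star=1$ only at the $x_j^\star$, none of which lie in this set. Therefore $\max_{\X^{\rm far}(r_{\max})}\eta^\star<1$, and we define $\varepsilon_0(r_{\max})$ as the gap. The one subtlety is that $\X^{\rm far}$ is defined with a non-strict inequality. We handle it by using the open ball in the definition or by slightly shrinking; since $\X^{\rm far}(r)$ with $\mathpzc{d}>r$ has closure inside $\{\mathpzc{d}\geq r\}$, compactness still applies.

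\textbf{Item 4.} Let $x\in\X^{\rm far}(r)$ with $r\le r'_{\max}$. If $x\in\X^{\rm far}(r_{\max})$, item 2 gives $\eta^\star(x)\le1-\varepsilon_0(r_{\max})$. By the choice $r^2\le 4\varepsilon_0\bar\varepsilon_3/\varepsilon_2$, we have $\varepsilon_0\geq\varepsilon_0' r^2$, so the claim follows. Otherwise $x\in\X_j^{\rm near}(r_{\max})$ with $\mathpzc{d}(x,x_j^\star)>r$. Item 3 combined with the lower bound $\mathfrak{d}_\mathfrak{g}^2\geq\mathpzc{d}^2/\bar\varepsilon_3$ from Lemma \ref{lemma:bound_semidist_fisher_rao} (valid since $r_{\max}\le0.3025/\sqrt d$) gives $\eta^\star(x)\le1-\tfrac{\varepsilon_2}{4\bar\varepsilon_3}r^2$.

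The main obstacle is item 3: making the continuity argument for the normalized Riemannian Hessian quantitative enough that the neighbourhood is expressed as a $\mathpzc{d}$-ball containing its geodesic hull. This is exactly where Lemma 5.2 of \citep{article_giard_decastro_marteau} is needed. The dependence of $r_{\max}$ on $\mu_\omega^\star$ and $\eta^\star$ enters only through these compactness and continuity moduli.
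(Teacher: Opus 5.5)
Your proposal is correct and follows essentially the same route as the paper: continuity of $\mathpzc{d}$ and compactness for item 1, the strict maximum of $\eta^\star$ below $1$ on the closure of the far region for item 2, persistence of the Riemannian Hessian bound on the geodesic hull followed by a second-order Taylor expansion along $\gamma_{x_j^\star\to x}$ for item 3, and the same case split using $\mathfrak{d}_\mathfrak{g}^2\geq \mathpzc{d}^2/\bar\varepsilon_3$ for item 4. You also make explicit two points the paper leaves implicit: the check $\varepsilon_0(r_{\max})\geq \varepsilon_0' r^2$, which is what the second term in $r_{\max}'$ is for, and the requirement that small $\mathpzc{d}$-balls lie inside the neighbourhood where the normalized Hessian stays below $-\varepsilon_2/2$ (on the unbounded domain this follows from the explicit form of $\mathpzc{d}$).
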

\begin{proof}
   We exploit the information about the location of the solution provided by Assumption \ref{assumption:non_degenerate_certif_solution}, the smoothness of $\mathpzc{d}$, the fact that $\mathpzc{d}(x,x')=0 \iff x=x'$, the compactness of $\tilde \X$, and that $x_j^\star \in \mathring{\tilde \X}$ to establish the first item, for $r_{\max}$ small enough.

The items 2 and 3 come from Assumption \ref{assumption:non_degenerate_certif_solution} together with the smoothness of $\eta^\star$ and the compactness of $\X$. With item 3, we also have, for $x\in\X_j^{\rm near}(r_{\max})$, 
\begin{align*}
    \eta^\star(x) &= \eta^\star(x_j^\star) + \dot\gamma_{x_j^\star \to x}(0)^T \nabla\eta^\star(x_j^\star) + \int_0^1 (1-y) \dot\gamma_{x_j^\star \to x}(y)^T  H^\mathfrak{g} \eta^\star(\gamma_{x_j^\star \to x}(y)) \dot\gamma_{x_j^\star \to x}(y) \, \d y \,, \\
    &= 1 + \int_0^1 (1-y) \dot\gamma_{x_j^\star \to x}(y)^T  H^\mathfrak{g} \eta^\star(\gamma_{x_j^\star \to x}(y)) \dot\gamma_{x_j^\star \to x}(y) \, \d y \,, \\
    &\leq 1- \frac{\varepsilon_2}{4}\mathfrak{d}_\mathfrak{g}(x,x_j^\star)^2 \,.
\end{align*}

The last item can be deduced using the previous ones, along with Lemma \ref{lemma:bound_semidist_fisher_rao}, taking $r_{\max} \leq \frac{0.3025}{\sqrt{d}}$. For $x\in \X^{\rm far}(r_{\max})$, $\eta^\star(x)\leq 1-\varepsilon_0(r_{\max})$. For $x\in \X^{\rm far}(r)\setminus \X^{\rm far}(r_{\max})$, there exists $j\in \{1,\ldots p^\star\}$ such that $x\in \X_j^{\rm near}(r_{\max})$. So as $\mathpzc{d}(x,x_j^\star)>r$, \[\eta^\star(x)\leq 1- \frac{\varepsilon_2}{4} \mathfrak{d}_{\mathfrak{g}}(x,x_j^\star)^2 \leq 1 - \frac{\varepsilon_2}{4\bar \varepsilon_3}\mathpzc{d}(x,x_j^\star)^2 \leq 1 - \frac{\varepsilon_2}{ 4\bar\varepsilon_3} r^2\,.\]
\end{proof}

\medskip

In the next subsections, we consider $0<r\leq r_{\max}'$ small enough so that the conclusions of Lemma \ref{lemma:control_certif_near_regions} apply. In particular, \eqref{eq:objective_decrease_G_mu} holds. The radius $r$ will be chosen even smaller later in the proof.

\paragraph{Identification of the error terms}
Throughout the proof, we rely on a basic control of the optimality gap $J_W(\mu)-J_W(\mu_\omega^\star)$ (for $\mu \in \M(\X)^+$), which makes it possible to isolate the relevant error terms. Combining \eqref{eq:dev_J_mu_plus_sigma_minus_J_mu} with Lemma~\ref{lemma:control_certif_near_regions}, and using $J_{\mu_\omega^\star}' = \kappa(1 - \eta^\star)$, we obtain
\begin{align}
    J_W(\mu)-J_W(\mu_\omega^\star) &= \int_{\X} J_{\mu_\omega^\star}'(x) \, \d\mu(x) + \frac{1}{2}\norm{\Psi(\mu-\mu_\omega^\star)}_{\L}^2\,, \notag \\
    &=  \sum_{j=1}^{p^\star} \kappa\int_{\X_j^{\rm near}(r)} (1-\eta^\star) \, \d\mu + \kappa\int_{\X^{\rm far}(r)} (1-\eta^\star) \, \d\mu + \frac{1}{2}\norm{\Psi(\mu-\mu_\omega^\star)}_{\L}^2\,, \notag \\
    &\geq \kappa\frac{\varepsilon_2}{4} \sum_{j=1}^{p^\star}\int_{\X_j^{\rm near}(r)}\mathfrak{d}_\mathfrak{g}(x_j^\star,x)^2 \, \d\mu(x) + \kappa\varepsilon_0'r^2 \mu(\X^{\rm far}(r)) + \frac{1}{2}\norm{\Psi(\mu-\mu_\omega^\star)}_{\L}^2\,. \label{eq:identification_error_terms}
\end{align}

\subsection{Lower bound on the objective decrease}\label{section:lower_bound_objective_decrease}
Let $\mu \in \M^+(\X)$ such that $\norm{\mu}_{\rm TV}\leq B_{\rm TV}$ (\cf Lemma \ref{lemma:control_tv_iterates}). The goal of this section is to provide a lower bound on $G_{\mu}$ (see \eqref{eq:def_G_mu}). There are 3 terms to consider: $\int_{\X^{\rm near}(r)} |J_\mu'(x)|^2 \, \d\mu(x)$, $\int_{\X^{\rm near}(r)} \norm{\nabla^\mathfrak{g} J_\mu'(x)}_x^2 \, \d\mu(x)$ and $\int_{\X^{\rm far}(r)} |J_\mu'(x)|^2\, \d\mu(x)$.

For $x\in \X$, we control $|J_\mu'(x)|$ and $\norm{\nabla^{\mathfrak{g}}J_\mu'(x)}_x$
using the decomposition
 \begin{align}
    J_{\mu}'(x)&=J_{\mu_\omega^\star}'(x)+ \innerprod{\Psi \delta_x}{\Psi(\mu-\mu_\omega^\star)}_\L \,, \notag \\
    &=J_{\mu_\omega^\star}'(x)+ \int K_{\rm norm}(x,x') \, \d (\mu-\mu_\omega^\star)(x') \label{eq:decomposition_Jp} \,.
 \end{align}
We study these terms with Taylor expansions on $J_{\mu_\omega^\star}$ and $K_{\rm norm}$.
Throughout the proof, we will use the identity $J_{\mu_\omega^\star}' = \kappa(1- \eta^\star)$ together with the controls on $\eta^\star$ provided by Assumption \ref{assumption:non_degenerate_certif_solution} and Lemma \ref{lemma:control_certif_near_regions}. 
In particular, 
\begin{equation}\label{eq:prop_Jp_grad_0}
J_{\mu_\omega^\star}'(x_j^\star)=0 \quad \text{and} \quad \nabla J_{\mu_\omega^\star}'(x_j^\star)=0 \quad \forall\, j=1,\ldots,p^\star \,.
\end{equation}

We will also rely on the controls on $J_{\mu}'$ given in Lemma \ref{lemma:control_Jp}.
 
\paragraph{Definitions for the main terms}
For any $i,j\in \{1,\ldots,p^\star\}$, we denote $\tilde b_{x_j}=\int_{\X_j^{\rm near} }\mathfrak{g}_{x_j^\star}^{1/2} \dot \gamma_{x_j^\star \to x}(0) \, \d \mu(x)$, $\tilde b_{\omega_j}=\mu(\X_j^{\rm near}(r)) - \omega_j^\star$, $\tilde b = \begin{pmatrix}
\tilde b_{\omega}\\
 \tilde b_{x}
 \end{pmatrix}$ and $\tilde K(x_j^\star,x_i^\star)= K_{\rm norm}(x_j^\star,x_i^\star)$. We multiply the derivatives of $K_{\rm norm}$ by the metric, defining
\begin{align*}
    \tilde K_1(x_j^\star,x_i^\star)&\coloneq \mathfrak{g}_{x_j^\star}^{-1/2} \nabla_1K_{\rm norm}(x_j^\star,x_i^\star) \,, \\ 
    \tilde K_2(x_j^\star,x_i^\star)&\coloneq  \mathfrak{g}_{x_i^\star}^{-1/2} \nabla_2 K_{\rm norm}(x_j^\star,x_i^\star) \,, \\
   \tilde K_{12}(x_j^\star,x_i^\star)&\coloneq  \mathfrak{g}_{x_j^\star}^{-1/2} \nabla_2 \nabla_1K_{\rm norm}(x_j^\star,x_i^\star) \mathfrak{g}_{x_i^\star}^{-1/2}\,.
\end{align*}
We also write $\tilde H_j =\mathfrak{g}_{x_j^\star}^{-1/2}  H^{\mathfrak{g}}J_{\mu_\omega^\star}'(x_j^\star)\mathfrak{g}_{x_j^\star}^{-1/2} $.

Remark that as $\dot \gamma_{x_j^\star \to x_j^\star}=0$, $\tilde b_{x_j}=\int_{\X_j^{\rm near}(r)} \mathfrak{g}_{x_j^\star}^{1/2}\dot \gamma_{x_j^\star \to x}(0)\, \d(\mu-\mu_\omega^\star)(x)$. 
\subsubsection{Lower bound on \texorpdfstring{$\int_{\X^{\rm near}(r)}|J_{\mu}'|^2 \, \d\mu$}{Jp over near regions}}
We first derive an upper bound on $\int_{\X^{\rm near}(r)} |J_{\mu_\omega^\star}'|^2 \, \d\mu$, then a lower bound on
\[\int_{\X^{\rm near}(r)} \left| \int_\X K_{\rm norm}(x,x') \, \d(\mu - \mu_\omega^\star)(x') \right|^2 \, \d\mu(x)\,,\]
and finally conclude using \eqref{eq:decomposition_Jp}.

\medskip

\noindent
\underline{Control of $J_{\mu_\omega^\star}'(x)$:}
Let $x \in \X_j^{\rm near}(r)$. Using \eqref{eq:prop_Jp_grad_0}, we obtain
\begin{align*}
   E_1(x) \coloneq J_{\mu_\omega^\star}'(x) &= J_{\mu_\omega^\star}'(x_j^\star) + \dot \gamma_{x_j^\star \to x}(0)^T \nabla J_{\mu_\omega^\star}'(x_j^\star) + \int_0^1 (1-y) \dot \gamma_{x_j^\star \to x}(y)^TH^{\mathfrak{g}}J_{\mu_\omega^\star}'(\gamma_{x_j^\star \to x}(y))\dot \gamma_{x_j^\star \to x}(y) \, \d y \,, \\
   &=\int_0^1 (1-y) \dot \gamma_{x_j^\star \to x}(y)^TH^{\mathfrak{g}}J_{\mu_\omega^\star}'(\gamma_{x_j^\star \to x}(y))\dot \gamma_{x_j^\star \to x}(y) \, \d y \,.
\end{align*}
Applying Lemma~\ref{lemma:control_Jp}, we deduce that
\begin{align*}
    \int_{X^{\rm near}(r)}|E_1(x)|^2 \, \d\mu(x)
    &\leq \sum_{j=1}^{p^\star} \int_{\X_j^{\rm near}(r)}\left(\mathfrak{d}_\mathfrak{g}(x_j^\star,x)^2 \frac{\sqrt{B_{22}}}{2} (\norm{\mu_\omega^\star}_{\rm TV}+\norm{y}_\L) \right)^2 \,\d\mu(x) \,, \\
  &\lesssim_{d,\norm{y}_\L,\norm{\mu_\omega^\star}_{\rm TV}} r^2 \sum_{j=1}^{p^\star} \int_{\X_j^{\rm near}(r)}\mathfrak{d}_\mathfrak{g}(x_j^\star,x)^2 \,\d\mu(x) \,,
\end{align*}
where we used that, for $x \in \X_j^{\rm near}(r)$, \[\mathfrak{d}_\mathfrak{g}(x_j^\star,x)^2\leq \frac{1}{\bar \varepsilon_2}\mathpzc{d}(x_j^\star,x)^2 \leq \frac{r^2}{\bar \varepsilon_2}\] by Lemma \ref{lemma:bound_semidist_fisher_rao}.

\noindent
\\\underline{Control of $ \int K_{\rm norm}(x,x')\, \d(\mu-\mu_\omega^\star)(x')$:}
Moreover, for any $x \in \X_j^{\rm near}(r)$,
\begin{align*}
    \int K_{\rm norm}(x,x') \,& \d(\mu-\mu_\omega^\star)(x')
   \\ &= \int  K_{\rm norm}(x_j^\star,x') \, \d(\mu-\mu_\omega^\star)(x') + \int_0^1 \dot \gamma_{x_j^\star \to x}(y)^T \int \nabla_1 K_{\rm norm}( \gamma_{x_j^\star \to x}(y),x') \, \d(\mu-\mu_\omega^\star)(x')\, \d y \,, \\
   &= \sum_{i=1}^{p^\star}\int_{\X_i^{\rm near}(r)}  K_{\rm norm}(x_j^\star,x') \, \d(\mu-\mu_\omega^\star)(x') \\
   &\quad + \int_0^1 \dot \gamma_{x_j^\star \to x}(y)^T \int \nabla_1 K_{\rm norm}( \gamma_{x_j^\star \to x}(y),x') \, \d(\mu-\mu_\omega^\star)(x')\, \d y +\int_{\X^{\rm far}(r)}  K_{\rm norm}(x_j^\star,x') \, \d\mu(x') \,,  \\
   &= \sum_{i=1}^{p^\star} \tilde b_{\omega_i} \tilde K(x_j^\star,x_i^\star) +  \sum_{i=1}^{p^\star}  \tilde K_2(x_j^\star,x_i^\star)^T \tilde b_{x_i^\star}\\
   &\quad +
   \sum_{i=1}^{p^\star} \int_{\X_i^{\rm near}(r)} \int_0^1 (1-y)\dot \gamma_{x_i^\star \to x'}(y)^T  H_2^{\mathfrak{g}} K_{\rm norm}(x_j^\star,\gamma_{x_i^\star \to x'}(y)) \dot \gamma_{x_i^\star \to x'}(y) \, \d y \, \d(\mu-\mu_\omega^\star)(x')\\
  &\quad +\int_0^1 \dot \gamma_{x_j^\star \to x}(y)^T \int \nabla_1 K_{\rm norm}( \gamma_{x_j^\star \to x}(y),x') \, \d(\mu-\mu_\omega^\star)(x')\, \d y +\int_{\X^{\rm far}(r)}  K_{\rm norm}(x_j^\star,x') \, \d\mu(x') \,, \\
   &\eqcolon \sum_{i=1}^{p^\star} \tilde b_{\omega_i} \tilde K(x_j^\star,x_i^\star)  + \sum_{i=1}^{p^\star} \tilde K_2(x_j^\star,x_i^\star)^T \tilde b_{x_i}  + E_2(x)
\end{align*}
where 
\begin{align*}
|E_2(x)|&\leq  \frac{B_{02}}{2} \sum_{i=1}^{p^\star} \int_{\X_i^{\rm near}(r)} \mathfrak{d}_\mathfrak{g}(x_i^\star,x')^2 \, \d\mu(x')  + \mathfrak{d}_\mathfrak{g}(x_j^\star,x) \sqrt{B_{11}} \norm{\Psi(\mu-\mu_\omega^\star)}_\L  + \mu(\X^{\rm far}(r))\,.
\end{align*}
Note that we used $\dot \gamma_{x_i^\star \to x_i^\star}=0$ to get
\begin{align*}
 &  \left|\sum_{i=1}^{p^\star} \int_{\X_i^{\rm near}(r)} \int_0^1 (1-y) \dot \gamma_{x_i^\star \to x'}(y)^T H_2^{\mathfrak{g}} K_{\rm norm}(x, \gamma_{x_i^\star \to x'}(y))\dot \gamma_{x_i^\star \to x'}(y) \, \d y \, \d(\mu-\mu_\omega^\star)(x') \right|\\ &=  \left|\sum_{i=1}^{p^\star} \int_{\X_i^{\rm near}(r)} \int_0^1 (1-y) \dot \gamma_{x_i^\star \to x'}(y)^T H_2^{\mathfrak{g}} K_{\rm norm}(x, \gamma_{x_i^\star \to x'}(y))\dot \gamma_{x_i^\star \to x'}(y) \, \d y \, \d\mu(x') \right| \\
 &\leq \frac{B_{02}}{2}\sum_{i=1}^{p^\star} \int_{\X_i^{\rm near}(r)}\mathfrak{d}_\mathfrak{g}(x_i^\star,x')^2 \, \d\mu(x')\,.
\end{align*}
So according to \eqref{eq:identification_error_terms},
\begin{align*}
\int_{X^{\rm near}(r)}|E_2(x)|^2 \, \d\mu(x)&\lesssim_{B_{\rm TV}, d} \left(\sum_i \int_{\X_i^{\rm near}(r)} \mathfrak{d}_\mathfrak{g}(x_i^\star,x')^2 \, \d\mu(x') \right)^2  + \sum_j \int_{\X_i^{\rm near}(r)} \mathfrak{d}_\mathfrak{g}(x_j^\star,x)^2 \,\d\mu(x) \norm{\Psi(\mu-\mu_\omega^\star)}_\L^2\\
&\quad + \mu(\X^{\rm far}(r))^2\,, \\
& \lesssim_{B_{\rm TV}, d, \varepsilon_0', \varepsilon_2} \frac{1}{\kappa^2 r^4}(J_W(\mu) - J_W(\mu_\omega^\star))^2 \,.
\end{align*}
\underline{Conclusion for $\int_{\X^{\rm near}(r)}|J_\mu'(x)|^2\, \d \mu(x)$:}
Using that $(a+b)^2\geq \frac{1}{2}a^2-b^2$ and collecting the error terms $\int_{X^{\rm near}(r)}|E_1(x)|^2 \, \d\mu(x)$ and $\int_{X^{\rm near}(r)}|E_2(x)|^2 \, \d\mu(x)$ into $b$, we obtain
\begin{equation} \label{eq:lower_bound_Jp_near_regions}
\begin{split}
    \frac{1}{2} \sum_{j=1}^{p^\star} &\mu(\X_j^{\rm near}(r)) \left(\sum_{i=1}^{p^\star}\tilde b_{\omega_i} \tilde K(x_j^\star,x_i^\star)  + \sum_{i=1}^{p^\star} \tilde K_2(x_j^\star,x_i^\star)^T \tilde b_{x_i} \right)^2- \int_{\X^{\rm near}(r)}|J_\mu'(x)|^2 \, \d \mu(x)
   \\
   &\lesssim_{d,B_{\rm TV},\norm{y}_\L,\norm{\mu_\omega^\star}_{\rm TV}, \varepsilon_2, \varepsilon_0'}  \frac{1}{\kappa^2 r^4}(J_W(\mu)-J_W(\mu_\omega^\star))^2  +  r^2 \sum_j \int_{\X_j^{\rm near}(r)}\mathfrak{d}_\mathfrak{g}(x_j^\star,x)^2 \,\d\mu(x) \,.
   \end{split}
\end{equation}

\subsubsection{Lower bound on \texorpdfstring{$\int_{\X^{\rm far}(r)} |J_{\mu}'|^2 \, \d\mu$}{Jp over the far region}}
According to Lemma \ref{lemma:control_certif_near_regions} and recalling that $J_{\mu_\omega^\star}=\kappa(1-\eta^\star)$, we have $|J_{\mu_\omega^\star}'(x)|\geq \kappa \varepsilon_0'r^2$ for $x\in \X^{\rm far}(r)$. Hence
\[|J_\mu'(x)|=|J_{\mu_\omega^\star}'(x)+\innerprod{\Psi \delta_x}{\Psi(\mu-\mu_\omega^\star)}_\L|\geq \kappa \varepsilon_0'r^2- \norm{\Psi(\mu-\mu_\omega^\star)}_\L\,.\] 
Using again the inequality $(a+b)^2\geq \frac{1}{2}a^2-b^2$ with \eqref{eq:identification_error_terms},
\begin{align*}
    \int_{\X^{\rm far}(r)}|J_\mu'(x)|^2 \, \d \mu(x)&\geq \left(\frac{1}{2}\kappa^2 \varepsilon_0'^2 r^4 - \norm{\Psi(\mu-\mu_\omega^\star)}_\L^2\right) \mu(\X^{\rm far}(r)) \,, \\
    & \geq \frac{1}{2}\kappa^2 \varepsilon_0'^2 r^4 \mu(\X^{\rm far}(r)) - \frac{1}{\kappa^2 \varepsilon_0'^2 r^2}(J_W(\mu)-J_W(\mu_\omega^\star))^2 \,,
\end{align*}
leading to 
\begin{equation}\label{eq:lower_bound_Jp_far_region}
  \frac{1}{2}\kappa^2 \varepsilon_0'^2 r^4 \mu(\X^{\rm far}(r))- \int_{\X^{\rm far}(r)}|J_\mu'(x)|^2 \, \d \mu(x) \lesssim_{\varepsilon_0'} \frac{1}{\kappa^2 r^2}(J_W(\mu)-J_W(\mu_\omega^\star))^2 \,.  
\end{equation}

\subsubsection{Lower bound on \texorpdfstring{$\int_{\X^{\rm near}(r)}\norm{\nabla^{\mathfrak{g}}J_\mu'(x)}_x^2 \, \d \mu(x)$}{the gradient of Jp over near regions}}
\underline{Dealing with Riemannian gradients:}
For $x\in \R^d \times [u_{\min},+\infty)^d$ such that $\mathpzc{d}(x_j^\star,x)\leq r$, Lemma \ref{lemma:local_variation_metric} gives \[\norm{v\mathfrak{g}_{x_j^\star}^{-1/2}}_{2}=\norm{v\mathfrak{g}_{x}^{-1/2}\mathfrak{g}_{x}^{1/2}\mathfrak{g}_{x_j^\star}^{-1/2}}_{2} \leq \norm{ v \mathfrak{g}_x^{-1}}_{x} \norm{\mathfrak{g}_{x}^{1/2}\mathfrak{g}_{x_j^\star}^{-1/2}}_2\leq C_\mathfrak{g}\norm{ v \mathfrak{g}_x^{-1}}_{x} \quad \forall \, v\in \R^d\,.\]  
So for $x\in \X_j^{\rm near}(r)$ ($j=1,\ldots,p^\star$), 
\begin{equation}\label{eq:control_grad_jp_metric_xjstar}
\norm{\nabla^{\mathfrak{g}} J_\mu'(x)}_x=  \norm{\mathfrak{g}_x^{-1} \nabla J_\mu'(x)}_x\geq  \frac{1}{C_\mathfrak{g}}\norm{\mathfrak{g}_{x_j^\star}^{-1/2} \nabla J_\mu'(x)}_2\,.\end{equation}
We use this inequality together with $\mathpzc{d}(x_j^\star,\gamma_{x_j^\star \to x}(y))\leq \mathpzc{d}(x_j^\star,x)\leq r$ for $y \in [0,1]$ (see Appendix \ref{section:geodesics_and_distances}).

\noindent
\underline{Decomposition of $\mathfrak{g}_{x_j^\star}^{-1/2} \nabla J_\mu'(x)$:} Let $x\in \X_j^{\rm near}(r)$. 
As $\nabla^2 J_{\mu_\omega^\star}'(x_j^\star)=H^{\mathfrak{g}} J_{\mu_\omega^\star}'(x_j^\star)$ (because $\nabla J_{\mu_\omega^\star}'(x_j^\star)=0$), using \eqref{eq:decomposition_Jp} we get the decomposition
\begin{align}
  \mathfrak{g}_{x_j^\star}^{-1/2} \nabla J_\mu'(x)&=  \mathfrak{g}_{x_j^\star}^{-1/2}  \nabla^2 J_{\mu_\omega^\star}'(x_j^\star) \dot \gamma_{x_j^\star \to x}(0) \notag \\
    &\quad + \underbrace{ \mathfrak{g}_{x_j^\star}^{-1/2} \int \nabla_1 K_{\rm norm} (x_j^\star,x') \, \d(\mu-\mu_\omega^\star)(x')}_{A(x)} \notag \\
    &\quad + \underbrace{ \mathfrak{g}_{x_j^\star}^{-1/2}\int \int_0^1 \nabla_1^2  K_{\rm norm} (\gamma_{x_j^\star \to x}(y),x') \dot \gamma_{x_j^\star \to x}(y) \, \d(\mu-\mu_\omega^\star)(x')}_{E_1(x)}  + E_2(x)\,, \notag \\
    &= \mathfrak{g}_{x_j^\star}^{-1/2} H^{\mathfrak{g}} J_{\mu_\omega^\star}'(x_j^\star)\dot \gamma_{x_j^\star \to x}(0) +A(x)+ E_1(x)+E_2(x) \label{eq:decomposition_grad_jp}
\end{align}
where
\[E_2(x)\coloneq 
\begin{pmatrix}
\mathfrak{g}_{t_{j,1}^\star t_{j,1}^\star}^{-1/2}
\int_0^1 (1-y)
\dot \gamma_{x_j^\star \to x}(y)^T H^{\mathfrak{g}} \partial_{t_1} J_{\mu_\omega^\star}'(\gamma_{x_j^\star \to x}(y)) \dot \gamma_{x_j^\star \to x}(y)
\, \d y
\\
\vdots
\\
\mathfrak{g}_{t_{j,d}^\star t_{j,d}^\star}^{-1/2}
\int_0^1 (1-y)
\dot \gamma_{x_j^\star \to x}(y)^T H^{\mathfrak{g}} \partial_{t_d} J_{\mu_\omega^\star}'(\gamma_{x_j^\star \to x}(y)) \dot \gamma_{x_j^\star \to x}(y) 
\\
\mathfrak{g}_{u_{j,1}^\star u_{j,1}^\star}^{-1/2}
\int_0^1 (1-y)
\dot \gamma_{x_j^\star \to x}(y)^T H^{\mathfrak{g}} \partial_{u_1} J_{\mu_\omega^\star}'(\gamma_{x_j^\star \to x}(y)) \dot \gamma_{x_j^\star \to x}(y)
\, \d y
\\
\vdots
\\
\mathfrak{g}_{u_{j,d}^\star u_{j,d}^\star}^{-1/2}
\int_0^1 (1-y)
\dot \gamma_{x_j^\star \to x}(y)^T H^{\mathfrak{g}} \partial_{u_d} J_{\mu_\omega^\star}'(\gamma_{x_j^\star \to x}(y)) \dot \gamma_{x_j^\star \to x}(y)
\end{pmatrix}\,.
\]
$E_1$ and $E_2$ are negligible. In fact, 
\begin{align*}
   \norm{E_1(x)}_2&\leq \sup_y \norm{ \mathfrak{g}_{x_j^\star}^{-1/2} \mathfrak{g}_{\gamma_{x_j^\star \to x}(y)}^{1/2}}_2   \norm{\mathfrak{g}_{\gamma_{x_j^\star \to x}(y)}^{-1/2} \innerprod{\nabla^2 \Psi \delta_{\gamma_{x_j^\star \to x}(y)}}{\Psi(\mu-\mu_\omega^\star)} \mathfrak{g}_{\gamma_{x_j^\star \to x}(y)}^{-1/2}}_2  \norm{\dot \gamma_{x_j^\star \to x}(y)^T \mathfrak{g}_{\gamma_{x_j^\star \to x}(y)}^{1/2}}_2 \,, \\ 
   &\leq C_\mathfrak{g} \mathfrak{d}_{\mathfrak{g}}(x,x_j^\star) B_{02}^e \norm{\Psi(\mu-\mu_\omega^\star)}_\L \,.
\end{align*}
We also have 
\begin{align*}
    \norm{E_2(x)}_2 &\leq \frac{1}{2}\mathfrak{d}_{\mathfrak{g}}(x,x_j^\star)^2 \sup_y \norm{\mathfrak{g}_{\gamma_{x_j^\star \to x}(y)}^{1/2}\mathfrak{g}_{x_j^\star}^{-1/2}}_2 \sqrt{2d} \\
    &\qquad \times \sup_{x\in \R^d \times [u_{\min},+\infty)^d\,, \: b_k\in \{t_k,u_k\}}\norm{\mathfrak{g}_{b_k b_k}^{-1/2}\mathfrak{g}_{x}^{-1/2}
H^{\mathfrak{g}} \partial_{b_k} J_{\mu_\omega^\star}'(x) \mathfrak{g}_{x}^{-1/2}}_2\,, \\
    &\leq \frac{1}{2} \sqrt{B_{33}^e} C_\mathfrak{g} (\norm{\mu_\omega^\star}_{\rm TV} +\norm{y}_\L)\mathfrak{d}_{\mathfrak{g}}(x,x_j^\star)^2  \,, \\
    &\leq \frac{1}{2} \sqrt{B_{33}^e} C_\mathfrak{g}(\norm{\mu_\omega^\star}_{\rm TV} +\norm{y}_\L) \tfrac{1}{\sqrt{\bar \varepsilon_2}} r \, \mathfrak{d}_{\mathfrak{g}}(x,x_j^\star) \,.
\end{align*}

Moreover,
\begin{align*}
    A(x)&=\int \mathfrak{g}_{x_j^\star}^{-1/2}\nabla_1 K_{\rm norm}(x_j^\star,x') \, \d(\mu-\mu_\omega^\star)(x') \,, \\
    &=\sum_i \tilde b_{\omega_i}  \tilde K_1(x_j^\star,x_i^\star)  + \sum_i \tilde K_{12}(x_j^\star,x_i^\star) \tilde b_{x_i}\\
    & \quad + \underbrace{\sum_i \int_{X_i^{\rm near}(r)} \int_0^1 (1-y)  \mathfrak{g}_{x_j^\star}^{-1/2}H^{\mathfrak{g}}_2\nabla_1 K_{\rm norm}(x_j^\star, \gamma_{x_i^\star \to x'}(y)) [\dot \gamma_{x_i^\star \to x'}(y),\dot \gamma_{x_i^\star \to x'}(y)]  \, \d y  \, \d(\mu-\mu_\omega^\star)(x')}_{E_3(x)} \\
    & \quad + \underbrace{\int_{\X^{\rm far}(r)} \mathfrak{g}_{x_j^\star}^{-1/2} \nabla_1 K_{\rm norm}(x_j^\star,x') \, \d\mu(x') }_{E_4(x)}\,.
\end{align*}
$E_3$ and $E_4$ are negligible:
$\norm{E_4(x)}_2 \leq \mu(\X^{\rm far}(r))  B_{10}$ and
using again that $\int_{X_i^{\rm near}(r)}\mathfrak{d}_\mathfrak{g}(x_i^\star,x') \, \d\mu_\omega^\star(x')=0$, we get 
\[\norm{E_3(x)}_2 \leq \frac{B_{12}}{2} \sum_i \int_{X_i^{\rm near}(r)} \mathfrak{d}_\mathfrak{g}(x_i^\star , x')^2 \, \d\mu(x')\,.\]
Gathering the error terms, we have
\begin{align}
    \int_{\X^{\rm near}(r)} &\norm{E_1(x)+E_2(x)+E_3(x)+E_4(x)}_2^2 \,\d\mu(x)  \notag \\&\lesssim_{d,B_{\rm TV},\norm{y}_\L,\norm{\mu_\omega^\star}_{\rm TV}} \left(\sum_i \int_{X_i^{\rm near}(r)} \mathfrak{d}_\mathfrak{g}(x_i^\star , x')^2 \, \d\mu(x')  \right)\norm{\Psi(\mu-\mu_\omega^\star)}^2+ r^2 \sum_j \int_{X_j^{\rm near}(r)} \mathfrak{d}_\mathfrak{g}(x_j^\star , x')^2 \, \d\mu(x') \notag \\
    &\quad + \left(\sum_i \int_{X_i^{\rm near}(r)} \mathfrak{d}_\mathfrak{g}(x_i^\star , x')^2 \, \d\mu(x')  \right)^2 + \mu(\X^{\rm far}(r))^2 \,, \notag\\
    &\lesssim_{d,B_{\rm TV},\norm{y}_\L,\norm{\mu_\omega^\star}_{\rm TV}, \varepsilon_2, \varepsilon_0'} \frac{1}{\kappa^2 r^4}(J_W(\mu)-J_W(\mu_\omega^\star))^2 +r^2 \sum_j \int_{\X_j^{\rm near}(r)}\mathfrak{d}_\mathfrak{g}(x_j^\star,x)^2 \,\d\mu(x) \,, \label{eq:grad_jp_control_error_terms}
\end{align}
where we used \eqref{eq:identification_error_terms} to obtain the last inequality.

\noindent
\underline{Assembling the bounds:}
From \eqref{eq:control_grad_jp_metric_xjstar}, \eqref{eq:decomposition_grad_jp} and \eqref{eq:grad_jp_control_error_terms}, it follows that
\begin{align*}
    \frac{1}{2C_\mathfrak{g}^2} \sum_j \int_{\X_j^{\rm near}(r)}&\norm{\tilde H_j \mathfrak{g}_{x_j^\star}^{1/2} \dot \gamma_{x_j^\star \to x}(0) + \sum_i \tilde b_{\omega_i}  \tilde K_1(x_j^\star,x_i^\star)  + \sum_i \tilde K_{12}(x_j^\star,x_i^\star) \tilde b_{x_i}}_2^2 \, \d\mu(x) \\
    -\int_{\X^{\rm near}(r)} &\norm{\nabla^\mathfrak{g}J_\mu'(x)}_x^2 \, \d\mu(x)\\ & \lesssim_{d,B_{\rm TV},\norm{y}_\L,\norm{\mu_\omega^\star}_{\rm TV}, \varepsilon_2, \varepsilon_0'} \frac{1}{\kappa^2 r^4} (J_W(\mu)-J_W(\mu_\omega^\star))^2 + r^2 \sum_j \int_{\X_j^{\rm near}(r)}\mathfrak{d}_\mathfrak{g}(x_j^\star,x)^2 \,\d\mu(x) \,.
\end{align*}
 Finally, remark that for some $v$ not depending on $x$, a bias–variance decomposition gives, for $j$ such that $\mu(\X_j^{\rm near}(r))>0$, \begin{align*}
    \int_{\X_j^{\rm near}(r)}&\norm{\tilde H_j \mathfrak{g}_{x_j^\star}^{1/2}\dot \gamma_{x_j^\star \to x}(0) + v }_2^2 \, \d\mu(x) \\
    &= \int_{\X_j^{\rm near}(r)}\norm{\tilde H_j \frac{\tilde b_{x_j}}{\mu(\X_j^{\rm near}(r))}+ v }_2^2 \, \d\mu(x) + \int_{\X_j^{\rm near}(r)}\norm{\tilde H_j \left(\mathfrak{g}_{x_j^\star}^{1/2}\dot \gamma_{x_j^\star \to x}(0)-\frac{\tilde b_{x_j}}{\mu(\X_j^{\rm near}(r))}\right)}_2^2 \, \d\mu(x)\,,\\  
    &= \mu(\X_j^{\rm near}(r))\norm{\tilde H_j \frac{\tilde b_{x_j}}{\mu(\X_j^{\rm near}(r))}+ v }_2^2+ \int_{\X_j^{\rm near}(r)}\norm{\tilde H_j \left(\mathfrak{g}_{x_j^\star}^{1/2}\dot \gamma_{x_j^\star \to x}(0)-\frac{\tilde b_{x_j}}{\mu(\X_j^{\rm near}(r))}\right)}_2^2 \, \d\mu(x) \,.
\end{align*}
As $\tilde H_j$ is a symmetric matrix of positive definite type, with eigenvalues greater than $\kappa \varepsilon_2$ (by Assumption \ref{assumption:non_degenerate_certif_solution}), it comes that 
\begin{align*}
    \int_{\X_j^{\rm near}(r)}&\norm{\tilde H_j \mathfrak{g}_{x_j^\star}^{1/2}\dot \gamma_{x_j^\star \to x}(0) + v }_2^2 \, \d\mu(x) \\&\geq \mu(\X_j^{\rm near}(r))\norm{\tilde H_j \frac{\tilde b_{x_j}}{\mu(\X_j^{\rm near}(r))}+ v }_2^2 + \kappa^2 \varepsilon_2^2\int_{\X_j^{\rm near}(r)}  \norm{\mathfrak{g}_{x_j^\star}^{1/2}\dot \gamma_{x_j^\star \to x}(0)-\frac{\tilde b_{x_j}}{\mu(\X_j^{\rm near}(r))}}_2^2 \, \d\mu(x) \,.
\end{align*}
So applying this inequality with $v=\sum_i \tilde b_{\omega_i}  \tilde K_1(x_j^\star,x_i^\star)
+ \sum_i \tilde K_{12}(x_j^\star,x_i^\star) \tilde b_{x_i}$, we get
\begin{equation}\label{eq:lower_bound_grad_Jp_near_regions}
\begin{split}
\frac{1}{2C_\mathfrak{g}^2} &\sum_{j, \mu(\X_j^{\rm near}(r))>0} \mu(\X_j^{\rm near}(r)) \norm{\tilde H_j\frac{\tilde b_{x_j}}{\mu(\X_j^{\rm near}(r))}  + \sum_i \tilde b_{\omega_i}  \tilde K_1(x_j^\star,x_i^\star)
+ \sum_i \tilde K_{12}(x_j^\star,x_i^\star) \tilde b_{x_i}}_2^2 
\\
&+\frac{\kappa^2 \varepsilon_2^2}{2C_\mathfrak{g}^2} \sum_{j, \mu(\X_j^{\rm near}(r))>0} \int_{\X_j^{\rm near}(r)}\norm{\mathfrak{g}_{x_j^\star}^{-1/2} \dot \gamma_{x_j^\star \to x}(0)- \frac{\tilde b_{x_j}}{\mu(\X_j^{\rm near}(r))}}_2^2 \, \d\mu(x)
    -\int_{\X^{\rm near}(r)} \norm{\nabla^\mathfrak{g}J_\mu'(x)}_x^2 \, \d\mu(x)
    \\ & \quad\lesssim_{d,B_{\rm TV},\norm{y}_\L,\norm{\mu_\omega^\star}_{\rm TV}, \varepsilon_2, \varepsilon_0'} \frac{1}{\kappa^2 r^4} (J_W(\mu)-J_W(\mu_\omega^\star))^2 +  r^2 \sum_j \int_{\X_j^{\rm near}(r)}\mathfrak{d}_\mathfrak{g}(x_j^\star,x)^2 \,\d\mu(x) \,.
\end{split}
\end{equation}

\subsubsection{Conclusion for the lower bound, choice of \texorpdfstring{$r$}{r}} 
Remark that
\begin{equation}
   \label{eq:control_int_fisher_rao_norm_b}
\int_{\X_j^{\rm near}(r)} \mathfrak{d}_\mathfrak{g}(x_j^\star,x)^2 \, \d\mu(x) = \mathds{1}_{\mu(\X_j^{\rm near}(r))>0} \left(\int_{\X_j^{\rm near}(r)}  \norm{\mathfrak{g}_{x_j^\star}^{1/2}\dot \gamma_{x_j^\star \to x}(0)-\frac{\tilde b_{x_j}}{\mu(\X_j^{\rm near}(r))}}_2^2 \, \d\mu(x) +\frac{\norm{\tilde b_{x_j}}_2^2}{\mu(\X_j^{\rm near}(r))} \right)\,. 
\end{equation}
We now have all the ingredients to control the objective decrease $G_{\mu}$ \eqref{eq:def_G_mu}. 
Assembling the bounds \eqref{eq:lower_bound_Jp_near_regions}, \eqref{eq:lower_bound_Jp_far_region}, \eqref{eq:lower_bound_grad_Jp_near_regions} and using \eqref{eq:control_int_fisher_rao_norm_b}, we get
\begin{align*}
G_{\mu} &\gtrsim_{\varepsilon_0', \varepsilon_2} \kappa^2 r^4 \mu(\X^{\rm far}(r)) + \sum_{j=1}^{p^\star} \mu(\X_j^{\rm near}(r)) \left(\sum_{i=1}^{p^\star} \tilde b_{\omega_i} \tilde K(x_j^\star,x_i^\star)  + \sum_{i=1}^{p^\star} \tilde K_2(x_j^\star,x_i^\star)^T \tilde b_{x_i} \right)^2  \\
&\quad + \sum_{j,\mu(\X_j^{\rm near}(r))>0} \mu(\X_j^{\rm near}(r)) \norm{\tilde H_j\frac{\tilde b_{x_j}}{\mu(\X_j^{\rm near}(r))}  + \sum_i \tilde b_{\omega_i}  \tilde K_1(x_j^\star,x_i^\star)  + \sum_i \tilde K_{12}(x_j^\star,x_i^\star) \tilde b_{x_i}}_2^2 
    \\
   &\quad  +\kappa^2 \sum_{j,\mu(\X_j^{\rm near}(r))>0} \int_{\X_j^{\rm near}(r)}\norm{\mathfrak{g}_{x_j^\star}^{-1/2} \dot \gamma_{x_j^\star \to x}(0)- \frac{\tilde b_{x_j}}{\mu(\X_j^{\rm near}(r))}}_2^2 \, \d\mu(x) \\
& \quad -C_{lower} \frac{1}{\kappa^2 r^4}(J_W(\mu)-J_W(\mu_\omega^\star))^2\\
&\quad -C_{lower} r^2 \sum_{j,\mu(\X_j^{\rm near}(r))>0}\left( \int_{\X_j^{\rm near}(r)}  \norm{\mathfrak{g}_{x_j^\star}^{1/2}\dot \gamma_{x_j^\star \to x}(0)-\frac{\tilde b_{x_j}}{\mu(\X_j^{\rm near}(r))}}_2^2 \, \d\mu(x) +\frac{\norm{\tilde b_{x_j}}_2^2}{\mu(\X_j^{\rm near}(r))}\right)\,,\\
&\gtrsim_{\varepsilon_0', \varepsilon_2} \kappa^2r^4\mu(\X^{\rm far}(r))+ \min_j\mu(\X_j^{\rm near}(r))
    \norm{\left(
\tilde \Upsilon + 
\begin{pmatrix}
0 & 0 \\
 0& \diag(\mathds{1}_{\mu(\X_j^{\rm near}(r))>0}\frac{1}{\mu(\X_j^{\rm near}(r))}\tilde H_j)_{j=1,\ldots,p^\star} 
\end{pmatrix}\right)\begin{pmatrix}
\tilde b_{\omega}\\
 \tilde b_{x}
 \end{pmatrix}}_2^2 \\
 & \quad +\kappa^2 \sum_{j, \mu(\X_j^{\rm near}(r))>0} \int_{\X_j^{\rm near}(r)}\norm{\mathfrak{g}_{x_j^\star}^{-1/2} \dot \gamma_{x_j^\star \to x}(0)- \frac{\tilde b_{x_j}}{\mu(\X_j^{\rm near}(r))}}_2^2 \, \d\mu(x) \\
& \quad -C_{lower} \frac{1}{\kappa^2 r^4}(J_W(\mu)-J_W(\mu_\omega^\star))^2\\
&\quad -C_{lower} r^2\sum_{j,\mu(\X_j^{\rm near}(r))>0} \left( \int_{\X_j^{\rm near}(r)}  \norm{\mathfrak{g}_{x_j^\star}^{1/2}\dot \gamma_{x_j^\star \to x}(0)-\frac{\tilde b_{x_j}}{\mu(\X_j^{\rm near}(r))}}_2^2 \, \d\mu(x) + \frac{\norm{\tilde b_{x_j}}_2^2}{\mu(\X_j^{\rm near}(r))}\right)\,,
\end{align*}
with $C_{lower}>0$ only depending on $d,B_{\rm TV},\norm{y}_\L,\norm{\mu_\omega^\star}_{\rm TV}, \varepsilon_2, \varepsilon_0'$, and where the matrix $\tilde \Upsilon$ has been introduced in \eqref{eq:tilde_Upsilon}.

Now, remark that $\tilde \Upsilon$ and $
\begin{pmatrix}
0 & 0 \\
 0& \diag(\mathds{1}_{\mu(\X_j^{\rm near}(r))>0}\frac{1}{\mu(\X_j^{\rm near}(r))}\tilde H_j)_{j=1,\ldots,p} 
\end{pmatrix}$ are symmetric positive matrices. Moreover, $\tilde \Upsilon \succeq C_{\tilde \Upsilon} I$. Remark also that for $j=1,\ldots,p^\star$, as $\omega_j^\star \geq \omega_\star$ (\cf Assumption \ref{assumption:non_degenerate_certif_solution}), $\min_j \mu(X_j^{\rm near}(r)) \geq \omega_\star - \norm{\tilde b_{\omega}}_{\infty}\geq \omega_\star-  \norm{\tilde b_{\omega}}_2$.
So
\begin{align*}
G_\mu &\gtrsim_{\varepsilon_0', \varepsilon_2, C_{\tilde \Upsilon}, \omega_\star} \kappa^2r^4\mu(\X^{\rm far}(r))+ 
    \tilde b^T \left(
\tilde \Upsilon + 
\begin{pmatrix}
0 & 0 \\
 0& \diag(\mathds{1}_{\mu(\X_j^{\rm near}(r))>0}\frac{1}{\mu(\X_j^{\rm near}(r))}\tilde H_j)_{j=1,\ldots,p^\star} 
\end{pmatrix}\right)\tilde b \\
 & \quad +\kappa^2 \sum_{j, \mu(\X_j^{\rm near}(r))>0} \int_{\X_j^{\rm near}(r)}\norm{\mathfrak{g}_{x_j^\star}^{-1/2} \dot \gamma_{x_j^\star \to x}(0)- \frac{\tilde b_{x_j}}{\mu(\X_j^{\rm near}(r))}}_2^2 \, \d\mu(x) \\
 & \quad -C_{lower}' \norm{\tilde b_{\omega}}_2 \tilde b^T \left(
\tilde \Upsilon + 
\begin{pmatrix}
0 & 0 \\
 0& \diag(\mathds{1}_{\mu(\X_j^{\rm near}(r))>0}\frac{1}{\mu(\X_j^{\rm near}(r))}\tilde H_j)_{j=1,\ldots,p^\star} 
\end{pmatrix}\right) \tilde b \\
& \quad -C_{lower}' \frac{1}{\kappa^2 r^4}(J_W(\mu)-J_W(\mu_\omega^\star))^2\\
&\quad -C_{lower}' r^2\sum_{j,\mu(\X_j^{\rm near}(r))>0} \left( \int_{\X_j^{\rm near}(r)}  \norm{\mathfrak{g}_{x_j^\star}^{1/2}\dot \gamma_{x_j^\star \to x}(0)-\frac{\tilde b_{x_j}}{\mu(\X_j^{\rm near}(r))}}_2^2 \, \d\mu(x) + \frac{\norm{\tilde b_{x_j}}_2^2}{\mu(\X_j^{\rm near}(r))}\right)\,.
\end{align*}
with $C_{lower}'>0$ only depending on $d,B_{\rm TV},\norm{y}_\L,\norm{\mu_\omega^\star}_{\rm TV}, \varepsilon_2, \varepsilon_0', C_{\tilde \Upsilon}, \omega_\star$. 

\paragraph{Choice of $r$}
According to Lemma \ref{lemma:control_certif_near_regions}, $\tilde H_j$ has eigenvalues greater than $\kappa\varepsilon_2$, leading to the inequality
\begin{equation} \label{eq:control_norm_b_bHb}
    \tilde b_x^T  \diag(\mathds{1}_{\mu(\X_j^{\rm near}(r))>0}\frac{1}{\mu(\X_j^{\rm near}(r))}\tilde H_j)_{j=1,\ldots,p^\star}  \tilde b_x \geq \kappa \varepsilon_2 \sum_{j, \mu(\X_j^{\rm near}(r))>0} \frac{\norm{\tilde b_{x_j}}_2^2}{\mu(\X_j^{\rm near}(r))}\,.
\end{equation}
Setting $C_r=\sqrt{\frac{1}{2C_{lower}'}\min\{\varepsilon_2\,,\,1\}}$ and $r = \min\{C_r \kappa,r_{\max}'\}$, we obtain
\begin{equation}\label{eq:lower_bound_G_mu}
\begin{aligned}
G_\mu &\gtrsim_{\varepsilon_0', \varepsilon_2, C_{\tilde \Upsilon}, \omega_\star} \kappa^2r^4\mu(\X^{\rm far}(r))+ 
    \tilde b^T \left(
\tilde \Upsilon + 
\begin{pmatrix}
0 & 0 \\
 0& \diag(\mathds{1}_{\mu(\X_j^{\rm near}(r))>0}\frac{1}{\mu(\X_j^{\rm near}(r))}\tilde H_j)_{j=1,\ldots,p^\star} 
\end{pmatrix}\right)\tilde b \\
 & \quad +\kappa^2 \sum_{j, \mu(\X_j^{\rm near}(r))>0} \int_{\X_j^{\rm near}(r)}\norm{\mathfrak{g}_{x_j^\star}^{-1/2} \dot \gamma_{x_j^\star \to x}(0)- \frac{\tilde b_{x_j}}{\mu(\X_j^{\rm near}(r))}}_2^2 \, \d\mu(x) \\
 & \quad -C_{lower}'' \norm{\tilde b_{\omega}}_2 \tilde b^T \left(
\tilde \Upsilon + 
\begin{pmatrix}
0 & 0 \\
 0& \diag(\mathds{1}_{\mu(\X_j^{\rm near}(r))>0}\frac{1}{\mu(\X_j^{\rm near}(r))}\tilde H_j)_{j=1,\ldots,p^\star} 
\end{pmatrix}\right) \tilde b \\
& \quad -C_{lower}'' \frac{1}{\kappa^2 r^4}(J_W(\mu)-J_W(\mu_\omega^\star))^2\,.
\end{aligned}
\end{equation}
with $C_{lower}''>0$ only depending on $d,B_{\rm TV},\norm{y}_\L,\norm{\mu_\omega^\star}_{\rm TV}, \varepsilon_2, \varepsilon_0', C_{\tilde \Upsilon}, \omega_\star$. 
Moreover, the constant $C_r$ depends only on $d,B_{\rm TV},\norm{y}_\L,\norm{\mu_\omega^\star}_{\rm TV}, \varepsilon_2, \varepsilon_0', C_{\tilde \Upsilon}, \omega_\star$.
\subsection{Upper bound on the optimality gap}\label{section:upper_bound_optimality_gap}
Let $\mu \in \M^+(\X)$ such that $\norm{\mu}_{\rm TV}\leq B_{\rm TV}$. We aim to upper bound $J_W(\mu)-J_W(\mu_\omega^\star)$.
Recall that (see \eqref{eq:dev_J_mu_plus_sigma_minus_J_mu})
 \begin{equation}\label{eq:decomposition_optimality_gap}
     J_W(\mu)-J_W(\mu_\omega^\star)= \underbrace{\int_\X J_{\mu_\omega^\star}' \, \d\mu}_{A} + \underbrace{\frac{1}{2}\norm{\Psi(\mu - \mu_\omega^\star)}_\L^2}_{B}\,.
 \end{equation} 
\underline{Control of $A$:}
Remark that $J_{\mu_\omega^\star}\leq \kappa + \norm{\mu_\omega^\star}_{\rm TV} + \norm{y}_\L$ (Lemma \ref{lemma:control_Jp}).
As $J_{\mu_\omega^\star}'(x_j^\star)=0$ and $\nabla J_{\mu_\omega^\star}'(x_j^\star)=0$ (see \eqref{eq:prop_Jp_grad_0}), using a Taylor expansion we have
\begin{align}
  A &= \sum_{j=1}^{p^\star} \int_{\X_j^{\rm near}(r)} J_{\mu_\omega^\star}'(x) \, \d\mu(x) + \int_{\X^{\rm far}(r)} J_{\mu_\omega^\star}'(x) \, \d\mu(x) \,,\\
  &= \sum_{j=1}^{p^\star} \int_{\X_j^{\rm near}(r)} \int_0^1 (1-y) \dot \gamma_{x_j^\star \to x}(y)^T \innerprod{H^\mathfrak{g} \Psi \delta_{\gamma_{x_j^\star \to x}(y)}}{\Psi \mu_\omega^\star-y}_\L \dot \gamma_{x_j^\star \to x}(y) \, \d y \: \d\mu(x) + \int_{\X^{\rm far}(r)} J_{\mu_\omega^\star}'(x) \, \d\mu(x) \,, \notag \\
  &\leq \frac{1}{2} \sqrt{B_{22}} (\norm{y}_\L + \norm{\mu_\omega^\star}_{\rm TV}) \sum_{j=1}^{p^\star} \int_{X_j^{\rm near}(r)}\mathfrak{d}_\mathfrak{g}(x_j^\star,x)^2 \, \d\mu(x) + (\kappa + \norm{\mu_\omega^\star}_{\rm TV} + \norm{y}_\L) \mu(\X^{\rm far}(r)) \,, \notag \\
  \begin{split} \label{eq:upper_bound_Jpstar}
  & \lesssim_{d, \varepsilon_2, \norm{y}_\L, \norm{\mu_\omega^\star}_{\rm TV}} \sum_{j,\mu(\X_j^{\rm near}(r))>0} \left( \frac{1}{\kappa} \frac{1}{\mu(\X_j^{\rm near}(r))} \tilde b_{x_j}^T \tilde H_j \tilde b_{x_j}+ \int_{\X_j^{\rm near}(r)} \norm{\mathfrak{g}_x^{1/2}\dot \gamma_{x_j^\star \to x}(0)-\frac{\tilde b_{x_j}}{\mu(\X_j^{\rm near}(r))}}_2^2 \, \d\mu(x)\right) \\
  &\quad + \mu(\X^{\rm far}(r)) \,,
  \end{split}
\end{align}
where we used Equations \eqref{eq:control_int_fisher_rao_norm_b} and \eqref{eq:control_norm_b_bHb} to get the last inequality.
\\\underline{Control of $B$:}
We also have
 \begin{align}
     B&=\frac{1}{2}\innerprod{\Psi \mu}{\Psi(\mu-\mu_\omega^\star)}_\L - \frac{1}{2} \innerprod{\Psi \mu_\omega^\star}{\Psi(\mu-\mu_\omega^\star)}_\L \,, \notag \\
     &= \frac{1}{2} \sum_j \left( \int_{\X} \tilde b_{\omega_j} K_{\rm norm}(x_j^\star,x')\,\d(\mu-\mu_\omega^\star)(x')+ \int_{\X} \nabla_1 K_{\rm norm}(x_j^\star,x')^T \tilde b_{x_j} \,\d(\mu-\mu_\omega^\star)(x') \right) \notag \\
     &\quad + \underbrace{\frac{1}{2}\sum_j \int_{\X_j^{\rm near}(r)} \int_\X \int_0^1 (1-y) \dot\gamma_{x_j^\star \to x}(y)^T H_1^{\mathfrak{g}}K_{\rm norm}(\gamma_{x_j^\star \to x}(y),x') \dot \gamma_{x_j^\star \to x}(y) \, \d y \, \d(\mu-\mu_\omega^\star)(x') \, \d\mu(x)}_{E_1} \notag \\
     &\quad + \underbrace{\frac{1}{2}\int_{\X^{\rm far}(r)} \int_{\X} K_{\rm norm}(x,x') \, \d(\mu-\mu_\omega^\star)(x') \, \d\mu(x)}_{E_2}  \,, \notag \\
     &=  \frac{1}{2}\sum_j \tilde b_{\omega_j} \sum_i \left(\tilde b_{\omega_i} \tilde K(x_j^\star,x_i^\star) + \nabla_2 \tilde K(x_j^\star,x_i^\star)^T \tilde b_{x_i} \right) \notag \\
     &\quad + \frac{1}{2}\sum_j  \sum_i \left(\tilde b_{\omega_i} \nabla_1\tilde K(x_j^\star,x_i^\star)^T \tilde b_{x_j} +  \tilde b_{x_i}^T \nabla_2\nabla_1\tilde K(x_j^\star,x_i^\star)^T \tilde b_{x_j} \right) \notag \\
     &\quad + \underbrace{\frac{1}{2}\sum_j \sum_i\tilde b_{\omega_j} \int_{\X_i^{\rm near}(r)} \int_0^1 (1-y)\dot\gamma_{x_i^\star \to x}(y)^T H_2^{\mathfrak{g}}K_{\rm norm}(x_j^\star,\gamma_{x_i^\star \to x'}(y)) \dot \gamma_{x_i^\star \to x}(y) \, \d y \, \d \mu(x')}_{E_3} \notag \\
     & \quad + \underbrace{\frac{1}{2} \sum_j \int_{\X^{\rm far}(r)} \tilde b_{\omega_j} K_{\rm norm}(x_j^\star,x') \, \d\mu(x')}_{E_4} \notag \\
      &\quad + \underbrace{\frac{1}{2}\sum_j \sum_i \int_{\X_i^{\rm near}(r)} \int_0^1 (1-y) [\tilde b_{x_j}]K_{\rm norm}^{(12)}(x_j^\star,\gamma_{x_i^\star \to x'}(y))[\dot\gamma_{x_i^\star \to x}(y), \dot \gamma_{x_i^\star \to x}(y)] \, \d y \, \d \mu(x')}_{E_5} \notag \\
    &\quad + \underbrace{\frac{1}{2}\sum_j \int_{\X^{\rm far}(r)} \nabla_1 K_{\rm norm}(x_j^\star,x')^T \tilde b_{x_j} \,\d\mu(x')}_{E_6} \notag \\
     & \quad + E_1+E_2\,, \notag \\
     &= \frac{1}{2} \tilde b^T \tilde \Upsilon \tilde b  + E_1+E_2 +E_3+E_4+E_5+E_6 \label{eq:norm_psi_mu_psi_mu_star_explicit}\,,
 \end{align}
 where $\Tilde \Upsilon$ is defined by \eqref{eq:tilde_Upsilon}.
 Using controls from Lemma \ref{lemma:controls_kernel} and \eqref{eq:identification_error_terms}, we get 
 \begin{align}
     \sum_{k=1}^6 |E_i| & \leq \frac{1}{4} \sqrt{B_{22}} \norm{\Psi(\mu-\mu_\omega^\star)}_\L \sum_j \int_{\X_j^{\rm near}}  \mathfrak{d}_\mathfrak{g}(x_j^\star,x)^2 \, \d\mu(x) + \frac{1}{2}\mu(\X^{\rm far}(r)) \norm{\Psi(\mu-\mu_\omega^\star)}_\L \notag \\
     & \quad +  \frac{B_{02}}{4}\sum_j |\tilde b_{\omega_j}| \sum_i \int_{\X_i^{\rm near}(r)} \mathfrak{d}_\mathfrak{g}(x_i^\star,x')^2 \, \d\mu(x') + \frac{1}{2} \mu(\X^{\rm far}(r))\sum_j |\tilde b_{\omega_j}| \notag \\
     &\quad + \frac{B_{12}}{4} \sum_j \norm{\tilde b_{x_j}}_2 \sum_i \int_{\X_i^{\rm near}(r)} \mathfrak{d}_\mathfrak{g}(x_i^\star,x')^2 \, \d\mu(x') + \frac{B_{10}}{2}\mu(\X^{\rm far}(r))\sum_j \norm{\tilde b_{x_j}}_2 \,, \notag \\
     & \lesssim_{d,\varepsilon_2, \varepsilon_0'} \frac{1}{\kappa r^2}(J_W(\mu)-J_W(\mu_\omega^\star))^{3/2} + \frac{1}{\kappa r^2} (J_W(\mu)-J_W(\mu_\omega^\star))\left(\sum_j |\tilde b_{\omega_j}| + \norm{ \tilde b_{x_j}}_2 \right) \,, \notag\\
 & \lesssim_{d,\varepsilon_2, \varepsilon_0'} \frac{1}{\kappa r^2}(J_W(\mu)-J_W(\mu_\omega^\star))^{3/2}  +  \frac{1}{\kappa r^2} p^\star(J_W(\mu)-J_W(\mu_\omega^\star))\norm{\tilde b}_2 \label{eq:upper_bound_error_terms_norm_psi_mu_psi_mu_star}\,.
 \end{align}
As $\tilde \Upsilon \succeq C_{\tilde \Upsilon} I$, we have $\norm{\tilde b}_2^2 \leq \frac{1}{C_{\tilde \Upsilon}} \tilde b^T \tilde \Upsilon \tilde b$. So
 \begin{align}
     B&\lesssim_{d,\varepsilon_2, \varepsilon_0', p^\star} \tilde b^T \tilde \Upsilon \tilde b + \frac{1}{\kappa r^2}(J_W(\mu)-J_W(\mu_\omega^\star))^{3/2}  +  \frac{1}{\kappa^2 r^4} (J_W(\mu)-J_W(\mu_\omega^\star))^2+\norm{\tilde b}_2^2\,, \notag \\
    &\lesssim_{d,\varepsilon_2, \varepsilon_0', p^\star, C_{\tilde \Upsilon}} \tilde b^T \tilde \Upsilon \tilde b + \frac{1}{\kappa r^2}(J_W(\mu)-J_W(\mu_\omega^\star))^{3/2}  +  \frac{1}{\kappa^2 r^4} (J_W(\mu)-J_W(\mu_\omega^\star))^2\,. \label{eq:upper_bound_norm_psi_mu_psi_mu_star}
 \end{align}

\noindent
\underline{Conclusion for the upper bound:}
Assembling \eqref{eq:decomposition_optimality_gap}, \eqref{eq:upper_bound_Jpstar} and \eqref{eq:upper_bound_norm_psi_mu_psi_mu_star}, we get
\begin{equation}\label{eq:upper_bound_optimality_gap}
    \begin{aligned}
J_W(\mu)-J_W(\mu_\omega^\star)&\lesssim_{\norm{y}_\L, \norm{\mu_\omega^\star}_{\rm TV},d,\varepsilon_2, \varepsilon_0', p^\star, C_{\tilde \Upsilon}} \sum_{j, \mu(\X_j^{\rm near}(r))>0} \int_{\X_j^{\rm near}(r)} \norm{\mathfrak{g}_x^{1/2}\dot \gamma_{x_j^\star \to x}(0)-\frac{\tilde b_{x_j}}{\mu(\X_j^{\rm near}(r))}}_2^2 \, \d\mu(x) \\
&\quad +  \mu(\X^{\rm far}(r)) + \frac{1}{\kappa}
\tilde b^T \left(
\tilde \Upsilon + 
\begin{pmatrix}
0 & 0 \\
 0& \diag(\mathds{1}_{\mu(\X_j^{\rm near}(r))>0}\frac{1}{\mu(\X_j^{\rm near}(r))}\tilde H_j)_{j=1,\ldots,p^\star} 
\end{pmatrix}\right) \tilde b  \\
 &\quad + \frac{1}{\kappa r^2}(J_W(\mu)-J_W(\mu_\omega^\star))^{3/2}  +  \frac{1}{\kappa^2 r^4} (J_W(\mu)-J_W(\mu_\omega^\star))^2\,.
    \end{aligned}
\end{equation}
\underline{Bound on $\norm{\tilde b}_2$:} We revisit the bound on $J_W(\mu)-J_W(\mu_\omega^\star)$. Remark that $A\geq 0$, because $\eta^\star \leq 1$ (implying that $J_{\mu_\omega^\star} \geq 0$). 
Recall also that $\tilde b^T
\tilde \Upsilon \tilde b \geq C_{\tilde \Upsilon} \norm{\tilde b}_2^2$. Hence, using \eqref{eq:decomposition_optimality_gap}, \eqref{eq:norm_psi_mu_psi_mu_star_explicit} and \eqref{eq:upper_bound_error_terms_norm_psi_mu_psi_mu_star}, we get 
\begin{align*}
    J_W(\mu)-J_W(\mu_\omega^\star) &\geq \frac{1}{2}C_{\tilde \Upsilon}\norm{\tilde b}_2^2 - \sum_{k=1}^6 |E_i| \,, \\
    & \geq \frac{1}{2}C_{\tilde \Upsilon}\norm{\tilde b}_2^2 - c_{d,\varepsilon_2,\varepsilon_0',p^\star} \left(\frac{1}{\kappa r^2}(J_W(\mu)-J_W(\mu_\omega^\star))^{3/2}  +  \frac{1}{\kappa r^2} (J_W(\mu)-J_W(\mu_\omega^\star))\norm{\tilde b}_2\right) \,,
\end{align*}
with $c_a$ denoting a positive constant depending only on $a$.
It comes that 
\begin{align*}
    \frac{1}{2}C_{\tilde \Upsilon}\left(\norm{\tilde b}_2-\frac{c_{d,\varepsilon_2,\varepsilon_0',p^\star}}{C_{\tilde \Upsilon} \kappa r^2}(J_W(\mu)-J_W(\mu_\omega^\star)) \right)^2 &\leq J_W(\mu)-J_W(\mu_\omega^\star) + c_{d,\varepsilon_2,\varepsilon_0',p^\star} \frac{1}{\kappa r^2}(J_W(\mu)-J_W(\mu_\omega^\star))^{3/2} \\
    &\quad + \frac{1}{2}\frac{c_{d,\varepsilon_2,\varepsilon_0',p^\star}^2}{C_{\tilde \Upsilon} \kappa^2 r^4}(J_W(\mu)-J_W(\mu_\omega^\star))^2
\end{align*}

If $\mu$ is such that $J_W(\mu)-J_W(\mu_\omega^\star)\leq \kappa^2 r^4$, we get
\begin{equation}\label{eq:control_norm_tilde_b}
    \norm{\tilde b}_2^2 \lesssim_{d,\varepsilon_2,\varepsilon_0',p^\star,C_{\tilde \Upsilon}} J_W(\mu)-J_W(\mu_\omega^\star) \,.
\end{equation}

\subsection{Assembling the bounds}\label{section:assembling_lower_upper_bounds}
\underline{Choice of $J_0'$:} We recall Equations \eqref{eq:lower_bound_G_mu}, \eqref{eq:upper_bound_optimality_gap} and \eqref{eq:control_norm_tilde_b}. We deduce that there exists $0<J_0'' \leq 1$ small enough, depending on $C_{lower}'',d,\varepsilon_2,\varepsilon_0',p^\star,C_{\tilde \Upsilon}, \norm{y}_\L, \norm{\mu_\omega^\star}_{\rm TV}$, such that if $J_W(\mu)-J_W(\mu_\omega^\star) \leq J_0''\kappa^2 r^4$, then it holds that
\begin{equation*}
\begin{aligned}
G_\mu &\gtrsim_{\varepsilon_0', \varepsilon_2, C_{\tilde \Upsilon}, \omega_\star} \kappa^2r^4\mu(\X^{\rm far}(r))+ 
    \tilde b^T \left(
\tilde \Upsilon + 
\begin{pmatrix}
0 & 0 \\
 0& \diag(\mathds{1}_{\mu(\X_j^{\rm near}(r))>0}\frac{1}{\mu(\X_j^{\rm near}(r))}\tilde H_j)_{j=1,\ldots,p^\star} 
\end{pmatrix}\right)\tilde b \\
 & \quad +\kappa^2 \sum_{j, \mu(\X_j^{\rm near}(r))>0} \int_{\X_j^{\rm near}(r)}\norm{\mathfrak{g}_{x_j^\star}^{-1/2} \dot \gamma_{x_j^\star \to x}(0)- \frac{\tilde b_{x_j}}{\mu(\X_j^{\rm near}(r))}}_2^2 \, \d\mu(x) \\
& \quad -C_{lower}''' \frac{1}{\kappa^2 r^4}(J_W(\mu)-J_W(\mu_\omega^\star))^2
\end{aligned}
\end{equation*}
(where $C_{lower}'''$ depends on $d,B_{\rm TV},\norm{y}_\L,\norm{\mu_\omega^\star}_{\rm TV}, \varepsilon_2, \varepsilon_0',  C_{\tilde \Upsilon}, \omega_\star$) together with
\begin{equation*}
    \begin{aligned}
J_W(\mu)-J_W(\mu_\omega^\star)&\lesssim_{\norm{y}_\L, \norm{\mu_\omega^\star}_{\rm TV},d,\varepsilon_2, \varepsilon_0', p^\star, C_{\tilde \Upsilon}} \sum_{j, \mu(\X_j^{\rm near}(r))>0} \int_{\X_j^{\rm near}(r)} \norm{\mathfrak{g}_x^{1/2}\dot \gamma_{x_j^\star \to x}(0)-\frac{\tilde b_{x_j}}{\mu(\X_j^{\rm near}(r))}}_2^2 \, \d\mu(x) \\
&\quad +  \mu(\X^{\rm far}(r)) + \frac{1}{\kappa}
\tilde b^T \left(
\tilde \Upsilon + 
\begin{pmatrix}
0 & 0 \\
 0& \diag(\mathds{1}_{\mu(\X_j^{\rm near}(r))>0}\frac{1}{\mu(\X_j^{\rm near}(r))}\tilde H_j)_{j=1,\ldots,p^\star} 
\end{pmatrix}\right) \tilde b  \,.
    \end{aligned}
\end{equation*}
This implies the existence of $R_0''>0$ depending on $\norm{y}_\L, \norm{\mu_\omega^\star}_{\rm TV},d,\varepsilon_2, \varepsilon_0', p^\star, C_{\tilde \Upsilon}, \omega_\star$ such that
\[G_\mu \geq R_0'' \kappa^2 r^4 (J_W(\mu)-J_W(\mu_\omega^\star)) -c_{\varepsilon_2, \varepsilon_0', C_{\tilde \Upsilon}, \omega_\star} C_{lower}''' \frac{1}{\kappa^2 r^4}(J_W(\mu)-J_W(\mu_\omega^\star))^2 \,.  \]
Hence there exists $J_0'>0$ depending on $J_0''$, $C_{lower}'''$, $\varepsilon_0'$, $\varepsilon_2$, $C_{\tilde \Upsilon}$, $\omega_\star$, and $R_0'>0$ depending on $\norm{y}_\L$, $\norm{\mu_\omega^\star}_{\rm TV}$, $d$, $\varepsilon_2$, $\varepsilon_0'$, $p^\star$, $C_{\tilde \Upsilon}$, $\omega_\star$, such that
\[J_W(\mu)-J_W(\mu_\omega^\star) \leq J_0' \kappa^4 r^8 \quad \Longrightarrow \quad G_\mu \geq R_0' \kappa^2 r^4 (J_W(\mu) -J_W(\mu_\omega^\star) )\,.\]

\noindent
\underline{Tracking dependence on parameters:}
We recall the definition of the constant factors, and the choice of $r$ made in Appendix \ref{section:lower_bound_objective_decrease}: $r=\min\{C_r \kappa, r_{\max}'\}$ with $C_r>0$ depending on $d$, $\norm{y}_\L$, $\norm{\mu_\omega^\star}_{\rm TV}$, $\varepsilon_2$, $\varepsilon_0'$, $C_{\tilde \Upsilon}$, $\omega_\star$, $\X$, $\norm{\mu_\omega^1}_{\rm TV}$, and $r_{\max}'$ depending on $d$, $\X$, $\tilde \X$, $\tau$, $\mu_\omega^\star$, $\eta^\star$. The constant $J_0'>0$ depends on $p^\star$, $d$, $\X$, $\norm{\mu_\omega^1}_{\rm TV}$, $\norm{y}_\L$, $\norm{\mu_\omega^\star}_{\rm TV}$, $\varepsilon_2$, $\varepsilon_0'$, $C_{\tilde \Upsilon}$, $\omega_\star$.
We denote $R_0\coloneq R_0' \min\{r_{\max}',C_r\}^4$. This constant depends on $\X$, $\tilde \X$, $\tau$, $\mu_\omega^\star$, $\eta^\star$, $\norm{y}_\L$, $\norm{\mu_\omega^1}_{\rm TV}$.
(Remark that $\eta^\star$, $\mu_\omega^\star$ contain the dependence on $C_{\tilde \Upsilon}$, $\varepsilon_0'$, $\varepsilon_2$, $\omega_\star$, $p^\star$, $\norm{\mu_\omega^\star}_{\rm TV}$, and that $\X$ contains the dependence on $d$.)
We also define $J_0 \coloneq J_0' \min\{r_{\max}',C_r\}^8$. It also depends on $\X$, $\tilde \X$, $\tau$, $\mu_\omega^\star$, $\eta^\star$, $\norm{y}_\L$, $\norm{\mu_\omega^1}_{\rm TV}$.

If $\mu$ satisfies $J_W(\mu) - J_W(\mu_\omega^\star) \leq J_0 \kappa^{12}$, then 
\begin{equation} \label{eq:bound_G_mu_with_optimality_gap}
 G_\mu \geq R_0 \kappa^6 (J_W(\mu) -J_W(\mu_\omega^\star) )\,.   
\end{equation}

\subsection{Exponential convergence}\label{section:proof_exponential_cv_from_upper_lower_bounds}
If $\mu_\omega^1$ verifies 
\begin{equation}\label{eq:condition_initialisation}
J_W(\mu_\omega^1)-J_W(\mu_\omega^\star)\leq J_0 \kappa^{12}\,,
\end{equation}
we retrieve the result of \citep[Corollary 3.5]{cpgd_chizat}.
First note that $\norm{\mu_\omega^k}_{\rm TV} \leq B_{\rm TV}$ for all $k\geq 1$ (Lemma \ref{lemma:control_tv_iterates}).

Then, as $k \in \mathbb{N}^* \mapsto J_W(\mu_\omega^k)$ is decreasing, it also holds that $J_W(\mu_\omega^k)-J_W(\mu_\omega^\star) \leq J_0 \kappa^{12}$ and we get \[J_W(\mu_\omega^{k+1}) - J_W(\mu_\omega^{k}) \leq -\frac{1}{2} \min\{\alpha,\beta\} G_{\mu_\omega^k} \leq -\frac{1}{2} \min\{\alpha,\beta\} R_0 \kappa^6 (J_W(\mu_\omega^k)- J_W(\mu_\omega^\star)) \,,\]
where we used \eqref{eq:bound_G_mu_with_optimality_gap}. Note that the contraction factor is automatically admissible: writing $c\coloneq \frac{1}{2}\min\{\alpha,\beta\}R_0\kappa^6$ and $g_k \coloneq J_W(\mu_\omega^k)-J_W(\mu_\omega^\star)$, the above reads $g_{k+1}\leq (1-c)g_k$, while $g_{k+1}\geq 0$ since $\mu_\omega^\star$ minimizes $J_W$ over $\M(\X)^+$. Hence $c\leq 1$ as soon as $g_k>0$, so that $1-c\in[0,1)$ and the geometric bound below is never vacuous.
So for $k\geq 1$, 
\begin{equation}\label{eq:rate_convergence}
J_W(\mu_\omega^k)- J_W(\mu_\omega^\star) \leq \left(1-\frac{1}{2} \min\{\alpha,\beta\} R_0 \kappa^6 \right)^{k-1} (J_W(\mu_\omega^1)- J_W(\mu_\omega^\star))\,.
\end{equation}

\begin{remark}[Refining the dependence on $\kappa$]\label{remark:dependence_kappa_exp_convergence}
To obtain the lower bound for $G_\mu$, we used several times the inequality $(a+b)^2 \geq \frac{1}{2}a^2 - b^2$. A parameterized version of this inequality holds: for $\delta\in (0,1)$, $(a+b)^2 \geq (1 - \delta)a^2 - \frac{1-\delta}{\delta} b^2$. By optimizing $\delta$ where we use this inequality, we can improve both the condition on the initialization and the convergence rate. Specifically, this allows one to reduce the exponent associated with $\kappa$ in Equations \eqref{eq:condition_initialisation} and \eqref{eq:rate_convergence}.
\\We do not present this approach here in order to keep the exposition simple, since it would require choosing $\delta$ based on results given later in the proof.
\end{remark}

\section{Proof of Theorem \ref{th:check_assumptions_solution}}\label{section:proof_checking_non_degeneracy_solution}

This proof relies on calculations established in a previous work \citep{article_giard_decastro_marteau}. More precisely, we import results from the following parts: Lemma K.1, Theorem 5.1, Theorem 6.1, Proposition K.1 and its proof, Lemma K.4, Proposition I.1 and its proof.

\paragraph{Non degeneracy of the statistical target}
We suppose that $y=\Psi \mu_\omega^0 + b$, where $b \in \L$ and $\mu_\omega^0 \coloneq \sum_{j=1}^s \omega_j^0 \delta_{x_j^0}$, with $\omega_j^0>0$ and $x_j^0 \in \mathring{\tilde \X}$, and $s\geq 2$.

We then use \citep[Lemma K.1]{article_giard_decastro_marteau}. If the minimal distance between the particles of $\mu_\omega^0$ is sufficiently large, \ie if $\min_{i \neq j}\mathpzc{d}(x_j^0,x_i^0) \geq \Delta_\tau$ where $\Delta_\tau$ depends on $d,u_{\min},u_{\max},\tau,s$ (see \citep[Theorem 5.1]{article_giard_decastro_marteau} for the precise value of $\Delta_\tau$), then $\mu_\omega^0$ satisfies the NDSC. 

We can apply \citep[Theorem 6.1]{article_giard_decastro_marteau}: there exists $\kappa_0>0$ (depending on $\X,\tau,\mu_\omega^0$), $\gamma_0>0$ (depending on $d$) such that for all $0<\kappa \leq \kappa_0$ and if $\norm{b}_\L \leq \gamma_0 \kappa$, then the solution $\mu_\omega^\star$ is unique and $s$-sparse: $\mu_\omega^\star=\sum_{j=1}^s \omega_j^\star \delta_{x_j^\star}$ with $\omega_j^\star>0$ (\ie $p^\star=s$). 

\paragraph{Non degeneracy of the solution}
Under these assumptions, the proof of \citep[Proposition K.1]{article_giard_decastro_marteau} shows that $\eta^\star$ verifies items 1,2,3,4 of Assumption \ref{assumption:non_degenerate_certif_solution} with $\varepsilon_2= 0.06158$.

\paragraph{Control of the solution}
We have $\omega_\star=\min_j \omega_j^\star>0$ (\cf \citep[Lemma K.4]{article_giard_decastro_marteau}). This minimum weight depends a priori on the problem \eqref{eq:pb_BLASSO_gaussian_kernel_norm}.
We can obtain finer results with \citep[Theorem 6.1]{article_giard_decastro_marteau}
It comes that for $1 \leq j \leq s$, \[ |\omega_j^0 - \omega_j^\star| \lesssim_{\X,\mu^0,\tau} \kappa \quad \text{and} \quad \mathpzc{d}(x_j^0,x_j^\star)\lesssim_{\X,\mu^0,\tau} \kappa \,. \] 
If $\kappa$ is chosen sufficiently small, and if the noise is small \wrt $\kappa$, we have $\omega^\star=\min_j \omega_j^\star$ close to $\min_j\omega_j^0$,and $x_j^\star$ close to $x_j^0$ and in particular, $x_j^\star \in \mathring{\tilde \X}$. This provides a condition ensuring that the statement regarding the location of the solution in Assumption \ref{assumption:non_degenerate_certif_solution} is satisfied. More explicitly, there exists $C_\kappa'$ depending on $\X,\tilde \X, \tau,\mu_\omega^0$ such that if $\kappa \leq \min\{\kappa_0,C_\kappa'\}$ and $\norm{b}_\L \leq \gamma_0 \kappa$, then $\min_j \omega_j^\star \geq \frac{1}{2}\min_j \omega_j^0$, and for all $j\in \{1,\ldots,s\}$, $x_j^\star \in \mathring{\tilde \X}$.

\paragraph{Positive definiteness of $\tilde \Upsilon$}
We know that 
\[\tilde \Upsilon_0 \coloneq \begin{pmatrix}
(K_{\rm norm}(x_j^0,x_i^0))_{j,i=1,\ldots,s} & (\nabla_2 K_{\rm norm}(x_j^0,x_i^0)^T \mathfrak{g}_{x_i^0}^{-1/2})_{j,i=1,\ldots,s} \\
(\mathfrak{g}_{x_j^0}^{-1/2}\nabla_1 K_{\rm norm}(x_j^0,x_i^0))_{j,i=1,\ldots,s} & (\mathfrak{g}_{x_j^0}^{-1/2}\nabla_1 \nabla_2 K_{\rm norm}(x_j^0,x_i^0) \mathfrak{g}_{x_i^0}^{-1/2})_{j,i=1,\ldots,s}
\end{pmatrix}\] is positive definite, from the separation assumptions made to obtain the NDSC. We refer to the proof of \citep[Proposition I.1]{article_giard_decastro_marteau}: $\tilde \Upsilon_0\succeq \frac{1}{2} I$.

As the minimal eigenvalue of $\tilde \Upsilon_0$ is continuous in the parameters $\{x_j^0\}_{j=1}^s$, and as $\tilde \Upsilon$ is the matrix built from the same expression at the points $\{x_j^\star\}_{j=1}^s$, it suffices for each $x_j^\star$ to be sufficiently close to $x_j^0$ for the minimal eigenvalue of $\tilde \Upsilon$ to remain above $\frac{1}{4}$.
In particular, there exists $0<C_\kappa\leq \min\{C_\kappa',\kappa_0\}$ depending on $\X,\tilde \X,\tau,\mu_\omega^0$ such that if $0<\kappa \leq C_\kappa$ and $\norm{b}_\L \leq \gamma_0 \kappa$, then $\tilde \Upsilon \succeq \frac{1}{4} I$.

\section{Proof of the bound on the optimality gap}\label{section:proof_optimality_gap}
We prove the result stated by \eqref{eq:bound_optimality_gap} in the Discussion section (Section \ref{section:discussion}).

Let $\mu_\omega^\star$ be the minimizer of $J_W$ on $\M(\X)^+$. Let $\mu \in \M(\X)^+$. We want to give an upper bound on $J_W(\mu)-J_W(\mu_\omega^\star)$ that does not depend on $\mu_\omega^\star$, but that is still more informative than the value of $J_W$ at point $\mu$.

Let $\alpha>0$ such that $\inf_{x \in \X} \alpha(J_\mu'(x) - \kappa) \geq - \kappa$. Let $h=\alpha(\Psi \mu-y)$. As $\mu_\omega^\star$ is nonnegative, it holds that 
\begin{equation}\label{eq:property_h_optim_gap}
- \innerprod{h}{\Psi \mu_\omega^\star} - \kappa \norm{\mu_\omega^\star}_{\rm TV} \leq 0 \,.
\end{equation}
Furthermore, as $\mu_\omega^\star$ is a minimizer, the KKT conditions hold and in particular $\int_{\X} J_{\mu_\omega^\star}' \, \d\mu_\omega^\star=0$, which can be rewritten as \[\innerprod{\Psi \mu_\omega^\star}{\Psi\mu_\omega^\star - y}_\L + \kappa \norm{\mu_\omega^\star}_{\rm TV}=0 \,.\]
Using \eqref{eq:property_h_optim_gap}, it follows that $\innerprod{\Psi \mu_\omega^\star}{\Psi\mu_\omega^\star - y}_\L \leq \innerprod{h}{\Psi \mu_\omega^\star}$, implying that $\norm{\Psi \mu_\omega^\star}_\L^2 \leq \norm{h+y}_\L^2$.
Hence
\begin{align*}
    J_W(\mu)-J_W(\mu_\omega^\star) &= J_W(\mu) + \frac{1}{2}\norm{\Psi \mu_\omega^\star}_\L^2 - \frac{1}{2}\norm{y}_\L^2 - \innerprod{\Psi \mu_\omega^\star}{\Psi\mu_\omega^\star - y}_\L - \kappa \norm{\mu_\omega^\star}_{\rm TV} \,, \\
    &= J_W(\mu) + \frac{1}{2}\norm{\Psi \mu_\omega^\star}_\L^2  - \frac{1}{2}\norm{y}_\L^2\,, \\
    &\leq J_W(\mu) + \frac{1}{2}\norm{h+y}_\L^2  - \frac{1}{2}\norm{y}_\L^2 \,, \\
    &= J_W(\mu)+\frac{1}{2}\norm{h}_\L^2 + \innerprod{h}{y}_\L \,.
\end{align*}

Since \[J_\mu'(x)=\innerprod{\Psi \mu-y}{\Psi \delta_x}_\L + \kappa \geq - \norm{\Psi \mu-y}_\L + \kappa \quad \forall \, x \in \X \,,\] the condition on $\alpha$ is always satisfied by choosing $\alpha= \frac{\kappa}{\max\{ \norm{\Psi \mu - y}_\L \,, \, \kappa \}}$.

\end{document}